\newcommand{\nc}{\newcommand}
\newcommand{\cC}{\mathtt C}
\newcommand{\cD}{\mathtt D}
\newcommand{\uz}{{\underline{z}}}
\newcommand{\ul}{{\underline{\lambda}}}
\newcommand{\Hom}{\operatorname{Hom}}
\newcommand{\C}{\mathbb{C}}
\newcommand{\R}{\mathbb R}
\newcommand{\PP}{\mathbb P}
\newcommand{\cE}{\mathcal E}
\newcommand{\wt}{\widetilde}
\newcommand{\Cx}{\C^\times}
\newcommand{\Rx}{\R^\times}
\nc{\vareps}{\varepsilon}
\nc{\arrg}{\operatorname{arg}}
\nc{\Cf}{\mathcal{T}}
\nc{\circQ}{Q^\circ}
\nc{\circCQ}{\CQ^\circ}
\nc{\circcircQ}{Q^{\circ \circ}}
\nc{\circcircCQ}{\CQ^{\circ \circ}}
\nc{\circY}{\ft}
\nc{\Ycirc}{\overline \ft^\circ}
\nc{\Cfcirc}{\overline \Cf^\circ}
\nc{\circCY}{\Cf}
\nc{\circcircY}{\ft^{\circ}}
\nc{\circcircCY}{\Cf^{\circ}}
\nc{\bU}{\tilde{U}}
\nc{\bCU}{\tilde{\CU}}
\nc{\bCV}{\tilde{\CV}}
\nc{\bbV}{V}
\nc{\bbU}{U}
\nc{\bbCU}{{\CU}}
\nc{\bbCV}{{\CV}}
\nc{\circcircM}{M^{\circ \circ}}
\nc{\Gr}{\mathbb{G}}
\nc{\A}{{\mathcal A}}
\nc{\ol}{\overline}
\nc\tboxtimes{\wt{\boxtimes}}
\nc{\alp}{\alpha}
\nc{\Wh}{\operatorname{Wh}}
\nc{\IC}{{\mathcal{IC}}}
\nc{\Uhg}{{U_\hbar \fg}}
\nc{\tW}{{\tilde{\Gr}}}
\nc{\bM}{\mathbf M}
\nc{\la}{\lambda}
\nc{\tM}{\widetilde{M}}
\nc{\tCM}{\widetilde{\mathcal M}}
\nc{\Fr}[1]{\overline F_{#1}(\BR)}
\nc{\Mr}[1]{\overline M_{#1}(\BR)}
\nc{\Ms}[1]{\overline M_{#1}^{split}}
\renewcommand{\Mc}[1]{\overline M_{#1}^{comp}}
\nc{\Fs}[1]{\overline \CF_{#1}^{split}}
\nc{\Fc}[1]{\overline \CF_{#1}^{comp}}
\renewcommand{\l}{\lambda}
\nc{\BA}{{\mathbb{A}}} \nc{\BC}{{\mathbb{C}}}
\nc{\BQ}{{\mathbb{Q}}}
\nc{\BL}{{\mathbb{L}}} \nc{\BN}{{\mathbb{N}}}
\nc{\BP}{{\mathbb{P}}} \nc{\BR}{{\mathbb{R}}}
\nc{\BZ}{{\mathbb{Z}}} 
\nc{\BG}{{\mathbb{G}}}
\nc{\CA}{{\mathcal{A}}} \nc{\CB}{{\mathcal{B}}}
\nc{\CD}{{\mathcal{D}}}
\nc{\CE}{{\mathcal{E}}} \nc{\CF}{{\mathcal{F}}}
\nc{\CG}{{\mathcal{G}}} \nc{\CH}{{\mathcal{H}}}
\nc{\CI}{{\mathcal{I}}}  \nc{\CJ}{{\mathcal{J}}}
\nc{\CL}{{\mathcal{L}}}
\nc{\CM}{{\mathcal{M}}} \nc{\CN}{{\mathcal{N}}}
\nc{\CO}{{\mathcal{O}}} \nc{\CP}{{\mathcal{P}}}
\nc{\CQ}{{\mathcal{Q}}} \nc{\CR}{{\mathcal{R}}}
\nc{\CS}{{\mathcal{S}}} \nc{\CT}{{\mathcal{T}}}
\nc{\CU}{{\mathcal{U}}} \nc{\CV}{{\mathcal{V}}}
\nc{\CX}{{\mathcal{X}}}
\nc{\CW}{{\mathcal{W}}} \nc{\CZ}{{\mathcal{Z}}}
\nc{\ff}{{\mathfrak{f}}} \nc{\fv}{{\mathfrak{v}}}
\nc{\fa}{{\mathfrak{a}}} \nc{\fb}{{\mathfrak{b}}}
\nc{\fd}{{\mathfrak{d}}} \nc{\fe}{{\mathfrak{e}}}
\nc{\fg}{{\mathfrak{g}}} \nc{\fgl}{{\mathfrak{gl}}}
\nc{\fh}{{\mathfrak{h}}} \nc{\fri}{{\mathfrak{i}}}
\nc{\fj}{{\mathfrak{j}}} \nc{\fk}{{\mathfrak{k}}}
\nc{\fm}{{\mathfrak{m}}} \nc{\fn}{{\mathfrak{n}}}
\nc{\ft}{{\mathfrak{t}}} \nc{\fu}{{\mathfrak{u}}}
\nc{\fw}{{\mathfrak{w}}} \nc{\fz}{{\mathfrak{z}}}
\nc{\fl}{{\mathfrak{l}}}
\nc{\fp}{{\mathfrak{p}}} \nc{\frr}{{\mathfrak{r}}}
\nc{\fs}{{\mathfrak{s}}} \nc{\fsl}{{\mathfrak{sl}}}
\nc{\hsl}{{\widehat{\mathfrak{sl}}}}
\nc{\hgl}{{\widehat{\mathfrak{gl}}}}
\nc{\hg}{{\widehat{\mathfrak{g}}}}
\nc{\chg}{{\widehat{\mathfrak{g}}}{}^\vee}
\nc{\hn}{{\widehat{\mathfrak{n}}}}
\nc{\chn}{{\widehat{\mathfrak{n}}}{}^\vee}
\nc{\Xiset}{\Xi\text{-}\mathtt{Set}}
\nc{\Lset}{\Lambda_+\text{-}\mathtt{Set}}
\nc{\set}{\mathtt{Set}}
\nc{\XiCCob}{\Xi\text{-}\mathtt{CCCat}}
\nc{\XiCov}{\Xi\text{-}\mathtt{Cov}}
\nc{\gcrys}{\fg\text{-}\mathtt{Crys}}
\nc{\sqcu}{\bigsqcup\limits}
\newtheorem{cor}{Corollary}[section]
\newtheorem{lem}[cor]{Lemma}
\newtheorem{prop}[cor]{Proposition}
\newtheorem{conj}[cor]{Conjecture}
\newtheorem{thm}[cor]{Theorem}
\theoremstyle{definition}
\newtheorem{defn}[cor]{Definition}
\newtheorem{rem}[cor]{Remark}
\theoremstyle{remark}
\begin{document}

\title{Cactus flower spaces and monodromy of Bethe vectors}
\author{Joel Kamnitzer and Leonid Rybnikov}

\begin{abstract}
    We continue the study of cactus flower moduli spaces $\overline{F}_n$ and Gaudin models started in \cite{IKLPR,IKR}. We show that isomorphism classes of operadic coverings of the real form $\overline{F}_n(\mathbb{R})$ are naturally one-to-one with equivalence classes of concrete coboundary monoidal categories (i.e. coboundary monoidal categories that admit a faithful monoidal functor to sets) with certain semisimplicity and finiteness conditions. Following the strategy of \cite{HKRW}, for any complex semisimple Lie algebra $\fg$, we recover Kashiwara $\mathfrak{g}$-crystals, as a concrete coboundary category, from the coverings given by Bethe eigenlines for inhomogeneous Gaudin models. Using this, we compute the monodromy of Bethe eigenlines for trigonometric Gaudin models over two different real loci. In the particular case of minuscule highest weights, this can be regarded as combinatorial version of the wall-crossing conjecture of Bezrukavnikov and Okounkov for quantum cohomology of symplectic resolutions in the case of minuscule resolutions of slices in the affine Grassmannian.
\end{abstract}

\maketitle

\section{Introduction}

\subsection{Cactus flower spaces} Let $\overline{M}_{n+1}$ denote the Deligne-Mumford space of stable rational curves with $n+1$ marked points. The points of $\overline{M}_{n+1}$ are isomorphism classes of curves of genus $0$, with $n+1$ ordered marked points and possibly with simple nodes, such that each component has at least $3$ distinguished points (either marked points or nodes). Since one can always assume that the $(n+1)$-th marked point has the coordinate $\infty$, this space can be regarded as a compactification of configurations of $n$ distinct points on the complex affine line, modulo simultaneous translations and dilations. Informally, the topology of $\overline{M}_{n+1}$ is determined by the following rule: when some of the distinguished points (marked or nodes) from the same component collide, they bubble off into a new component, and this procedure can be iterated while there remain components with at least $4$ distinguished points. 

In \cite{IKLPR}, along with Ilin, Li, and Przytycki, we introduced and studied the \emph{cactus flower space} $\overline{F}_n$, i.e. the moduli space of \emph{cactus flower curves}, which compactifies configurations of $n$ distinct points on the complex affine line, modulo translations (but not dilations). This makes $\infty$ an exceptional marked point, so its collision with other marked points behaves differently from the collision of usual marked points with each other. Namely, any group of points approaching infinity while keeping the mutual distances bounded forms a new component attached to the original one at the infinity point -- thus forming a bouquet-like structure at $\infty$. It can be regarded as an additive analog of the Deligne--Mumford compactification $\overline{M}_{n+2}$ (that is naturally a compactification of the space of configurations of $n$ points on $\mathbb{C}^\times$ modulo dilations), and, moreover, there is a one-parameter family that deforms $\overline{F}_n$ to $\overline{M}_{n+2}$: there is a variety $\overline{\mathcal{F}}_n$ over $\mathbb{A}^1$ such that its fiber over $\varepsilon\ne0\in\mathbb{A}^1$ is $\overline{M}_{n+2}$ and the special fiber at $0\in\mathbb{A}^1$ is $\overline{F}_n$. 

The real form $\overline{M}_{n+1}(\mathbb{R})$ is a combinatorial space glued from associahedrons (cf. \cite{Kap}).  Dually, Davis, Januszkiewicz, and Scott \cite{DJS} presented it as a cube complex, describing it as a $K(\pi,1)$ space for the \emph{cactus group} $C_n$. Similarly, the real locus of the cactus flower space, $ \overline F_n(\BR) $, is a combinatorial space that admits a nice presentation as a cube complex, see \cite{IKLPR}. Moreover, the deformation space $\overline{\mathcal{F}}_n$ has two particularly interesting real forms, both deforming the real form $\overline{F}_n(\mathbb{R})$ over the real affine line. First, \emph{the split real form} $\Fs{n}$ whose fiber at $\varepsilon\ne0$ is the usual real form $\Ms{n+2}=\overline{M}_{n+2}(\mathbb{R})$, the closure of the configurations of real points on $\mathbb{R}^\times$. Second, the \emph{compact real form} $\Fc{n}$ whose fiber at $\varepsilon\ne0$ is $\Mc{n+2}$, the closure of configurations of $n+2$ points on the circle $S^1\subset \mathbb{C}^\times$. Moreover, by \cite[Theorems 9.23, 9.24]{IKLPR}, $\overline F_n(\BR) $ is a deformation retract of both $ \Fc{n} $ and  $ \Fs{n} $.  


\subsection{The cactus group and its variants} For any $1\le i<j\le n$, let $w_{ij}$ be the involution in the symmetric group $S_n$ reversing the interval $[i,j]$. The \emph{cactus group} $ C_n $ is defined to be the group with generators $ s_{ij} $ for $ 1 \le i< j \le n $ and relations
	 \begin{enumerate}
	\item $ s_{ij}^2 = 1 $
	\item $ s_{ij} s_{kl} = s_{kl} s_{ij} $ if $ [i,j] \cap [k,l] = \emptyset $
	\item $ s_{ij} s_{kl} = s_{ w_{ij}(l)  w_{ij}(k)} s_{ij} $ if $ [k,l] \subset [i,j] $
\end{enumerate}
We have an obvious epimorphism $C_n\to S_n$ taking $s_{ij}$ to $w_{ij}$. From \cite{DJS}, we have an isomorphism $ \pi_1^{S_n}(\overline M_{n+1}(\BR)) \cong C_n $ such that the above projection $C_n\to S_n$ is the natural homomorphism $ \pi_1^{S_n}(\overline M_{n+1}(\BR))\to \pi_1^{S_n}(pt)$.

In this article, we consider three variants of the cactus group which are the equivariant fundamental groups of the spaces $ \overline F_n(\BR), \Ms{n+2}, \Mc{n+2}$.

The \emph{virtual cactus group} $vC_n$ is generated by a copy of the cactus group $ C_n$ and the symmetric group $S_n $ subject to the relations
$$ w s_{ij} = s_{w(i) w(j)} w, \text{ if $ w \in S_n $ and $ w(i+k) = w(i) + k $ for $ k = 1, \dots, j -i $}$$
According to \cite{IKLPR}, we have $ \pi_1^{S_n}(\overline F_n(\BR)) \cong vC_n $.  

The \emph{mirabolic cactus group} $MC_n$ is the preimage of $ S_n\subset S_{n+1}$ in $ C_{n+1}$. Moreover, as $ \pi_1^{S_{n+1}}(\Ms{n+2}) \cong C_{n+1}$, we have $ \pi_1^{S_n}(\Ms{n+2}) \cong MC_n$.  In \cite{GHR}, it is proven that $ MC_n$ is generated by certain elements $ s_{ij} $ for $ 1 \le i < j \le n $ and $ t_i $ for $ i = 1, \dots, n-1 $.

The \emph{affine cactus group} $ AC_n$ has generators $ s_{ij} $ for $ 1 \le i \ne j \le n $ (corresponding to intervals in the cyclic order on $ \BZ/n $). The \emph{extended affine cactus group} $ \widetilde{AC}_n $ is the semidirect product with $ \BZ/n $ whose generator $c$ satisfies $cs_{ij}c^{-1}=s_{i+1,j+1}$. Clearly,  $ \widetilde{AC}_n $ is generated by $c$ and $s_{ij}$ for $ 1 \le i < j \le n $. According to \cite{IKLPR}, we have an isomorphism $ \pi^{S_n}_1( \Mc{n+2} ) \cong \widetilde{AC}_n $.

We have the following statement relating together all the above groups; the first statement is from \cite{IKLPR} (see Theorem \ref{th:AC} below) and the second is proven here as Theorem \ref{th:pi1split}.
\begin{thm}\begin{enumerate}
\item \cite{IKLPR} The homomorphism $$ \widetilde{AC}_n \cong \pi^{S_n}_1( \Mc{n+2} ) \rightarrow \pi^{S_n}_1( \Fc{n}) \cong \pi^{S_n}_1(\overline F_n(\BR)) \cong vC_n $$ given by the retraction takes $s_{ij}\in \widetilde{AC}_n$ with $ 1 \le i < j \le n $ to $s_{ij}\in vC_n$ and $c\in\widetilde{AC}_n$ to the long cycle $c\in S_n\subset vC_n$.
\item The homomorphism $$ MC_n \cong \pi^{S_n}_1( \Ms{n+2} ) \rightarrow \pi^{S_n}_1( \Fs{n}) \cong \pi^{S_n}_1(\overline F_n(\BR)) \cong vC_n $$ given by the retraction takes $s_{ij}\in MC_n$ to $s_{ij}\in vC_n$ and $t_i\in MC_n$ to $w_{i,i+1}\in S_n\subset vC_n$.
    \end{enumerate}
\end{thm}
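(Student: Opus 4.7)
The plan for proving part (2) --- part (1) being cited from \cite{IKLPR} --- is to use the deformation $\Fs{n}\to\BA^1(\BR)$ and its $S_n$-equivariant retraction onto the central fiber $\overline F_n(\BR)$ from \cite[Theorem 9.24]{IKLPR} as a geometric bridge. I would choose $S_n$-invariant basepoints compatibly: a real configuration $0<x_1<\dots<x_n$ in the generic fiber $\Ms{n+2}$, with the $(n+1)$-th and $(n+2)$-th marked points at $0$ and $\infty$, whose image under the retraction is the corresponding configuration in $\overline F_n(\BR)$. Since the retraction is an $S_n$-equivariant homotopy equivalence, the homomorphism $MC_n\to vC_n$ in the theorem is identified with the $\pi_1^{S_n}$-map induced by the inclusion $\iota:\Ms{n+2}\hookrightarrow\Fs{n}$ of the generic fiber.

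For the generators $s_{ij}$ with $1\le i<j\le n$, I would represent $s_{ij}$ by the standard cactus loop in $\Ms{n+2}$: a one-parameter family of configurations in which $x_i,\dots,x_j$ slide together toward a common value in $\BR_{>0}$, bounded away from both $0$ and $\infty$, bubble off as a new component, and return with reversed order. Since the loop is supported in a compact locus disjoint from the boundary strata of $\Ms{n+2}$ involving the marked points at $0,\infty$, a neighborhood of its support inside $\Fs{n}$ is, up to homotopy, a product with the deformation parameter $\varepsilon$. Consequently the retraction carries the loop to the analogous cactus loop in $\overline F_n(\BR)$, which is by definition $s_{ij}\in vC_n$.

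The generators $t_i$ require more care. Using the presentation from \cite{GHR}, I would represent each $t_i$ as an explicit loop in $\Ms{n+2}$ that projects to the transposition $(i,i+1)\in S_n$ and necessarily engages the mirabolic boundary: concretely, a path in which $x_i$ and $x_{i+1}$ swap positions by simultaneously collapsing onto the marked point at $0$ (equivalently, escaping to $\infty$ via the dilation action), bubbling off onto a new component, and returning with their labels interchanged. Under the retraction to $\overline F_n(\BR)$, the dilation is trivialized and there is no distinguished marked point at $0$ to collide with, so this mirabolic excursion becomes null-homotopic; the resulting loop in $\overline F_n(\BR)$ is a pure swap of $x_i$ with $x_{i+1}$, whose class is $w_{i,i+1}\in S_n\subset vC_n$.

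The main obstacle is making the geometric description of $t_i$ and its deformation through $\Fs{n}$ rigorous enough to pin down the retracted class --- in particular, to verify that the cactus content of the image is trivial and only the permutation content survives. I expect the cleanest route is through the cube-complex models of both real forms developed in \cite{DJS,IKLPR}, in which the retraction admits an explicit combinatorial description and the generators $s_{ij},t_i$ are realized as specific edge loops; matching these with the \cite{GHR} presentation then reduces the theorem to a finite combinatorial check. Since the map in question is already known to be a group homomorphism, only the images of generators need be tracked --- no relations are at issue.
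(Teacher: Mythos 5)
Your overall framing is the same as the paper's: part (1) is quoted from \cite{IKLPR}, and part (2) reduces, via the retraction of \cite[Thm 9.24]{IKLPR}, to computing the map $\pi_1^{S_n}(\Ms{n+2},\tilde y)\to\pi_1^{S_n}(\Fs{n},\infty)\cong vC_n$ induced by including the generic fibre, and one only needs to track generators. For the $s_{ij}$ your argument is essentially the paper's in rougher form: the paper makes the ``product with $\varepsilon$'' intuition precise by writing down the explicit two-parameter family $H(t,s)=(t,2t,\dots,nt;s)$ in $\Fs{n}$, which identifies the composite $C_n\to MC_n\to vC_n$ with the map $\widehat p_y^\infty$ already computed in Theorem \ref{th:pi12}(2) (itself a cube-complex argument), together with Proposition \ref{th:CntCn}. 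The basepoint-and-connecting-path bookkeeping you gloss over (the identification of $\pi_1^{S_n}(\Fs{n})$ with $vC_n$ depends on the chosen path $p^\infty_{\tilde y}$) is exactly what this homotopy handles, but your route for these generators is viable.

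The genuine gap is in the treatment of the $t_i$. First, your geometric representative (``$x_i,x_{i+1}$ collapse onto the marked point $0$, bubble off, and return interchanged'') is not the element $t_i$ as it is actually defined (following \cite{GHR} and Section \ref{se:Splitpi1}): $t_i=\widehat p^c_y((w_i,p_i))$, where $c$ is the caterpillar point and $p_i$ is the explicit path with $\delta_{i\,i+1}=\tfrac1{1-t}$, $\delta_{i+1\,i}=\tfrac1t$; you would have to prove your loop represents the same class, which is itself nontrivial. Second, the assertion that ``the mirabolic excursion becomes null-homotopic'' because ``there is no distinguished marked point at $0$'' is not an argument: inside your bubble the two points must still trade places, and on the real locus this forces the path to cross some boundary wall, so it is not clear a priori that no cactus content ($s_{i\,i+1}$-type contribution) survives after retraction. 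The paper's key observation, which is missing from your sketch, is that the entire $t_i$-loop (the paths $p_y^c$, $p_i$, and $p^\infty_{\tilde y}$) lies in the $S_n$-stable open set $\CU_{[[n]]}(\BR)=\{\delta_{ij}\neq 0\}$, a contractible neighbourhood of the flower point $\infty$ in $\Fs{n}$; contractibility kills the path component and leaves only the equivariant part $w_i$ (the paper also records an explicit contraction of $p_i$ to the constant path at $\infty$). Finally, your proposed fallback --- a ``finite combinatorial check'' matching generators in cube-complex models --- presupposes a combinatorial model of the retraction $\Fs{n}\to\Fr{n}$, which the cited sources do not provide: the cube complex $\widehat D_n$ of \cite{IKLPR} models only the central fibre $\Fr{n}$, so this step would need to be built from scratch.
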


\subsection{Concrete coboundary monoidal categories} 

Recall, from \cite{HK}, that a \emph{coboundary category} is a monoidal category $ \mathcal{C} $ along with a natural isomorphism called \emph{commutor} $$ \sigma_{A, B} : A \otimes B \rightarrow B \otimes A$$
	satisfying the following two axioms.
	\begin{enumerate}
		\item For all $A, B \in \mathcal{C}$, we have $\sigma_{B, A} \circ \sigma_{A,B} = id_{A \otimes B} $
		\item For all $ A, B, C \in \mathcal{C} $ we have $\sigma_{A, B \otimes C}\circ (id \otimes\sigma_{B, C}) = \sigma_{A \otimes B, C}\circ (\sigma_{A, B} \otimes id)$.
	\end{enumerate}

All possible commutors on the $n$-fold tensor product	$X_1\otimes\ldots\otimes X_n$ in a coboundary category $\mathcal{C}$ generate an action of the cactus group $C_n$. 

Similarly, the \emph{virtual cactus group} $vC_n=\pi_1^{S_n}(\overline F_n)$ arises in \emph{concrete} coboundary monoidal categories. Namely, suppose we are given a coboundary monoidal category $\mathcal{C}$ and a faithful monoidal functor $F:\mathcal{C}\to Sets$. Then for any $n$-tuple of objects $X_1,\ldots,X_n\in \mathcal{C}$, the set $F(X_1\otimes\ldots\otimes X_n)=F(X_1)\times\ldots\times F(X_n)$ is naturally acted on by $vC_n$, where the symmetric group $S_n=\pi_1^{S_n}(pt)$ acts by naive permutations of factors (i.e. by commutors in the category of sets) and $C_n$ acts by commutors in the category $\mathcal{C}$. 

So, given a concrete coboundary monoidal category $\mathcal{C}$, any set $F(X_1\otimes\ldots\otimes X_n)=F(X_1)\times\ldots\times F(X_n)$ can be regarded as a covering of the cactus flower space $\overline{F}_n(\BR)$, by assigning $F(X_1\otimes\ldots\otimes X_n)$ as the fiber over the flower point $\infty\subset\overline{F}_n$. Suppose that $\mathcal{C}$ is \emph{$\Xi$-coloured} (that is semisimple with the isomorphism classes of simple objects indexed by the set $\Xi$). Then the above coverings for all possible $n$ and all possible collections of objects $X_1,\ldots,X_n$ are compatible with the stratification of $\overline{F}_n(\BR)$: we axiomatize such \emph{$\Xi$-coloured operadic coverings} in Section~\ref{se:compcover}. In the present paper, we prove the following statement, generalizing \cite[Theorem~4.16]{HKRW}.

\begin{thm}\label{th:operadic-intro}
    We have an equivalence of categories between the category of $\Xi$-coloured operadic coverings of the moduli spaces of cactus flower curves and the category of $\Xi$-coloured concrete coboundary categories.  
\end{thm}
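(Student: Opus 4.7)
The plan is to adapt the strategy of \cite[Theorem~4.16]{HKRW} from the Deligne-Mumford setting, where $\pi_1^{S_n}(\overline{M}_{n+1}(\mathbb{R})) = C_n$ classifies ordinary coboundary monoidal categories, to the cactus flower setting where $\pi_1^{S_n}(\overline{F}_n(\mathbb{R})) = vC_n$ should classify \emph{concrete} coboundary monoidal categories. The conceptual point is that the extra $S_n$ subgroup of $vC_n$, beyond the $C_n$ already present, must act by honest permutations on fibers over the flower point, which is exactly what a faithful monoidal functor $F\colon\mathcal{C}\to\mathtt{Set}$ records; so the flower compactification is the geometric shadow of the concreteness condition.

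To construct the forward functor from operadic coverings to concrete coboundary categories, given a $\Xi$-coloured operadic covering I would define $\mathcal{C}$ to be the $\Xi$-coloured semisimple category with simple objects $\{X_\xi\}_{\xi\in\Xi}$, and for objects $X_{\xi_1},\ldots,X_{\xi_n}$ define the underlying set $F(X_{\xi_1}\otimes\cdots\otimes X_{\xi_n})$ to be the fiber over the flower point $\infty\in\overline{F}_n(\mathbb{R})$ in the $(\xi_1,\ldots,\xi_n)$-coloured component. The operadic structure (gluing flowers by nodes) provides the monoidal product and associator, the commutor $\sigma_{A,B}$ is defined by monodromy along the cactus generator $s_{12}\in C_2\subset vC_2$, and the fiber functor $F$ is the forgetful functor recording the same set with the $S_n$-action coming from the symmetric monoidal structure on $\mathtt{Set}$.

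For the inverse functor, given a concrete coboundary $\Xi$-coloured category $(\mathcal{C},F)$ and each colour sequence $(\xi_1,\ldots,\xi_n)$, I would build the covering of $\overline{F}_n(\mathbb{R})$ associated with the $vC_n$-representation on $F(X_{\xi_1})\times\cdots\times F(X_{\xi_n})$ in which $C_n\subset vC_n$ acts via commutors in $\mathcal{C}$ and $S_n\subset vC_n$ acts by naive permutation of Cartesian factors; these two actions automatically satisfy the mixed defining relations $ws_{ij}=s_{w(i)w(j)}w$ of $vC_n$ because $F$ is a monoidal functor to $\mathtt{Set}$ and commutors are natural. Taking the disjoint union over all colour sequences yields the required operadic covering, with compatibility across strata following because each stratum of $\overline{F}_n(\mathbb{R})$ is (up to the bouquet at $\infty$) a product of smaller flower and Deligne-Mumford spaces, and the associated $vC_n$-representation restricts correspondingly.

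The main technical obstacle is the verification that these two constructions are mutually quasi-inverse, and more delicately that the operadic gluing data across all strata translates precisely into the coboundary and monoidal axioms. The crucial step is a local analysis at codimension-one and codimension-two strata of $\overline{F}_n(\mathbb{R})$, using the cube complex presentation of \cite{IKLPR}: codimension-one strata correspond to generators of $vC_n$ (cactus generators $s_{ij}$, bubblings at $\infty$, and $S_n$-transpositions arising from the flower point), while codimension-two strata produce on one hand the usual coboundary relations already handled in \cite{HKRW} and on the other hand the new mixed relations defining $vC_n$. Once this local dictionary is established, the global statement that the two functors are mutually inverse up to equivalence is essentially formal, parallel to \cite{HKRW}.
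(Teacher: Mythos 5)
Your proposal follows essentially the same route as the paper: one functor sends a concrete coboundary category to the operadic covering determined by the $vC_n$-action of Theorem \ref{th:vCnact} (commutors for $C_n$, naive permutations for $S_n$) together with the \cite{HKRW} construction over $\overline M_{n+1}(\BR)$, and the inverse recovers the category from fibres over the flower point plus parallel transport in $\overline F_2(\BR)$, with the remaining coherence (the hexagon, which collapses to a pentagon) checked exactly as you indicate by a local analysis on low-dimensional strata --- the paper uses the contractible positive locus $\overline F_3(\BR)_+$ whose boundary consists of the five relevant strata --- and mutual inverseness deduced formally via covering-space theory. This matches the paper's Sections \ref{se:FromCat}--\ref{se:FromOpCover} and Proposition \ref{th:CatCoverBack} in both structure and key ideas.
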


The main example of a coboundary monoidal category is the category of Kashiwara $\fg$-crystals (a combinatorial version of the category of finite-dimensional $\fg$-modules) for any complex simple Lie algebra $ \fg$. More precisely, to any irreducible representation $V(\lambda)$ one assigns an oriented graph $B(\lambda)$ (called the normal crystal of highest weight $\lambda$) whose vertices correspond to basis vectors of $V(\lambda)$ and are labeled by the weights of $V(\lambda)$, while the edges correspond to the action of the Chevalley generators and are labeled by the simple roots of $\fg$.  There is a purely combinatorial rule of tensor multiplication that provides a structure of a crystal on any Cartesian product $B(\lambda_1)\times B(\lambda_2)$.  This is a concrete coboundary monoidal category with $F$ being the forgetful functor. According to \cite{HKRW}, $\fg$-crystals as a coboundary monoidal category is determined by the operadic covering of $\overline M_{n+1}(\BR)$ formed by \emph{Bethe eigenlines for the Gaudin model} attached to $\fg$. In the present paper we extend this statement to $\fg$-crystals as a \emph{concrete} coboundary monoidal category and the operadic covering of moduli spaces of cactus flower curves formed by Bethe eigenlines for the \emph{inhomogeneous} Gaudin model.

\subsection{Gaudin models}
Let $U\fg$ be the universal enveloping algebra of $ \fg$. We consider some remarkable families of large commutative subalgebras in $U\fg^{\otimes n}$ whose construction goes back to Gaudin \cite{g1,g2} (for $\fg=\mathfrak{sl}_N$) so we call them \emph{Gaudin algebras}. In \cite{ffre}, Feigin, Frenkel and Reshetikhin gave the most general construction of such algebras from the center of the completed universal enveloping algebra of the affine Lie algebra $\hat{\fg}$ at the critical level. 

In \cite{IKR}, we considered three versions of the Gaudin models: rational homogeneous, rational inhomogeneous, and trigonometric, all obtained by certain versions of the Feigin-Frenkel-Reshetikhin construction.

\subsubsection{Rational homogeneous Gaudin model} The Gaudin  algebra $ \A(\uz) $ is a maximal commutative subalgebra of $ (U(\fg)^{\otimes n})^\fg $, which depends on an $ n $-tuple of distinct complex numbers $\uz=(z_1, \dots, z_n)$.  The quadratic part of the Gaudin algebras is spanned by the Gaudin Hamiltonians which are simple expressions involving the Casimir; the full definition of the algebra is more complicated and involves the Feigin-Frenkel centre of the completed universal enveloping of the affine Lie algebra $\hat{\fg}$ at the critical level. 

Fix $ \lambda_1, \dots, \lambda_n $ dominant weights and consider the tensor product of irreducible representations $ V(\ul) := V(\lambda_1) \otimes \cdots \otimes V(\lambda_n) $. The homogeneous Gaudin algebra commutes with the diagonal copy of $ \fg $, so it acts on any hom-space $ V(\ul)^\mu := \Hom_\fg(V(\mu),V(\lambda_1)\otimes\ldots\otimes V(\lambda_n)) $ for any collection of highest weights $\ul=(\lambda_1,\ldots,\lambda_n)$ and $\mu$.  

The tuple $ \uz = (z_1, \dots, z_n) $ can be regarded as a point in $ \overline{M}_{n+1} $ by taking $ \PP^1 $ and marking the points $ z_1, \dots, z_n $ and $ \infty $; in this way such tuples correspond to the non-boundary points of $ \overline{M}_{n+1}$.  In \cite{AFV}, Aguirre-Felder-Veselov proved that the family of subspaces spanned by the quadratic Gaudin Hamiltonians extends to a family parametrized by $ \overline{M}_{n+1} $. In \cite{Rcactus}, the same was proved for the whole subalgebras $\A(\uz)$.

\subsubsection{Rational inhomogeneous Gaudin model} The inhomogeneous Gaudin subalgebras $ \mathcal{A}_\chi(\uz) $, for regular $\chi\in\fh$, are maximal commutative subalgebras in $U\fg^{\otimes n}$. The subalgebras $ \mathcal{A}_\chi(\uz) $ are known to commute with the diagonal copy of the Cartan subalgebra $\fh\subset\fg$, so they act on $n$-fold tensor products of $\fg$-modules preserving weight spaces. In particular, the action of $ \mathcal{A}_\chi(\uz) $ on the tensor product of irreducible representations $ V(\ul) = V(\lambda_1) \otimes \cdots \otimes V(\lambda_n) $ preserves the weight spaces $V(\ul)_\mu$ and generically acts without multiplicities on any such space. Fix $\chi$ enough generic and consider $ \mathcal{A}_\chi(\uz) $ as a family of subalgebras depending on the parameter $\uz$ up to simultaneous additive shift of all the $z_i$'s. In \cite{IKR}, we extended this family to $\overline{F}_n$. 

\subsubsection{Trigonometric Gaudin model} In \cite{IKR}, we defined the trigonometric Gaudin subalgebras $\mathcal{A}^{trig}_\theta(\uz)\subset U\fg^{\otimes n}$. In type A, they were previously considered in \cite{mtv} and \cite{mr}. Fix $\uz=( z_1, \dots, z_n) \in \mathbb{C}^\times$ distinct and $ \theta \in \fh $. Similarly to inhomogeneous Gaudin subalgebras, $ \mathcal{A}^{trig}_\theta(\uz) $ commutes with the diagonal copy of the Cartan subalgebra $\fh\subset\fg$, so it acts on any weight space $ V(\ul)_\mu $. Similarly to the rational inhomogeneous case, if we fix $\theta$ enough generic, we get the family $ \mathcal{A}^{trig}_\theta(\uz) $ of subalgebras depending on the parameter $\uz$, but now up to simultaneous multiplicative shift of all the $z_i$'s. In \cite{IKR}, we extended this family to $\overline{M}_{n+2}$ and explained that in fact it is obtained from the family of homogeneous rational Gaudin algebras by certain quantum Hamiltonian reduction.

\subsubsection{Degeneration of trigonometric Gaudin subalgebras to rational inhomogeneous} 
In \cite{IKR}, we proved that the trigonometric Gaudin algebras can degenerate to the inhomogeneous ones.  Namely, trigonometric and rational inhomogeneous (with $ \theta$ fixed) combine together into a single family of commutative subalgebras in $U\fg^{\otimes n}$ parametrized by the space $ \overline \CF_n$ introduced in \cite{IKLPR}.  
This is the total space of a degeneration of $ \overline M_{n+2}$ to $ \overline F_n $.   

\subsection{Monodromy of Bethe eigenlines} In \cite{HKRW}, along with Halacheva and Weekes, we studied the covering of $\overline{M}_{n+1}(\BR)$ by joint eigenlines for the action of homogeneous Gaudin algebras and related it to the action of the cactus group $ C_n = \pi_1^{S_n}(\overline M_{n+1}(\BR))$ on tensor products of crystals. Namely, we proved that for all values of the parameter $\uz\in\overline{M}_{n+1}$, the joint eigenvalues of $\mathcal{A}(\uz)$ on $\Hom_\fg(V(\mu),V(\ul))$ are all different, and, furthermore, there is a bijection between those joint eigenvalues and the multiplicity set of $ B(\mu)$ in the tensor product of $\fg$-crystals $B(\lambda_1)\otimes\ldots\otimes B(\lambda_n)$, compatible with the $C_n$-action. 

Similarly, in \cite{IKR}, we proved that for fixed real $\chi$ (respectively, under appropriate assumptions on $\theta$) the inhomogeneous (respectively, trigonometric) Gaudin algebra acts without multiplicities on the weight space $V(\ul)_\mu$ for all values of the parameter from $\overline{F}_n(\BR)$ (respectively, from $\Ms{n+2}$ and $\Mc{n+2}$). The main result of the present paper is the following statement conjectured in \cite{IKR}.

\begin{thm}\label{th:main-intro}
\begin{enumerate} \item The monodromy action of $ vC_n = \pi_1^{S_n}(\overline F_n(\mathbb{R}))$ on the inhomogeneous Gaudin eigenlines in $ V(\ul)_\mu$ matches the action of $ vC_n$ on $ \mathcal B(\lambda_1) \times \cdots \times \mathcal B(\lambda_n)$ given by crystal commutors and naive permutation of tensor factors.

\item The monodromy action of $MC_n=\pi_1^{S_n}(\Ms{n+2})$ on the trigonometric Gaudin eigenlines in $V(\ul)_\mu$ factors surjectively through the action of $ vC_n$.
\item
The monodromy action of $ \widetilde{AC}_n = \pi_1^{S_n}(\Mc{n+2})$ on the trigonometric Gaudin eigenlines in $ V(\ul)_\mu$ factors through the action of $ vC_n $ and is given by crystal commutors and cyclic rotation of tensor factors.
\end{enumerate}
\end{thm}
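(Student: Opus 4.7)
The plan is to deduce part (1) directly from Theorem \ref{th:operadic-intro} by constructing a $\Xi$-coloured operadic covering of $\overline F_n(\BR)$ from the inhomogeneous Bethe eigenlines, and identifying the associated concrete coboundary category with the category of Kashiwara $\fg$-crystals (with $\Xi$ the set of dominant integral weights of $\fg$). Parts (2) and (3) will then follow from part (1), the degeneration of trigonometric Gaudin algebras to inhomogeneous ones realised over $\Fs{n}$ and $\Fc{n}$, and the retraction homomorphisms computed in the Theorem stated just above.

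The first step is the construction of the operadic covering. The family $\A_\chi(\uz)$ extends to $\overline F_n$ by \cite{IKR}, and it was shown there that for sufficiently generic real $\chi$ this algebra acts with simple real spectrum on each weight space $V(\ul)_\mu$ at every point of $\overline F_n(\BR)$. Factorisation of $\A_\chi$ at boundary strata of $\overline F_n$ (where groups of marked points bubble into additional flower petals) is compatible with the stratified structure of $\overline F_n(\BR)$, and the highest weights assigned to the bubbles supply the $\Xi$-colouring. This exhibits the Gaudin eigenlines as a $\Xi$-coloured operadic covering in the sense of Section \ref{se:compcover}; applying Theorem \ref{th:operadic-intro} produces a concrete coboundary monoidal category $\mathcal{C}_{\mathrm{Gaudin}}$.

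The core of the argument is the identification of $\mathcal{C}_{\mathrm{Gaudin}}$ with the category of $\fg$-crystals. At the maximally degenerate flower point $\infty\in\overline F_n$, the algebra $\A_\chi(\infty)$ factorises as $\A_\chi^{\otimes n}$ acting on $V(\lambda_1)\otimes\cdots\otimes V(\lambda_n)$, and the joint eigenlines of the single-factor algebra $\A_\chi$ on $V(\lambda_i)$ are naturally indexed, compatibly with weight, by the crystal $B(\lambda_i)$. This yields an identification of the fiber of the operadic covering over $\infty$ with $B(\lambda_1)\times\cdots\times B(\lambda_n)$, matching the forgetful functor on crystals. The $C_n$-subaction of the monodromy is matched with the crystal commutor by the strategy of \cite{HKRW}: paths in $\overline F_n(\BR)$ realising cactus generators can be homotoped into the corresponding homogeneous substratum, where the main theorem of \cite{HKRW} applies. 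The $S_n$-subaction, realised by paths that permute the flower petals at infinity, acts by naive permutation, since in a neighbourhood of the flower point the tensor factorisation of the algebra is manifest and a permutation of petals literally permutes the labels $(\beta_1,\dots,\beta_n)\mapsto(\beta_{w^{-1}(1)},\dots,\beta_{w^{-1}(n)})$.

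For parts (2) and (3), the trigonometric Gaudin algebras over $\Ms{n+2}$ (respectively $\Mc{n+2}$) deform into the inhomogeneous algebras over $\overline F_n(\BR)$ through $\Fs{n}$ (respectively $\Fc{n}$); since $\overline F_n(\BR)$ is a deformation retract of both by \cite[Theorems 9.23, 9.24]{IKLPR}, the monodromy homomorphism for the trigonometric model factors through $vC_n$. The images of the generators given by the Theorem stated above then determine the action explicitly: for (2), the images $s_{ij}$ and $w_{i,i+1}$ of the generators of $MC_n$ generate all of $vC_n$, giving surjectivity; for (3), the image of $c$ is the long cycle in $S_n\subset vC_n$, producing cyclic rotation of tensor factors combined with crystal commutors. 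The main obstacle is the verification that the $S_n$-part of the monodromy acts by naive permutation: although \cite{HKRW} handles the cactus subgroup, a careful geometric analysis of permutation paths in $\overline F_n(\BR)$, and their relationship to the flower-boundary factorisation of $\A_\chi$, is required to pin down this new piece of structure.
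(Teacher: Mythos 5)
Your overall route is the paper's route: build a $\Lambda_+$-coloured operadic covering from the inhomogeneous eigenlines via the boundary factorization of $\CA_\chi$ (Theorem \ref{th:operalg}), feed it through the equivalence of Theorem \ref{th:operadic-intro}, identify the resulting concrete coboundary category with $\gcrys$, and then get (2) and (3) by restricting the covers over $\Fs{n}$, $\Fc{n}$ to $\varepsilon\ne 0$ and using the computed retraction homomorphisms (Theorems \ref{th:pi1split} and \ref{th:AC}). But there is a genuine gap at what you call the core step, and you have mislocated the difficulty. The $S_n$-part that you flag as ``the main obstacle'' is in fact automatic: $\infty$ is an $S_n$-fixed point, the algebra there factors as $\CA_\chi^{\otimes n}$, and the covers and all operadic identifications are $S_n$-equivariant by construction, so naive permutation of petals falls out of the formalism (Theorem \ref{th:vCnact}, Proposition \ref{th:CatCoverBack}). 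What is \emph{not} automatic is the concreteness datum: you must show that the bijection $\phi_{\l_1,\l_2}:\sqcup_\mu (B(\l_1)\otimes B(\l_2))_\mu\times \mathcal B(\mu)\to\mathcal B(\l_1)\times\mathcal B(\l_2)$ obtained by parallel transport in $\overline F_2(\BR)$ from the divisor $\overline M_3$ to the flower point agrees with the crystal-theoretic tensor decomposition under the identifications of \cite{HKRW}. Your argument ``homotope the cactus generators into the homogeneous substratum and apply \cite{HKRW}'' only controls the $C_n$-monodromy on multiplicity sets relative to the labelling of \cite{HKRW}; without the compatibility of that labelling with your labelling of the $\infty$-fibre by $\mathcal B(\l_1)\times\cdots\times\mathcal B(\l_n)$, you cannot conclude that $C_n$ acts by crystal commutors on the product. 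This is exactly the content of Theorem \ref{th:Main} in the paper: the check $\phi_{\l_1,\l_2}=\psi_{\l_1,\l_2}$, which succeeds because the monoidal structure on the equivalence of \cite[\S 14.1, Prop.\ 10.16]{HKRW} was itself defined by the same two-point inhomogeneous (shift-of-argument) family over $\overline F_2$. Your proposal never addresses this compatibility, and it is the one place where content beyond \cite{HKRW} plus the covering formalism is required.

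A second, more technical omission concerns parts (2) and (3). To even have a covering whose monodromy you can compare, you need cyclicity and semisimplicity of $\CA^{\varepsilon}_\chi(C)$ over the whole relevant real locus: cyclicity holds only for all but finitely many $\varepsilon$ (Theorem \ref{th:trig-cyclic}), forcing the restriction to $|\varepsilon|<c$ and the rescaling homeomorphism (Corollary \ref{co:Expand}); and in the compact case semisimplicity for fixed real $\chi$ fails, since Theorem \ref{th:trig-semisimple}(2) requires $\chi-\frac{\varepsilon}{2}\mu\in\fh^{split}$ with $\varepsilon\in i\R$. The paper has to introduce the $\varepsilon$-dependent family $\CA^{\varepsilon}_{\chi+\frac{\varepsilon}{2}\mu}$ (Section \ref{se:AGen}) to repair this before the retraction argument can be run over $\Fc{n}$; your phrase ``the trigonometric Gaudin algebras deform into the inhomogeneous ones'' elides this point, and as written your compact-case covering need not exist.
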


\subsection{Combinatorial wall-crossing for minuscule resolutions of slices in the affine Grassmannian} 
 According to the general philosophy of Bezrukavnikov and Okounkov, we expect that the above monodromy theorem can be put into the following general context as the combinatorial counterpart of \emph{wall-crossing functors} between the derived categories $\mathcal{O}$ of modular representations of the quantization of a conical symplectic singularity (see \cite[Sections~1.8-1.11]{IKR} for more detailed discussion). 

Let $Y$ be a symplectic resolution of a conical symplectic singularity with the action of the torus $T$. Then the small quantum $T$-equivariant cohomology ring is a family of commutative algebras acting on the equivariant cohomology $H^\bullet_{T\times \Cx}(Y)$. The space of parameters is the complement of some divisor in the torus $T'=\Hom(H_2(Y,\BZ),\Cx)$ formed by cosets by certain algebraic subtori in $T'$. The closure of this family of algebras is some compactification of this complement (in all known examples it is an appropriate version of the de Concini-Procesi wonderful closure). We expect that for real values of the parameters (in the compact real form) the corresponding algebra acts semisimply with simple spectrum. Thus we get an action of the fundamental group of the closure on the set of eigenvectors. The divisor splits the compact real form of $T'$ into a disjoint union of \emph{alcoves}, so the corresponding real locus of the de Concini-Procesi compactification is the union of the same alcoves with a normal crossings boundary divisor. Thus we can view this fundamental group as a groupoid whose objects are the alcoves and morphisms are paths between the interiors of the alcoves transversal to the boundary. Following Bezrukavnikov and Okounkov, we conjecture that this action comes from the action of the fundamental groupoid of the space of real stabilities of Anno, Bezrukavnikov and Mirkovic \cite{ABM} on the $K$-group of the category of coherent sheaves on $Y$ coming from \emph{wall-crossing functors}, via an appropriate analog of Drinfeld's unitarization procedure and taking the $\hbar\to0$ limit. The setting of \cite{ABM} looks similar because the real space of stabilities is the union of the same alcoves, and the transitions across the codimension~$1$ strata of the De Concini--Procesi closure are perverse equivalences, as it is explained by Losev in \cite[section 10.1]{Losev}. More precisely, let $A$ be an alcove in the space of real stabilities and let $\mathcal{E}(A)$ be the set the joint eigenlines for the operators of quantum multiplication by equivariant cohomology of $Y$ on $H^\bullet_T(Y)$, with the quantum parameter from the alcove $A$. 
\begin{conj}
There is a natural indexing of the set $\mathcal{E}(A)$ by simple objects of the abelian category (i.e. heart of the $t$-structure) corresponding to the alcove $A$ such that the monodromy of $\mathcal{E}(A)$ along the paths crossing the boundary divisor transversely are given by the permutations of simple objects determined by the corresponding perverse equivalence.
\end{conj}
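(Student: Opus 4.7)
My plan is to combine the explicit Bethe-ansatz monodromy computation of Theorem \ref{th:main-intro} with the categorical framework of Anno-Bezrukavnikov-Mirkovi\'c \cite{ABM} and Losev \cite{Losev}. I would first handle the case where $Y$ is a minuscule resolution of a slice in the affine Grassmannian of $\fg$, which is the situation to which the main theorem of this paper already reduces, and then isolate what extra input is needed to attack a general conical symplectic resolution.

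For the minuscule slice case, I would identify the small quantum $T$-equivariant multiplication on $Y$ with the action of a trigonometric Gaudin algebra $\mathcal{A}^{trig}_\theta(\uz)$ on a weight space $V(\ul)_\mu$, using the Braverman-Finkelberg-Nakajima Coulomb branch description together with quantum/Coulomb matching results of Teleman-Danilenko type. Under this identification, the parameter torus $T'$ corresponds to the base of the trigonometric Gaudin family, its De Concini-Procesi closure to $\Mc{n+2}$, alcoves in the compact real form of $T'$ to the connected components of the real locus of $\Mc{n+2}$, and $\mathcal{E}(A)$ to the set of trigonometric Bethe eigenlines. Boundary-transverse paths then generate $\pi_1^{S_n}(\Mc{n+2}) = \widetilde{AC}_n$, whose action on $\mathcal{E}(A)$ is described by Theorem \ref{th:main-intro}(3) through the virtual cactus group $vC_n$ via crystal commutors and cyclic rotations. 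On the categorical side, geometric Satake combined with the categorical $\fg$-actions on derived categories of minuscule slices should label the simples in each alcove heart by elements of $(B(\lambda_1)\times\cdots\times B(\lambda_n))_\mu$, so that the conjecture reduces to comparing two a priori different combinatorial $\widetilde{AC}_n$-actions on the same set; matching them is a rigidity statement that should follow from tracking one adjacent pair of alcoves, since the virtual cactus relations are known to present $vC_n$.

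For a general symplectic resolution, the main difficulty is categorical. One must extract, from Losev's result \cite[Section 10.1]{Losev} that codimension-one wall-crossings are perverse equivalences, an explicit permutation of simples between adjacent alcove hearts, and prove that it agrees with the monodromy of eigenlines produced by Drinfeld unitarization of the quantum connection followed by the $\hbar\to 0$ limit. The coboundary-monoidal framework of Theorem \ref{th:operadic-intro} provides a natural language for organizing such coherences into a consistent action of the relevant fundamental groupoid, but extending it beyond the affine Grassmannian setting -- in particular controlling the compatibilities at the nodes of the De Concini-Procesi boundary where several walls meet -- appears to require genuinely new categorical input, namely an operadic covering structure associated to the alcove stratification of a general wonderful compactification. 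That is where I expect the bulk of the difficulty to lie, and I would treat the minuscule slice case both as a guiding test and as the source of a universal recipe that the general construction must specialize to.
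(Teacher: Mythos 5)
The statement you are addressing is a \emph{conjecture} in the paper, and the paper does not prove it: the authors only prove the monodromy computation (Theorem \ref{th:main-intro}(3), i.e.\ Theorem \ref{th:MonodComp}) and then observe that, \emph{assuming} the expected description of the wall-crossing permutations (they explicitly write that they ``expect'' the bijections coming from perverse equivalences to generate the $\widetilde{AC}_n$-action on the tensor product crystal, citing \cite{HLLY}), the conjecture for minuscule resolutions of slices would specialize to their theorem via the identifications of Ginzburg--Riche \cite{gr} and Danilenko \cite{Dan}. Your proposal follows essentially this same motivating discussion, but it does not close the gap that keeps the statement a conjecture. Concretely: (a) the categorical half --- indexing the simples of each alcove heart by $(\mathcal B(\lambda_1)\times\cdots\times\mathcal B(\lambda_n))_\mu$ and proving that the permutations of simples induced by the ABM/Losev perverse equivalences are given by crystal commutors and cyclic rotation --- is exactly the unproven input; it does not follow from geometric Satake plus categorical $\fg$-actions, and the paper treats it as an expectation, not a fact. (b) Your ``rigidity'' step is flawed as stated: to match two actions of $\widetilde{AC}_n$ (or $vC_n$) it is not enough to know a presentation of the group and to check one adjacent pair of alcoves; you must verify agreement on a full generating set (all $s_{ij}$ and $r$), and, prior to that, you must show that the categorical wall-crossing bijections satisfy the cactus relations at all --- i.e.\ the coherences at codimension-two strata of the compactified parameter space --- which is precisely the hard part and is not supplied by Losev's codimension-one perversity statement. (c) For a general conical symplectic resolution you yourself note that genuinely new input (an operadic covering structure for general wonderful compactifications, plus the unitarization/$\hbar\to0$ comparison) is required, so no proof is offered there either.

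In short, your text is a reasonable research program that mirrors the paper's own heuristic reduction of the conjecture to Theorem \ref{th:main-intro}(3) in the minuscule case, but it is not a proof of the conjecture, and its one substantive new claim --- that matching the geometric and categorical actions reduces to a single wall-crossing by presentation-theoretic rigidity --- is incorrect without first establishing that the categorical side defines an action of the (extended affine) cactus group and then comparing it on all generators.
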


Let $\lambda$ be a dominant $\fg$-weight expressible as a sum of minuscule weights $\lambda=\lambda_1+\ldots+\lambda_n$.  Then for any $\fg$-weight $\mu$ that occurs in the representation $V(\lambda)$, there is a transversal slice in the affine Grassmannian of the Langlands dual group ${}^LG$, namely, the slice $\mathrm{Gr}^\lambda_\mu$ to the ${}^LG[[t]]$-orbit of $t^\mu$ in the closure of the ${}^LG[[t]]$-orbit of $t^\lambda$. This is a conical symplectic singularity admitting a symplectic resolution $ Y = \mathrm{Gr}^\ul_\mu $ determined by the collection of minuscule weights $\ul:=(\l_1,\ldots,\l_n)$. According to Ginzburg and Riche \cite{gr}, the (specialized) equivariant cohomology space of the minuscule resolution $ Y = \mathrm{Gr}^\ul_\mu $ of a slice $\mathrm{Gr}^\lambda_\mu$ in the affine Grassmannian related to a simply-laced Lie algebra ${}^L\fg$ is the $\mu$-th weight space $V(\ul)_\mu$ in the tensor product of minuscule representations $V(\lambda_1)\otimes\ldots\otimes V(\lambda_n)$. Next, according to Danilenko \cite{Dan} the quantum connection in the equivariant cohomology $ Y $ is the trigonometric KZ connection attached to $\fg$.  In fact, Danilenko proved that the operators of quantum multiplication by divisors are given by the quadratic trigonometric Gaudin Hamiltonians, where $\theta\in\fh^*$ is the equivariant parameter interpreted as a weight of $\fg$ and $z_1,\ldots, z_n$ are quantum parameters. Since generically the operators of quantum multiplication act with a simple spectrum, this means that, generically, the quantum cohomology ring is the image of the trigonometric Gaudin subalgebra $\mathcal{A}_{\theta}^{trig}(z_1,\ldots,z_n)\subset U(\fg)^{\otimes n}$ in $V(\ul)_\mu$. We expect that the bijections coming from perverse equivalences generate the action of the extended affine cactus group $ \widetilde{AC}_n $ on the corresponding tensor product crystal (see \cite{HLLY}). With this in mind, the third assertion of Theorem~\ref{th:main-intro} is a particular case of the above conjecture, for minuscule resolutions of slices in the affine Grassmannian of a simply-laced ${}^L\fg$. 

\subsection{Organization} The paper is organized as follows. In Section 2, we give a general background on coboundary monoidal categories and show how virtual cactus group arises in \emph{concrete} coboundary categories.  Section 3 contains some generalities on $\fg$-crystals. In section 4 we recall the results about moduli spaces of cactus flower curves. In section 5 we define operadic coverings of these moduli spaces and prove Theorem~\ref{th:operadic-intro} explaining their relation to concrete coboundary categories. In section 6 we recall the main results of \cite{IKR} about rational inhomogeneous and trigonometric Gaudin subalgebras. In section 7 we prove Theorem~\ref{th:main-intro}.

\subsection{Acknowledgments} We thank Alexander Braverman, Iva Halacheva, Aleksei Ilin, and Vasily Krylov for comments and suggestions.  The work of L.R. was supported by the Fondation Courtois. 

\section{Concrete coboundary categories}\label{se:ConcreteCat}
	\subsection{Concrete $\Xi$-coloured categories} 
We will define the notion of a $\Xi$-coloured category, where $ \Xi $ is a set.

\begin{defn}
	Let $ \Xi$ denote an arbitrary set.  We define the category of $\Xi$-graded sets, $ \Xiset $, as follows.  The objects of $ \Xiset$ are sequences $ (A_\l)_{\l \in \Xi} $ where each $ A_\l $ is a finite set, with only finitely many $ A_\l $ non-empty.  The morphisms of $ \Xiset $ are given by
	$$
	\Hom_{\Xiset}(A, B) = \prod_\l \Hom_{\set}(A_\l, B_\l)
	$$
	
	A category $ \cC $ is called \textbf{$\Xi$-coloured}, if we are given an equivalence between $ \cC $ and $ \Xiset $.	
\end{defn}
We write $ L(\l) $ for the object of $ \Xiset $ with $ L(\l)_\l = \{* \} $ and $ L(\l)_\mu = \emptyset $ for $ \mu \ne \lambda $.

 The data of a monoidal structure on $ \Xiset$ is given by sets $  (L(\l_1) \otimes L(\l_2))_\mu $ for each $ \l_1, \l_2, \mu \in \Xi $ so that
$$
(A \otimes B)_\mu = \bigsqcup_{\l_1, \l_2 \in \Xi} (L(\l_1) \otimes L(\l_2))_\mu \times A_{\l_1} \times B_{\l_2}
$$
along with an associator $ \alpha $, which is given by bijections 
$$ \alpha_{\l, \mu, \nu} : \sqcup_{\rho, \gamma} (L(\l) \otimes L(\mu))_\gamma \times (L(\gamma) \otimes L(\nu))_\rho \rightarrow \sqcup_{\rho, \tau} (L(\l) \otimes L(\tau))_\rho \times (L(\mu) \otimes L(\nu))_\tau
	$$

Let $\cC, \cD $ be two monoidal categories.  A monoidal functor is a functor $ \Phi : \cC \rightarrow \cD $ along with a natural isomorphism $ \phi_{A,B}: \Phi(A \otimes B) \rightarrow \Phi(A) \otimes \Phi(B) $.  This natural isomorphism must satisfy a hexagon axiom that relates the associators in $ \cC $ and $ \cD$.  For all $ A, B, C \in \cC$,
	\begin{equation} \label{eq:hex1}
		\begin{tikzcd}
		  & \Phi((A \otimes B) \otimes C) \ar[r,"{\Phi(\alpha_{A, B, C})}", outer sep = 2 pt] \ar[dl,"{\phi_{A\otimes B, C}}", outer sep = -2 pt] & \Phi(A \otimes (B \otimes C)) \ar[dr,"{\phi_{A, B \otimes C}}", outer sep = -2 pt]& \\
				\Phi(A \otimes B) \otimes C \ar[dr,"{\phi_{A,B} \otimes id}", outer sep = -2 pt] & & & \Phi(A) \otimes \Phi(B \otimes C)  \ar[dl,"{id \otimes \phi_{B, C} }", outer sep = -2 pt] \\
				& (\Phi(A) \otimes \Phi(B)) \otimes \Phi(C) \ar[r,"{\alpha_{\Phi(A), \Phi(B), \Phi(C)}}", outer sep = 2 pt] & \Phi(A) \otimes (\Phi(B) \otimes \Phi(C) )&
		\end{tikzcd}
		\end{equation}

A $\Xi$-coloured monoidal category is called \textbf{concrete} if we are given a monoidal, faithful, functor $ \Phi : \cC \rightarrow \set $.  Using the equivalent between $ \cC $ and $ \Xiset $, it is easy to see that such a monoidal functor is given by the following data
\begin{enumerate}
	\item For each $ \l \in \Xi $, a non-empty set $ \CL(\l) := \Phi(L(\l))$.  The value of $ \Phi $ on $ (A_\l) $ is given by $ \sqcup A_\l \times V(\l)$.
	\item For each pair $ \l_1, \l_2 \in \Xi $, a bijection $  \phi_{\l_1, \l_2} : \sqcup_\mu (L(\l_1) \otimes L(\l_2))_\mu \times \CL(\mu)  \rightarrow \CL(\l_1) \times \CL(\l_2)  $. This is the isomorphism $ \Phi(L(\l_1) \otimes L(\l_2)) \rightarrow \Phi(L(\l_1)) \times \Phi(L(\l_2)) $, which is the monoidal structure on the functor $ \Phi$.
\end{enumerate}

\subsection{Concrete coboundary categories}
	Recall that a \textbf{coboundary category} is a monoidal category $ \cC $ along with a natural isomorphism $$ \sigma_{A, B} : A \otimes B \rightarrow B \otimes A$$
	satisfying the following two axioms.
	\begin{enumerate}
		\item For all $A, B \in \cC$, we have $\sigma_{B, A} \circ \sigma_{A,B} = id_{A \otimes B} $
		\item For all $ A, B, C \in \cC $, the following hexagon commutes
		\begin{equation} \label{eq:hex}
		\begin{tikzcd}
		  & (A \otimes B) \otimes C \ar[r,"{\alpha_{A, B, C}}"'', outer sep = 2 pt] \ar[dl,"{\sigma_{A\otimes B, C}}"', outer sep = -2 pt] & A \otimes (B \otimes C) \ar[dr,"{\sigma_{A, B \otimes C}}", outer sep = -2 pt]& \\
				C \otimes (A \otimes B) \ar[dr,"{id \otimes \sigma_{A,B}}"', outer sep = -2 pt] & & & (B \otimes C) \otimes A \ar[dl,"{\sigma_{B, C} \otimes id}", outer sep = -2 pt] \\
				& C \otimes (B \otimes A) \ar[r,"{\alpha_{C, B, A}^{-1}}"', outer sep = 2 pt] & (C \otimes B) \otimes A &
		\end{tikzcd}
		\end{equation}
	\end{enumerate}

If we have a $ \Xi$-coloured concrete coboundary category, then for any $ \l, \mu \in \Xi $, we can consider the bijection $$\Phi(\sigma_{\l, \mu}): \CL(\l) \times \CL(\mu) \rightarrow \CL(\mu) \times \CL(\l) $$  Note that in general this will be different from the bijection $ \CL(\l) \times \CL(\mu) \rightarrow \CL(\mu) \times \CL(\l) $ given by $ (a,b) \mapsto (b,a)$.  So the hexagon equation (\ref{eq:hex1}) forces a relation between the associators in $ \cC $ and $\Xiset$, but the commutors in these two categories are unrelated.  

To formalize this section, we can consider the category $ \XiCCob$ of all $\Xi$-coloured concrete coboundary categories.  By the above analysis, an object $\cC $ of $ \XiCCob$ is the data of
\begin{equation} \label{eq:XiCCob} \begin{gathered}
\text{sets } (L(\la_1) \otimes L(\la_2))_\mu \text{ for $ \la_1, \la_2, \mu \in \Xi$, and } \CL(\l) \text{ for $ \la \in \Xi$, along with bijections}\\
 \alpha_{\l, \mu, \nu} : \sqcup_{\rho, \gamma} (L(\l) \otimes L(\mu))_\gamma \times (L(\gamma) \otimes L(\nu))_\rho \rightarrow \sqcup_{\rho, \tau} (L(\l) \otimes L(\tau))_\rho \times (L(\mu) \otimes L(\nu))_\tau \\ \phi_{\l_1, \l_2} : \sqcup_\mu (L(\l_1) \otimes L(\l_2))_\mu \times \CL(\mu)  \rightarrow \CL(\l_1) \times \CL(\l_2) \\ \Phi(\sigma_{\l, \mu}): \CL(\l) \times \CL(\mu) \rightarrow \CL(\mu) \times \CL(\l)
\end{gathered}
\end{equation}
which must satisfy the following conditions:
\begin{enumerate}
    \item The pentagon axiom for the associator $ \alpha$.
    \item The hexagon axiom (\ref{eq:hex1}) relating the associator $\alpha$ and the monoidal structure $ \phi$.
    \item The involutive axiom for the commutor $\Phi(\sigma_{\l, \mu})$ and the hexagon axiom (\ref{eq:hex}) relating the assoicators and the commutors.
\end{enumerate}

A morphism in the category $ \XiCCob$ is a functor $ \cC \rightarrow \cD $ compatible with all this structure, including the equivalences with $ \Xiset$.  Thus, such a morphism is given by a map between the above collections of sets, compatible with above bijections (\ref{eq:XiCCob}).

\subsection{The virtual cactus group} \label{se:vCn}
 The \textbf{cactus group} $ C_n $ is defined to be the group with generators $ s_{ij} $ for $ 1 \le i< j \le n $ and relations
	 \begin{enumerate}
	\item $ s_{ij}^2 = 1 $
	\item $ s_{ij} s_{kl} = s_{kl} s_{ij} $ if $ [i,j] \cap [k,l] = \emptyset $
	\item $ s_{ij} s_{kl} = s_{ w_{ij}(l)  w_{ij}(k)} s_{ij} $ if $ [k,l] \subset [i,j] $
\end{enumerate}
Here $ w_{ij} \in S_n $ is the element of $ S_n $ which reverses $ [i,j] $ and leaves invariant the elements outside this interval.  We write $ w_i = w_{i i+1} $, the usual elementary transposition.

From \cite[\S 20.5]{IKLPR}, we recall the definition of the virtual cactus group. Suppose that $1 \le  i< j \le n$.  We say that $ w \in S_n $ is a \textbf{translation} on $[i,j] $, if $ w(i+k) = w(i)+k$ for $ k = 1,\dots, j-i$.  

\begin{lem} \label{le:cable} Fix $ 1 \le  i< j \le n$.
    There is a bijection (called ``cabling'') 
    $$ S_{n - (j-i)} \longrightarrow \bigl\{ w \in S_n : w \text{ is a translation on } [i,j]  \bigr\}$$  taking $ u \in S_{n-(j-i)} $ to $ w \in S_n $, where $ w $ is defined by
    $$
 w(a) = \begin{cases}  u(a) \text{ if $ a < i $ and $ u(a) < p $ } \\
 u(a) + q  \text{ if $ a < i $ and $ u(a) > p $ } \\
 p + a - i \text{ if  $ i \le a \le j $  } \\
 u(a - q ) \text{ if $ a > j $ and $ u(a) < p $ } \\
 u(a-q) + q  \text{ if $ a > j $ and $ u(a) > p $ } 
 \end{cases}
$$
where $ q = j-i $ and $ p = w(i)$.
\end{lem}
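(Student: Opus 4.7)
The approach is to verify that the displayed formula defines a permutation of $[1,n]$ that is a translation on $[i,j]$, and then to exhibit an explicit inverse given by a ``contraction'' procedure. Throughout, write $q = j-i$ and observe that although the formula refers to $p = w(i)$, the value $p$ is in fact determined by $u$: comparing the middle case of the formula at $a = i$ forces $p = u(i)$. Since $u$ is a bijection of $[1, n-q]$ with $u(i) = p$, one has $u(a) \ne p$ for every $a \ne i$, so the two sub-cases $u(a) < p$ and $u(a) > p$ in the formula are exhaustive.

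Step one is to check that the formula yields a bijection $w : [1,n] \to [1,n]$ which is a translation on $[i,j]$. The middle case sends $i + k \mapsto p + k$ for $0 \le k \le q$, establishing simultaneously that $w$ is a translation on $[i,j]$ and that $[i,j]$ maps bijectively onto $[p, p+q]$. Outside $[i,j]$, the formula shifts values exceeding $p$ upward by $q$, so the image lies in $[1, p-1] \cup [p+q+1, n]$. Because $\{u(a) : a \in [1, n-q] \setminus \{i\}\} = [1, n-q] \setminus \{p\}$ bijectively, this shift turns the outside part of the formula into a bijection of $[1,n] \setminus [i,j]$ onto $[1,n] \setminus [p, p+q]$. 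Combining the two pieces, $w$ is a bijection of $[1,n]$.

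Step two is to construct the inverse. Given $w \in S_n$ that is a translation on $[i,j]$, set $p = w(i)$; then $w$ maps $[i,j]$ bijectively onto $[p, p+q]$, so $w(a) \notin [p, p+q]$ for $a \notin [i,j]$. Define $u \in S_{n-q}$ by $u(i) = p$, and for $a \in [1, n-q] \setminus \{i\}$ let $u(a)$ equal $w(a)$ or $w(a) - q$ when $a < i$, and $w(a+q)$ or $w(a+q) - q$ when $a > i$, according to whether the corresponding value of $w$ lies below $p$ or above $p+q$. A direct check shows that $u$ is a bijection of $[1, n-q]$, and tracing the definitions shows that cabling and contraction are mutually inverse.

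The main obstacle is purely organizational: there are four sub-cases (position below or above $[i,j]$, value below or above $[p, p+q]$) to verify when checking bijectivity and the inverse property. The conceptual picture that trivializes the bookkeeping is that cabling inflates the single matrix entry at position $(i, p)$ in the permutation matrix of $u$ into a $(q+1) \times (q+1)$ identity block, and contraction is the reverse operation; from this viewpoint it is evident why the two procedures are mutually inverse.
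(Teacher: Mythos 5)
Your proof is correct. The paper gives no proof of this lemma (it is recalled from \cite[\S 20.5]{IKLPR} and treated as routine), and your argument --- reading the formula with $p=u(i)$, checking that $w$ maps $[i,j]$ onto $[p,p+q]$ and the complement onto $[1,n]\setminus[p,p+q]$ via the shifted copy of $u$, and writing down the explicit contraction inverse --- is exactly the intended argument, with the identity-block inflation of the permutation matrix being the right picture. One minor wording point: the middle case at $a=i$ only yields $w(i)=p$, which is the definition of $p$ rather than a derivation of $p=u(i)$; what actually pins down $p=u(i)$ is the requirement that the sub-cases $u(a)<p$ and $u(a)>p$ be exhaustive for $a\ne i$ (together with $p+q\le n$, needed for $w$ to land in $[1,n]$), which is precisely how you use it in the rest of the argument, so nothing downstream is affected.
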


We define the \textbf{virtual cactus group} $vC_n $ to be the quotient of the free product $ C_n * S_n $ by the relations
$$
w s_{ij} w^{-1} = s_{w(i) w(j)}
$$
for all $ 1 \le i < j \le n $ and $ w \in S_n $, such that $ w $ is a translation on $[i,j]$.

There is a homomorphism $ vC_n \rightarrow S_n $ which takes $ s_{ij} $ to $ w_{ij} $ and is the identity on $ S_n $.

\subsection{Virtual cactus group action}

In \cite{HK}, we proved that $C_n $ acts on tensor products in coboundary categories.  We now extend this result and show that $ vC_n $ acts on the images under $ \Phi$ of tensor products in concrete coboundary categories.

\begin{thm} \label{th:vCnact}
Let $ \cC $ be a $\Xi$-coloured concrete coboundary category.  Then there is an action of $ vC_n $ on $ \sqcup_{\ul \in \Xi^n} \CL(\l_1) \times \cdots \times \CL(\l_n) $, where $ vC_n $ acts on $ \Xi^n $ via the homomorphism $ vC_n \rightarrow S_n $.
\end{thm}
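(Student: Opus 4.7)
The plan is to construct the $C_n$- and $S_n$-actions on the disjoint union $\bigsqcup_{\ul\in\Xi^n}\CL(\l_1)\times\cdots\times\CL(\l_n)$ separately, and then verify the single mixing relation that defines $vC_n$. For the $C_n$-piece I invoke the main result of \cite{HK}: the commutors $\sigma$ in $\cC$ generate an action of $C_n$ on the tensor product $L(\l_1)\otimes\cdots\otimes L(\l_n)$ (for any chosen bracketing), with $s_{ij}$ reversing the block $[i,j]$ of tensor factors. Transporting this action through the faithful monoidal functor $\Phi$, and using the structure bijections $\phi$ to identify $\Phi(L(\l_1)\otimes\cdots\otimes L(\l_n))$ with $\CL(\l_1)\times\cdots\times\CL(\l_n)$, produces an action of $C_n$ on the disjoint union covering the projection $C_n\to S_n$. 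For the $S_n$-piece, the category $\set$ is symmetric monoidal via the flip, and naive permutations of Cartesian factors assemble into an action of $S_n$ on the disjoint union, permuting the indexing tuples $\ul\in\Xi^n$.

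These two actions together give a well-defined action of the free product $C_n\ast S_n$, so what remains is the relation $ws_{ij}w^{-1}=s_{w(i)w(j)}$ whenever $w\in S_n$ is a translation on $[i,j]$. The key observation is that such a $w$ moves the contiguous interval $[i,j]$ rigidly (preserving internal order) to the contiguous interval $[w(i),w(j)]$, and permutes the remaining positions among themselves. I would rebracket the tensor product so that the block $Y:=L(\l_i)\otimes\cdots\otimes L(\l_j)$ becomes a single tensor factor; then $s_{ij}$ corresponds to the commutor that internally reverses $Y$, while $s_{w(i)w(j)}$ corresponds to the analogous commutor with $Y$ placed at positions $[w(i),w(j)]$. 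Because $w$ only relocates $Y$ as an indivisible whole, the naive $\set$-permutation associated with $w$ intertwines these two operations, which is exactly the desired relation.

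The main obstacle is coherence bookkeeping. One has to verify that the $C_n$-action of \cite{HK} on a fixed bracketing agrees, modulo associators, with the action on the rebracketed tensor product in which $Y$ is a single factor, and that naive permutations in $\set$ of the rebracketed tuple match naive permutations of the original $n$-tuple after unfolding $\Phi(Y)\cong\CL(\l_i)\times\cdots\times\CL(\l_j)$. Both compatibilities follow from the pentagon axiom together with the monoidal-functor hexagon (\ref{eq:hex1}) for $\phi$. Since $\Phi$ is faithful, every equation need only be checked at the $\set$ level, where the rigidity of translations makes the combinatorics transparent.
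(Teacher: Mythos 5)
Your proposal is correct and follows essentially the same route as the paper: build the $C_n$-action via the \cite{HK} commutor action transported through $\Phi$, let $S_n$ act by naive permutations of Cartesian factors, and verify the mixing relation $ws_{ij}w^{-1}=s_{w(i)w(j)}$ by regrouping $L(\l_i)\otimes\cdots\otimes L(\l_j)$ as a single factor which the translation $w$ moves rigidly (the paper phrases this via the cabled permutation $u\in S_{n-(j-i)}$ of Lemma \ref{le:cable} and naturality of its action, which is exactly your intertwining argument). The coherence bookkeeping you flag is handled the same way in the paper, implicitly through the identification $\CL(\l_1)\times\cdots\times\CL(\l_n)=\CL(\l_1)\times\cdots\times\Phi(X)\times\cdots\times\CL(\l_n)$.
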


\begin{proof}
From \cite{HK}, we have an action of $ C_n$ on $ \sqcup_{\ul} L(\l_1) \otimes \cdots \otimes L(\l_n) $.  Applying $ \Phi $ we get an action of $ C_n $ on $ \sqcup_{\ul \in \Xi^n} \CL(\l_1) \times \cdots \times \CL(\l_n) $.

On the other hand, there is an action of $ S_n $ on $  \sqcup_{\ul \in \Xi^n} \CL(\l_1) \times \cdots \times \CL(\l_n) $ by permutations.  We claim that together these actions generate an action of $ vC_n$.  To check this, let $ 1\le i , j \le n $ and let $ w $ be a translation on $ [i,j] $.  Consider $ \CL(\l_1) \times \cdots \times \CL(\l_n) $.  Let $ X = L(\l_i) \otimes \cdots \otimes L(\l_j)$.  We can write 
$$ \CL(\l_1) \times \cdots \times \CL(\l_n) = \CL(\l_1) \times \cdots \times \Phi(X) \times \cdots \CL(\l_n)  $$
Since $ w $ is a translation on $ [i,j] $, there exists $ u \in S_{n-(j-i)} $ (as above) such that the action of $ w $ on $\CL(\l_1) \times \cdots \times \CL(\l_n)$ is given by the action of $ u $ on $
 \CL(\l_1) \times \cdots \times X \times \cdots \CL(\l_n)$.  Now the action of $ s_{ij} $ on $ L(\l_1) \otimes \cdots \otimes L(\l_n) $ is given by $ 1 \otimes \sigma \otimes 1 $ where $ \sigma :  X \rightarrow X  $ is the long element of the cactus group acting on $ X $.  Hence by the naturality of $ u $, we see that $ u s_{ij} u^{-1} = 1 \times \Phi(\sigma) \times 1$.  But now since $ X $ lies in positions $ w(i), \dots, w(j) $, we see that $ w s_{ij} w^{-1} =  s_{w(i) w(j)} $ as desired.
 
\end{proof}

\section{Crystals}
\subsection{The category of normal crystals}
The main example of a concrete coboundary category is the category of crystals. 	Recall that we fixed a semisimple Lie algebra $ \fg $ with Cartan subalgebra $ \fh $.  Let $ \Lambda \subset \fh^* $ denote the weight lattice and let $ \Lambda_+ $ denote the set of dominant weights.  We fix root vectors $ e_\alpha, f_\alpha  $ for each $ \alpha \in \Delta_+ $.  For each $ \lambda \in \Lambda_+ $, we write $ V(\l) $ for the irreducible representation of $ \fg $ of highest weight $ \l $.
	
	A crystal $ B $ is a finite set which models a weight basis for a representation of $ \fg $.  A crystal has two structures: a weight map, which tells us in which weight space our basis vector lives, and partially defined maps $ e_i, f_i $ for $ i \in I $ which indicate the leading order behaviour of the simple root vectors $ e_{\alpha_i}, f_{\alpha_i} $ on the basis.  Here is the precise definition.
	
	\begin{defn}
		A $\fg$-\emph{crystal} is a finite set $ B $ along with a map $ wt : B \rightarrow \Lambda $ and maps $ e_i, f_i : B \rightarrow B \sqcup \{0\} $ for each $ i \in I $ satisfying the following conditions
		\begin{enumerate}
			\item For each $ i \in I, b \in B $, if $ e_i(b) \ne 0 $, then $ wt(e_i(b)) = wt(b) + \alpha_i $
			\item For each $ i \in I, b \in B $, if $ f_i(b) \ne 0 $, then $ wt(f_i(b)) = wt(b) - \alpha_i $
			\item For each $ i \in I, b \in B $, if $ e_i(b) \ne 0 $, then $ f_i(e_i(b)) = b $
			\item For each $ i \in I, b \in B $, if $ f_i(b) \ne 0 $, then $ e_i(f_i(b)) = b $
		\end{enumerate}
	\end{defn}
	
	
	If $ B_1, B_2 $ are two crystals, then their disjoint union $ B_1 \sqcup B_2 $ carries a crystal structure, where $ e_i, f_i $ are defined in the natural way.
	
	
	
	A morphism between two crystals $ B_1, B_2 $ is a map $ \psi : B_1 \rightarrow B_2 $ commuting with the structure maps $ wt, e_i, f_i $.  (This is sometimes called a strict morphism elsewhere in the literature.)
	
	
	
	There is a construction which assigns a crystal to each representation of $ \fg $ (in fact, there are several such constructions, but they all have the same output).  For each dominant weight $ \lambda $, we let $ B(\lambda) $ denote the crystal of the representation $ V(\l) $.
	
	\begin{defn}
	A crystal $ B $ is called \emph{normal} if it is isomorphic to a disjoint union of the crystals $B(\l) $ (equivalently if it is the crystal of a representation of $\fg$). We let $ \gcrys$ denote the category of normal $\fg $-crystals.
	\end{defn}
	
			
	
The following result is standard (see \cite[Cor 5.5]{HKRW}).
	\begin{thm}
	The category $ \gcrys$ is equivalent to the category  $ \Lset$ via the functor
	\begin{align*}
		\Lset &\rightarrow \gcrys\\
		(A_\lambda)_{\lambda \in \Lambda_+} &\mapsto \sqcu_\lambda A_\lambda \times B(\lambda)
	\end{align*}
	\end{thm}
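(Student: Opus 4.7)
The plan is to show that the functor $F \colon (A_\lambda) \mapsto \bigsqcup_\lambda A_\lambda \times B(\lambda)$ is essentially surjective and fully faithful. Essential surjectivity is immediate from the very definition of a normal crystal: every such crystal is, by definition, isomorphic to a disjoint union of copies of the $B(\lambda)$'s, hence lies in the essential image of $F$.

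The core of the argument is the Hom computation, which rests on a standard rigidity property of the irreducible crystals that I would cite from the theory of crystal bases rather than reprove: $B(\lambda)$ contains a unique element $b_\lambda$ annihilated by all $e_i$, its weight is $\lambda$, and every other element is obtained from $b_\lambda$ by iterated application of the $f_i$'s. Granted this, any crystal morphism $\psi : B(\lambda) \to B'$ into an arbitrary normal crystal $B'$ is uniquely determined by $\psi(b_\lambda)$ (because $\psi$ commutes with the $f_i$'s, which generate all of $B(\lambda)$), and $\psi(b_\lambda)$ must itself be a highest-weight element of $B'$ of weight $\lambda$. Applied to $B' = \bigsqcup_\mu A'_\mu \times B(\mu)$, the highest-weight elements of weight $\lambda$ in $B'$ are exactly the pairs $(a, b_\lambda)$ with $a \in A'_\lambda$, so one obtains a canonical bijection $\Hom_{\gcrys}(B(\lambda), B') \cong A'_\lambda$.

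Decomposing the source $F(A_\bullet)$ into its connected pieces, one then reads off
\[
\Hom_{\gcrys}\bigl(F(A_\bullet),\, F(A'_\bullet)\bigr) \;\cong\; \prod_\lambda \prod_{a \in A_\lambda} A'_\lambda \;\cong\; \prod_\lambda \Hom_{\set}(A_\lambda, A'_\lambda) \;=\; \Hom_{\Lset}(A_\bullet, A'_\bullet),
\]
and a short unwinding of definitions confirms that this bijection is precisely the map induced by $F$ on Hom-sets; combined with essential surjectivity, this gives the claimed equivalence. The only nontrivial ingredient is the rigidity of $B(\lambda)$ (uniqueness of the highest weight element and its role as a generator under the $f_i$'s), but since this is a standard consequence of the construction of crystal bases of irreducible representations, I do not expect a serious obstacle.
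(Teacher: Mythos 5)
Your proof is correct: essential surjectivity is definitional for normal crystals, and the Hom computation via the unique highest-weight element of $B(\lambda)$ (which generates the crystal under the $f_i$, so a strict morphism is pinned down by, and its existence reduced to, the choice of a highest-weight element of weight $\lambda$ in the target) is exactly the standard argument. The paper gives no proof of this statement -- it cites it as standard from HKRW, Cor.~5.5, where the same component-by-component/highest-weight-element argument appears -- so your proposal matches the intended proof; the only step worth stating explicitly is the easy converse that every highest-weight pair $(a,b_\lambda)$ does arise from the morphism $b\mapsto(a,b)$, which makes your correspondence $\Hom_{\gcrys}(B(\lambda),B')\cong A'_\lambda$ a genuine bijection rather than just an injection.
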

	
	\subsection{Tensor products}
	\begin{defn} \label{Def:Tensor}
	If $ B_1, B_2 $ are two crystals, then we define $ B_1 \otimes B_2 $ to be the crystal whose underlying set is $ B_1 \times B_2 $ and whose structure maps are defined by
	\begin{align*}
		wt(b_1, b_2) &= wt(b_1) + wt(b_2)\\
		e_i(b_1, b_2) &= \begin{cases} (e_i(b_1),b_2), & \; \text{if} \; \varepsilon_i(b_1) > \varphi_i(b_2) \\
			(b_1,e_i(b_2)), & \; \text{otherwise}
		\end{cases} \\
		f_i(b_1, b_2) &= \begin{cases} (f_i(b_1),b_2), & \; \text{if} \; \varepsilon(b_1) \geq \varphi(b_2) \\
			(b_1,f_i(b_2)), & \; \text{otherwise}
		\end{cases}
	\end{align*}
	\end{defn}

	The following results are well-known, see \cite{HK}.
	\begin{prop}
	\begin{enumerate}
	\item The tensor product of normal crystals is again a normal crystal and moreover the decomposition of $ B(\l_1) \otimes B(\l_2) $ matches that of $V(\l_1) \otimes V(\l_2) $.
	\item The tensor product of crystals is ``associative'' in the sense that the obvious map
	\begin{align*}
		\alpha : (B_1 \otimes B_2) \otimes B_3 &\rightarrow  B_1 \otimes (B_2 \otimes B_3) \\
		((b_1, b_2), b_3) &\mapsto (b_1, (b_2, b_3))
	\end{align*}
	is an isomorphism of crystals.  This associator obviously satisfies the pentagon axiom.
    \item The category of normal crystals carries a commutor defined using the Schutzenberger involution.  
	\end{enumerate}
	Thus $ \gcrys$ is a coboundary category.
	\end{prop}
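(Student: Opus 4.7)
The plan is to establish the three parts separately, with the main substance residing in part (1); parts (2) and (3) follow from standard constructions and direct verification. The key external input is Kashiwara's theory of crystal bases for the quantized enveloping algebra $U_q(\fg)$, which translates the monoidal structure on representations at $q\to 0$ into the combinatorial tensor rule of Definition~\ref{Def:Tensor}.

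For part (1), I would invoke Kashiwara's tensor product theorem. To each finite-dimensional $U_q(\fg)$-module $V$ one associates a crystal basis $(L,B)$ at $q=0$, and when $V = V(\lambda)$ the resulting $\fg$-crystal is $B(\lambda)$. Kashiwara's theorem asserts that if $(L_i,B_i)$ is a crystal basis of $V_i$ for $i=1,2$, then $(L_1\otimes L_2, B_1\otimes B_2)$ is a crystal basis of $V_1\otimes V_2$, where the crystal structure on $B_1\otimes B_2$ is exactly that of Definition~\ref{Def:Tensor}. Applying this with $V_i=V(\lambda_i)$ and using the decomposition of $V(\lambda_1)\otimes V(\lambda_2)$ into irreducibles yields the decomposition of $B(\lambda_1)\otimes B(\lambda_2)$ into $B(\mu)$'s with multiplicities matching those in the representation-theoretic tensor product. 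In particular $B(\lambda_1)\otimes B(\lambda_2)$ is again a normal crystal.

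For part (2), the map $\alpha$ is literally the identity on underlying sets, so it suffices to check that $\alpha$ intertwines the maps $e_i$ and $f_i$. A direct case analysis comparing $\varepsilon_i$ and $\varphi_i$ on the three tensor factors produces the usual ``signature rule'' for $B_1\otimes B_2\otimes B_3$, from which associativity is manifest. The pentagon then holds trivially, since every associator is the identity on underlying sets. For part (3), I would define the Schutzenberger involution $\xi_\lambda : B(\lambda)\to B(\lambda)$ as the unique bijection sending the highest weight vertex to the lowest weight vertex and intertwining $e_i$ with $f_{i^*}$, where $i\mapsto i^*$ is the Dynkin involution induced by $-w_0$. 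Extending by disjoint unions yields an involution $\xi_B$ on any normal crystal. Following \cite{HK}, the commutor $\sigma_{B_1,B_2}:B_1\otimes B_2\to B_2\otimes B_1$ is defined via $\xi$ on both sides of the tensor product. Verifying the involutivity axiom and the cactus hexagon is then a direct computation carried out in \cite{HK}.

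The main obstacle is part (1): showing that the combinatorial tensor product of two normal crystals is again normal with the correct multiplicities. A purely combinatorial proof (via, for instance, Stembridge's local axioms, or via Littelmann paths) is substantially more involved than the quantum-group route, since it requires producing a highest weight element of $B(\mu)$ inside $B(\lambda_1)\otimes B(\lambda_2)$ for each Littlewood--Richardson multiplicity and showing no spurious components arise. In practice, the cleanest path is to treat Kashiwara's tensor product theorem as a black box, which is the standard approach and the one implicit in the citation to \cite{HK}.
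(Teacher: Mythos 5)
Your proposal is correct and matches the paper's treatment: the paper gives no proof, simply citing these facts as well known (Kashiwara's tensor product theorem for normality and the decomposition, triviality of the associator, and the Sch\"utzenberger-involution commutor of \cite{HK}), which is exactly the route you take. Nothing further is needed.
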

	
	Because the associator for $ \gcrys $ is so simple, we will write multiple tensor products of crystals without brackets.

On the other hand, by construction $ \gcrys$ is a concrete category and thus $ \gcrys $ is a $ \Xi$-coloured concrete coboundary category.  From Theorem \ref{th:vCnact}, we immediately deduce the following result, where $ \mathcal B(\l)$ denotes the underlying set of the crystal $ B(\l)$.

\begin{cor}
    There is an action of $ vC_n $ on $ \sqcup_{\ul} \mathcal B(\l_1) \times \cdots \times \mathcal B(\l_n)$.  The subgroup $S_n \subset vC_n$ acts by ordinary permutations of factors, while $ C_n \subset vC_n $ acts by crystal commutors.
\end{cor}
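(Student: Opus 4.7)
The statement is essentially a direct specialization of Theorem \ref{th:vCnact}, so my plan is simply to check that the hypotheses of that theorem are in place for the category $\gcrys$ and then unpack what the theorem says in this case.

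First, I would take $\Xi = \Lambda_+$ and recall that the equivalence $\Lset \simeq \gcrys$ from the theorem above Definition~\ref{Def:Tensor} exhibits $\gcrys$ as a $\Lambda_+$-coloured category, with $L(\lambda) = B(\lambda)$. Second, I would note that the forgetful functor to sets (sending a crystal to its underlying set) is faithful and monoidal, since by Definition~\ref{Def:Tensor} the underlying set of $B_1 \otimes B_2$ is $B_1 \times B_2$; hence $\gcrys$ is concrete. Third, by the Proposition above, $\gcrys$ carries a coboundary structure via the Schützenberger commutor. Thus $\gcrys$ satisfies the hypotheses of Theorem \ref{th:vCnact}.

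Applying Theorem \ref{th:vCnact} directly produces the $vC_n$-action on $\sqcup_{\ul} \Phi(L(\l_1)) \times \cdots \times \Phi(L(\l_n)) = \sqcup_{\ul} \mathcal B(\l_1) \times \cdots \times \mathcal B(\l_n)$, in which the subgroup $C_n$ acts via the images under the forgetful functor of the commutors in $\gcrys$ (i.e.\ crystal commutors) and the subgroup $S_n$ acts by naive permutation of factors. This matches the description in the corollary verbatim.

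There is no real obstacle here: the only thing to verify is that the $C_n$-action coming from Theorem \ref{th:vCnact} coincides with the one induced by the crystal commutor, which is immediate from the construction (the $C_n$-action in Theorem \ref{th:vCnact} is defined as the image under $\Phi$ of the $C_n$-action on tensor products in $\cC$ from \cite{HK}, and here $\Phi$ is just forgetting the crystal data). Accordingly, the proof is a one-line invocation of Theorem \ref{th:vCnact}.
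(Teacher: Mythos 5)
Your proposal is correct and matches the paper's own argument: the paper likewise observes that $\gcrys$ is a $\Lambda_+$-coloured concrete coboundary category (via the equivalence with $\Lset$, the forgetful functor, and the Sch\"utzenberger commutor) and then deduces the corollary immediately from Theorem \ref{th:vCnact}. Your extra check that the $C_n$-action from the theorem is the crystal-commutor action is a reasonable, harmless elaboration of the same one-line invocation.
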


\subsection{An example}
The action of $ vC_n $ on $ \sqcup_{\ul} \mathcal B(\l_1) \times \cdots \times \mathcal B(\l_n) $ is highly non-trivial combinatorially.  We will illustrate this with an example.  Let us take $ \fg = \mathfrak{sl}_n $ and consider the action of $ vC_n$ on $ \mathcal B(\omega_1)^n$.  The action of $ vC_n $ preserves the weight function and so $ vC_n $ acts on the weight subset $ \mathcal B(\omega_1)^n_{(1,\dots, 1)}$.  We identify $ \mathcal B(\omega_1) = \{1, \dots, n\} $.  In this way, elements of $ \mathcal B(\omega_1)^n_{(1,\dots, 1)}$ are identified with sequences $ (a_1, \dots, a_n) $ which form a permutation of $ \{1, \dots, n \} $.  In other words, we have a bijection $ \mathcal B(\omega_1)^n_{(1,\dots, 1)} = S_n$.

Recall the celebrated RSK correspondence
$$
S_n \xrightarrow{\sim} \sqcup_{\l} Tab(\l) \times Tab(\l)
$$
where the union is taken over partitions of $ n $ and $ Tab(\l) $ denotes the set of standard Young tableaux of shape $ \l$.  We have a well-studied action of the cactus group $ C_n$ on $ Tab(\l) $, via partial Sch\"utzenberger involutions, originally due to Berenstein-Kirillov \cite{BK} (see also \cite{CGP}). In \cite{LR}, Sophia Liao and the second author proved that for all non-hook-shaped and non-self-transpose $\l$, the image of $ C_n$ in the permutation group of $ Tab(\l) $ determined by this action contains all even permutations of $ Tab(\l) $.

\begin{prop}
    The action of $ vC_n $ on $\mathcal B(\omega_1)^n_{(1,\dots, 1)} = S_n$ is characterized as follows.
    \begin{enumerate}
        \item The subgroup $ S_n \subset vC_n $  acts on $ S_n $ by left multiplication.
        \item The subgroup $ C_n \subset vC_n $ acts on $ S_n = \sqcup_{\l} Tab(\l) \times Tab(\l)$ by its action on the left factors of $ Tab(\l)$.
    \end{enumerate}
\end{prop}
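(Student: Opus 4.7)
Part (1) is essentially immediate from Theorem~\ref{th:vCnact}: the subgroup $S_n\subset vC_n$ acts on $\CL(\omega_1)\times\cdots\times\CL(\omega_1)=\{1,\dots,n\}^n$ by naive permutation of tensor factors, and the verification that this matches left multiplication under the one-line identification of sequences with permutations is direct.

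For part (2), the plan is to exploit the well-known compatibility of RSK with the $\fg$-crystal structure. For $\fg=\fsl_n$, the RSK correspondence yields an isomorphism of $\fg$-crystals
\[
\mathcal B(\omega_1)^{\otimes n}\;\cong\;\bigsqcup_\lambda \mathcal B(\lambda)\times Tab(\lambda),
\]
in which the crystal structure acts only on the first factor via the usual SSYT crystal, while the $Q$-tableau indexes isotypic components. Restricting to the weight $(1,\dots,1)$ subspace recovers the classical RSK bijection $S_n=\sqcup_\lambda Tab(\lambda)\times Tab(\lambda)$ with both factors now standard Young tableaux. Since the crystal commutor is a natural isomorphism of $\fg$-crystals, it preserves the isotypic decomposition and in particular fixes the $Q$-tableau; hence $C_n$ acts only on the $P$-factor, and the task reduces to identifying this action with the Berenstein--Kirillov action on $Tab(\lambda)$.

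To do this, I would use that, as in the proof of Theorem~\ref{th:vCnact}, the generator $s_{i,j}$ acts on $\mathcal B(\omega_1)^{\otimes n}$ as $1\otimes \sigma_X\otimes 1$, where $\sigma_X$ is the long element of the cactus group on $X=\mathcal B(\omega_1)^{\otimes(j-i+1)}$ placed in positions $i,\dots,j$; by construction $\sigma_X$ is the Sch\"utzenberger involution on $X$. Translating through RSK, this Sch\"utzenberger involution corresponds to the partial Sch\"utzenberger involution on the sub-tableau of entries $\{i,\dots,j\}$ of the $P$-tableau, which is exactly how Berenstein--Kirillov define the $C_n$-action on $Tab(\lambda)$ (cf.\ \cite{BK,CGP,HKRW}). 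The main obstacle is this last identification --- matching the categorical commutor on a standard crystal with the combinatorial partial Sch\"utzenberger on $P$ --- but it is well understood in the literature and follows by unwinding the definitions, using that the Sch\"utzenberger involution on a tensor product of standard crystals agrees with the Sch\"utzenberger involution on the common $P$-tableau.
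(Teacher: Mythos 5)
There is a genuine gap: the key mechanism in your part (2) is exactly backwards. Each cactus generator $s_{ij}$ acts on $\mathcal B(\omega_1)^{\otimes n}$ by $1\otimes\sigma\otimes 1$ with $\sigma$ a crystal isomorphism, hence by a crystal \emph{automorphism} of $\mathcal B(\omega_1)^{\otimes n}$. Since each connected normal crystal $B(\lambda)$ has no nontrivial automorphisms, such an automorphism is forced to act trivially \emph{inside} each connected component and can only permute the components. Under RSK for words, the crystal operators fix the recording tableau $Q$ and move the insertion tableau $P$; so components correspond to $Q$ and the position within a component corresponds to $P$. Consequently the commutor action \emph{fixes} $P$ and acts on $Q$ --- the opposite of your claim that ``the commutor preserves the isotypic decomposition and in particular fixes the $Q$-tableau, hence acts on the $P$-factor'' (preserving the isotypic decomposition does not mean fixing individual components). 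A small check: for $\fsl_3$, $s_{13}$ sends the word $132$ to $312$; these have the same $P=\{(1,1)\!\mapsto\!1,(1,2)\!\mapsto\!2,(2,1)\!\mapsto\!3\}$ but different (evacuation-related) $Q$'s. Your supporting claim that the Schutzenberger involution on the tensor product ``agrees with the Schutzenberger involution on the common $P$-tableau'' is a property of the inner involution $\xi$ alone, not of the commutor, which is a composition $\xi\circ(\xi\otimes\cdots\otimes\xi)\circ\mathrm{rev}$ whose net effect is on $Q$.

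The literal statement of the proposition survives only because of a compensating convention issue you also gloss over in part (1): under the one-line identification you use, naive permutation of factors is $\sigma\mapsto\sigma w^{-1}$, not left multiplication; to get literal left multiplication one must identify a sequence with the \emph{inverse} of its one-line permutation, and inverting a permutation swaps $P$ and $Q$ under RSK. With that identification, ``left factor'' means the recording tableau of the word, which is indeed where the $C_n$-action lives; with your identification, both (1) and (2) as you state them are false. So the two slips cancel in the conclusion but the argument as written does not prove the statement. The correct route is: use rigidity of $B(\lambda)$ to see that the commutor action factors through the set of components (equivalently highest-weight elements, i.e.\ $Q$-tableaux), and then invoke the identification of this action with the Berenstein--Kirillov partial evacuations --- this last identification is precisely what the paper cites (Theorem 5.13 of \cite{Hal}; see also \cite{BK,CGP}), and it is the crux rather than a routine unwinding; note also that the paper's own proof of (2) is exactly this citation, while (1) is taken as immediate.
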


\begin{proof}
    The first statement is clear by construction.  The second statement follows from Theorem 5.13 of \cite{Hal}.
\end{proof}

	\section{Moduli space of cactus flower curves}

\subsection{Moduli spaces}
Let $ M_{n+1} = (\BP^1)^{n+1} \setminus \Delta / PGL_2 $ be the configuration space of $ n+1 $ points on $ \BP^1$ modulo automorphisms.  By fixing the last marked point (labelled by $ n+1$) to be $ \infty$, we have $ M_{n+1} = \C^n \setminus \Delta / \Cx \ltimes \C$, the configuration space of $ n $ points on the affine line, modulo scaling and translation.  There is a natural Deligne-Mumford compactification $ \overline M_{n+1} $ of $ M_{n+1}$.  The points of $ \overline M_{n+1} $ are stable, nodal, genus 0 curves with $ n+1 $ marked points; we call these \textbf{cactus curves}.

For each triple $ i, j, k $ of distinct elements of $ \{1, \dots, n\} $ and any point $ (z_1, \dots, z_n) \in M_{n+1}$, we can consider the ratio $ \frac{ z_i - z_k }{z_i - z_j}$.  This defines a function $  \mu_{ijk} : M_{n+1} \rightarrow \Cx$.  These extend to well-defined functions $ \mu_{ijk} : \overline M_{n+1} \rightarrow \BP^1$.  These functions satisfy some standard relations and determine an embedding of $ \overline M_{n+1}$ into a product of projective lines (see \cite[\S 4.1]{IKLPR} for more details).

Let $ F_n = \C^n \setminus \Delta / \C $, the set of $ n $ distinct points on an affine line modulo simultaneous translation.  In \cite[\S 6]{IKLPR}, we constructed a compactification of this space, which we call the cactus flower moduli space $\overline F_n $.  For each $ i < j $, the space $ F_n $ has a function $ \delta_{ij} : F_n \rightarrow \Cx $ defined by $ \delta_{ij}(z) = z_i - z_j $.  By the construction of $ \overline F_n $, these functions extend to $ \delta_{ij} : \overline F_n \rightarrow \BP^1 $.

These functions $ \delta_{ij}$ are not enough to specify a point of $ \overline F_n $.  For this we need the $ \mu$ coordinates as well.  More precisely, the space $ \overline F_n $ is covered by affine open sets $ \tilde U_\CS $ indexed by set partitions on $ \CS $ of $ \{1, \dots, n\}$.  On each such affine open set $ \tilde U_\CS$, some of the $ \mu_{ijk} $ coordinates are well defined --- those for which $ i,j,k$ all lie in the same part of the set partition $ \CS $ (see \cite[\S 6.1]{IKLPR}).

The definition of $ \overline F_n$ leads to the following result \cite[Prop 6.8]{IKLPR}.
\begin{thm} \label{th:infty}
\begin{enumerate}
\item  The locus $$ \{ z \in \overline F_n : \delta_{ij}(z) = \infty, \text{ for all } i, j \} $$ is a single point, denoted $ \infty \in \overline F_n $ and called the maximal flower point. 
\item The locus $$ \{ z \in \overline F_n : \delta_{ij}(z) = 0, \text{ for all } i, j \} $$ is a divisor, isomorphic to $ \overline M_{n+1} $. 
\end{enumerate}
\end{thm}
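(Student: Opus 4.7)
The approach is to work chart by chart using the affine open cover $\{\tilde U_\CS\}$ of $\overline F_n$ recalled immediately before the theorem, where $\CS$ runs over set partitions of $\{1,\dots,n\}$. Both loci in the theorem are closed subschemes cut out by conditions on the coordinate functions $\delta_{ij}$ and $\mu_{ijk}$, so the plan is to identify, for each part, the single chart in which the locus lives and then read off its scheme structure.

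For part (1), I would locate the locus inside $\tilde U_\CS$ for the finest partition $\CS=\{\{1\},\dots,\{n\}\}$. In this chart no $\mu_{ijk}$ is defined, since no block contains three elements, so the chart is controlled purely by the $\delta_{ij}$. Near the point where all $\delta_{ij}=\infty$, the reciprocals $\delta_{ij}^{-1}$ are regular coordinates, and their simultaneous vanishing cuts out a single reduced point. To finish, one checks that no other chart contributes: in $\tilde U_\CS$ for any coarser partition, some pair $i,j$ lies in a common block, and the construction of \cite[\S 6.1]{IKLPR} forces $\delta_{ij}$ to be finite there.

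For part (2), the plan is dual: place the locus in the chart $\tilde U_\CS$ for the coarsest partition $\CS=\{\{1,\dots,n\}\}$. In this chart every $\mu_{ijk}$ is a well-defined function $\overline F_n\to\BP^1$, and these functions together with their standard relations cut out $\overline M_{n+1}$ as a closed subvariety of a product of projective lines, as in \cite[\S 4.1]{IKLPR}. Imposing $\delta_{ij}=0$ for all $i,j$ is a codimension-one condition in this chart, because the $\delta_{ij}$ satisfy $\delta_{ij}+\delta_{jk}=\delta_{ik}$ and so are all proportional to a single regular scale parameter. The residual variety is then carved out by exactly the $\mu_{ijk}$-relations, hence is canonically identified with $\overline M_{n+1}$. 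Again one must rule out contributions from other charts: in any $\tilde U_\CS$ with $\CS$ strictly finer than the one-block partition, there is a pair $i,j$ in different blocks, for which the construction forces $\delta_{ij}$ to be nonzero or infinite.

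The main obstacle is the chart-level bookkeeping: recording precisely which $\delta_{ij}$ and which $\mu_{ijk}$ are regular in each $\tilde U_\CS$, which are forced to be $0$ or $\infty$, and which are free. Once this is done, the gluing from \cite[\S 6.1]{IKLPR} guarantees that the two loci have been completely described. The isomorphism in (2) is then natural, since by construction the $\mu_{ijk}$ on $\overline F_n$ restrict to the standard $\mu$-coordinates on the Deligne–Mumford compactification along this boundary divisor.
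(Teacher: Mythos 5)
This theorem is not actually proved in the present paper: it is quoted from \cite[Prop.\ 6.8]{IKLPR} ("the definition of $\overline F_n$ leads to the following result"), so the only thing to compare your argument with is the chart description of $\overline F_n$ that the paper recalls. Your plan --- locate each locus in the unique chart $\tilde U_\CS$ that can contain it and read it off from the coordinates --- is the natural verification, and for part (1) it works: on the chart of the all-singletons partition the coordinate ring is generated by the $\nu_{ij}=\delta_{ij}^{-1}$ (compare the explicit description (\ref{eq:Un}) at $\varepsilon=0$), so imposing $\nu_{ij}=0$ for all $i,j$ gives at most the reduced origin, and the forcing claim that $\delta_{ij}\neq\infty$ whenever $i,j$ lie in a common block of any other partition rules out the remaining charts. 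That forcing claim, like all the "bookkeeping" you defer, is part of the construction in \cite[\S 6.1]{IKLPR}, which is acceptable given that the statement itself is cited from there.

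Part (2), however, contains a genuine misstep. The inference ``the $\delta_{ij}$ satisfy $\delta_{ij}+\delta_{jk}=\delta_{ik}$ and so are all proportional to a single regular scale parameter'' is a non sequitur: the additive cocycle relation already holds on the open locus $F_n$, where the $\delta_{ij}=z_i-z_j$ are certainly not proportional to one parameter (they span an $(n-1)$-dimensional space of functions). What actually ties the $\delta$'s together on the one-block chart is the multiplicative relation $\delta_{ik}=\mu_{ijk}\,\delta_{ij}$ (since $\mu_{ijk}=\delta_{ik}/\delta_{ij}$ on $F_n$), and because $\mu_{ijk}$ is only $\BP^1$-valued, no single $\delta_{ij}$ cuts out the locus globally --- one must argue locally, choosing near each boundary point a $\delta$ that divides the others. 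Likewise, the final step ``the residual variety is carved out by exactly the $\mu_{ijk}$-relations, hence canonically $\overline M_{n+1}$'' is precisely the content one has to extract from the defining equations of $\overline F_n$ on that chart (both that every point of the locus satisfies the $\overline M_{n+1}$-relations and that every point of $\overline M_{n+1}$ occurs); as written it restates the claim rather than proving it. So part (1) is essentially complete modulo the chart facts you cite, but part (2) does not yet establish either that the locus is a divisor or the isomorphism with $\overline M_{n+1}$: replace the additive-relation argument by the relation $\delta_{ik}=\mu_{ijk}\delta_{ij}$ together with the precise equations of the one-block chart from \cite{IKLPR}.
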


The spaces $ \overline M_{n+1} $ and $ \overline F_n $ come equipped with natural actions of $ S_n $ permuting the labels of the marked points (for $ \overline M_{n+1}$, we only permute the first $ n $ marked points).  Note that $ \dim \overline F_n = n -1 $ and $ \dim \overline M_{n+1} = n-2$.


\subsection{Real loci and their fundamental groups}

We will be particularly interested in the real loci $ \overline F_n(\BR), \overline M_{n+1}(\BR) $ of these moduli spaces, and their equivariant fundamental groups.  Recall for any reasonable topological space $ X $ with an action of a finite group $ G $ and a chosen basepoint $ x \in X$, the equivariant fundamental group $ \pi_1^G(X, x) $ is defined as the set of pairs $(g,p)$ where $ g \in G $ and $p $ is a homotopy class of path from $ x$ to $ g x $.  Multiplication in this group is given by $ (g_1, p_1)(g_2, p_2) = (g_1 g_2, p_1 * g_1(p_2))$ where $ * $ denotes concatenation of paths (from left to right).

There is a group homomorphism $ \pi_1^G(X,x) \rightarrow G $ taking $ (g,p) $ to $ g $.  If $ X $ is path connected, we have a short exact sequence
$$
1 \rightarrow \pi_1(X,x) \rightarrow \pi_1^G(X,x) \rightarrow G \rightarrow 1
$$

Suppose that $ Y \subset X $ is stable under the action of $ G $.  Let $ y \in Y $ be a basepoint. Let $ p_y^x $ be a path from $ y $ to $ x$ in $ X$.   Once such a path is fixed, we will adopt the convention that $ p_{x}^{y} $ denotes the same path in the opposite direction.

Then we get a group homomorphism, 
\begin{equation} \label{eq:pi1map} \widehat{p_y^x} : \pi_1^G(Y, y) \rightarrow  \pi_1^G(X, x) \quad (g,p) \mapsto (g, p_x^y * p * g(p_y^x) ) \end{equation}
Note that $ \widehat{p_y^x} $ only depends on the homotopy class (with fixed endpoints) of $ p_y^x$. 

 We now specify a basepoint $ y \in \overline M_{n+1}(\BR) $, as follows. We define $ y = (1, 2,\dots, n) \in \R^n $; we also use $ y $ for its image in $ \overline M_{n+1}(\BR)$.  At this point $ y $, the standard coordinates $ \mu_{ijk} $ are given by $ \mu_{ijk}^0 := \frac{i -k}{i-j} $.

We have the following results; the first is due to Davis-Januszkiewicz-Scott \cite{DJS} and the second we proved in \cite[Thm 11.11]{IKLPR}.
\begin{thm} \label{th:pi1}
\begin{enumerate}
\item 
 There is an isomorphism $ \pi_1^{S_n}(\overline M_{n+1}(\BR), y) \cong C_n $.
    \item 
 There is an isomorphism $ \pi_1^{S_n}(\overline F_n(\BR), \infty) \cong vC_n $.
\end{enumerate}
\end{thm}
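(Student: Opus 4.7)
The plan is to prove both parts in parallel using the Davis-Januszkiewicz-Scott strategy: realize each real moduli space as a non-positively curved (NPC) cube complex, deduce asphericity from Gromov's link condition, and then read off generators and relations for the equivariant fundamental group directly from the combinatorics of codimension-$1$ and codimension-$2$ faces. For $\overline M_{n+1}(\BR)$ the underlying cubulation is the classical one of \cite{DJS} by associahedra; for $\overline F_n(\BR)$ it is the cubulation constructed in \cite[\S 9]{IKLPR}, whose cubes are indexed by compatible families of brackets refined by the flower combinatorics at $\infty$. In both cases a direct check that the link of every vertex is a flag complex (compatibility of disjoint or nested intervals, and, in the flower case, compatibility of nested set partitions) gives the CAT(0) condition on the universal cover, hence asphericity.

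Given asphericity, the standard dictionary expresses $\pi_1^G$ by generators of two types --- one per $G$-orbit of hyperplane (``crossing'' generators), plus loops coming from elements of $G$ fixing the basepoint's orbit --- with relations coming from codimension-$2$ squares. For part~(1), the $S_n$-orbits of hyperplanes in $\overline M_{n+1}(\BR)$ are indexed by intervals $[i,j] \subsetneq [1,n]$ with $1 \le i < j \le n$, giving the generators $s_{ij}$; there are no contributions of the second kind because $S_n$ acts without global fixed points on the basepoint's orbit in the relevant way. The three types of codimension-$2$ squares --- two disjoint intervals, nested intervals, and overlapping intervals (which reduce to nested after applying the $S_n$-action by $w_{ij}$) --- yield precisely the three defining relations of $C_n$, recovering the Davis-Januszkiewicz-Scott presentation.

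For part~(2), the analogous analysis on $\overline F_n(\BR)$ gives hyperplane orbits of two kinds: ``collision'' hyperplanes yielding the cactus generators $s_{ij}$ as before, and ``flower'' hyperplanes bounding strata near the maximal flower point $\infty$ of Theorem~\ref{th:infty}, which do not add new crossing generators but do contribute new codimension-$2$ squares. The symmetric group generators come from loops of the second type: the flower point $\infty$ is fixed by all of $S_n$, so paths winding near $\infty$ contribute a full copy of $S_n$ to $\pi_1^{S_n}(\overline F_n(\BR),\infty)$. Mixed codimension-$2$ squares, each bordered by one collision hyperplane and one flower hyperplane, then enforce the relation $w s_{ij} w^{-1} = s_{w(i)w(j)}$ exactly when $w$ preserves the block structure of the collision near $\infty$, i.e.\ when $w$ is a translation on $[i,j]$ in the sense of Lemma~\ref{le:cable}. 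Assembling these yields the presentation of $vC_n$ from Section~\ref{se:vCn}.

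The main obstacle is the last step: verifying that the mixed codimension-$2$ faces in $\overline F_n(\BR)$ produce exactly the virtual cactus relations and nothing stronger. Concretely, one must show that a general $w \in S_n$ and $s_{ij}$ share a codimension-$2$ face only when $w$ is a translation on $[i,j]$, and in that case the induced relation is the expected conjugation. This requires an explicit local model of $\overline F_n(\BR)$ near the flower stratum at $\infty$, together with the cabling bijection of Lemma~\ref{le:cable} to convert between the $S_n$-labels on petals and the interval labels on collision hyperplanes. Once the local model is in place the remaining verifications are combinatorial and match the abstract presentation of $vC_n$ directly.
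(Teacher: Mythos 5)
First, note that the paper does not prove this theorem itself: part (1) is quoted from \cite{DJS} and part (2) from \cite[Thm 11.11]{IKLPR}, both of which indeed proceed via the cube-complex models ($H_n$ and $\widehat D_n$) that you invoke, so your overall strategy is the same family of argument as the cited sources. However, as a proof your sketch has concrete problems. In part (1), your claim that there are ``no contributions of the second kind'' is false: the stabilizer of the basepoint $y=(1,2,\dots,n)$ in $S_n$ is $\{1,w_{1n}\}$, since reversal is realized by the dilation $z\mapsto -z$ composed with a translation, and this stabilizer element is exactly where the generator $s_{1n}$ comes from --- there is no wall for the full interval because the corresponding ``all points bubble off'' stratum is unstable and does not exist in $\overline M_{n+1}$. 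Restricting to crossing generators for proper intervals $[i,j]\subsetneq[1,n]$ genuinely loses $s_{1n}$: killing all proper generators in $C_n$ leaves a $\BZ/2$ generated by the image of $s_{1n}$, so $s_{1n}$ is not even in the subgroup they generate, and your presentation would produce the wrong group. Relatedly, your matching of the three cactus relations to three types of codimension-$2$ squares is off: overlapping (non-nested, non-disjoint) intervals index boundary divisors that do not meet, so they contribute no square and no relation, while the involution relation $s_{ij}^2=1$ comes from the edge $\tau_{ij}$ being preserved (and flipped) by $w_{ij}$, i.e.\ from stabilizer data, not from a square.

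For part (2), the step you yourself flag as ``the main obstacle'' --- that the mixed codimension-$2$ faces impose exactly the conjugation relations $w s_{ij} w^{-1}=s_{w(i)w(j)}$ for $w$ a translation on $[i,j]$, and, just as importantly, that the resulting presentation is complete (no further relations among the $S_n$-part, the crossing generators, and the stabilizers of higher cells) --- is precisely the content of \cite[Thm 11.11]{IKLPR}, so deferring it means the theorem is not proved. In addition, your route through non-positive curvature is both unverified and unnecessary: you assert the flag-link condition for the flower cube complex $\widehat D_n$ without checking it (asphericity of $\overline F_n(\BR)$ is nowhere used or claimed in this paper), whereas a presentation of $\pi_1^{S_n}$ only requires the $2$-skeleton of the quotient complex of groups together with the cell stabilizers. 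A corrected write-up should drop the CAT(0) detour, carefully record the stabilizers (all of $S_n$ at $\infty$, and $\{1,w_{1n}\}$ at $y$ in part (1)), enumerate the $S_n$-orbits of $1$- and $2$-cubes of $\widehat D_n$ adjacent to the flower vertex, and carry out the cabling bookkeeping of Lemma \ref{le:cable} to see that only translations on $[i,j]$ produce mixed squares --- at which point you would essentially be reproducing the argument of \cite{IKLPR}.
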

 
We now define a path $ p^{y}_{\infty}$ between the basepoint $ \infty \in \overline F_n(\BR)$  and the basepoint $ y \in \overline M_{n+1}(\R)$.  This path has domain $ [-\infty,0] $; this looks a bit strange, but this domain is homeomorphic to $[0,1]$.  We define $ p : [-\infty,0] \rightarrow \overline F_n(\BR)$ by $p(-\infty) = \infty$, $ p(0) = y$, and for $ t\in (-\infty,0)$, we define $ p(t) = (t, 2t, \dots, nt) \in F_n(\BR) $.

In terms of the coordinates $ \mu, \delta $ on $ \overline F_n $, we have $ \delta_{ij}(t) = t (i-j) $ and all $ \mu_{ijk}$ are constant and equal to $ \mu_{ijk}^0$ (where they are defined).  

 In \cite[\S 9]{IKLPR}, we constructed a cube complex $ \widehat D_n $ and a homeomorphism $ \widehat D_n \rightarrow \overline F_n(\BR) $.  This homeomorphism restricts to a homeomorphism $ H_n \rightarrow \overline M_{n+1}(\BR) $ where $H_n $ is a certain hyperplane of $ \widehat D_n $.

Under the homeomorphism between $ H_n $ and $ \Mr{n+1} $, the point $ y$ corresponds to the point in $ H_n $ labelled by the planar tree with one internal vertex and leaves occuring in the order $ 1, \dots, n $ (this is because $ y_1 < \dots < y_n $ and $ y_1 - y_2 = \dots = y_{n-1} - y_n$).  Moreover, under this same homeomorphism, the path $ p^{y}_\infty $ corresponds to traversing the sub-$1$-cube labelled by this same tree in $ \widehat D_n$; for this reason we will also call this tree $ p^{y}_\infty$.

The inclusion $ \overline M_{n+1} \rightarrow \overline F_n $ from Theorem \ref{th:infty}(2) together with the path $ p^{\infty}_{y}  $ defines by (\ref{eq:pi1map}) a group homomorphism  $ \widehat p^\infty_{y} : \pi_1^{S_n}(\overline M_{n+1}(\BR), y) \rightarrow \pi_1^{S_n}(\overline F_{n}(\BR), \infty)$.

\begin{thm} \label{th:pi12}
\begin{enumerate}
\item The $S_n$-fixed point $ \infty \in \overline F_n $ from Theorem \ref{th:infty}(1) induces the natural map $ S_n \rightarrow vC_n $ on equivariant fundamental groups.
\item Under the isomorphisms from Theorem \ref{th:pi1}, $ \widehat{ p_{y}^\infty} $ becomes the natural map $ C_n \rightarrow vC_n $.
\end{enumerate}
\end{thm}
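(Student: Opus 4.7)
The plan is to unpack the cube-complex description $\widehat D_n \cong \overline F_n(\BR)$ and the construction of the isomorphism $\pi_1^{S_n}(\overline F_n(\BR), \infty) \cong vC_n$ from \cite{IKLPR}, and then verify each assertion at the level of generators of $vC_n$.

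For part (1), since $\infty$ is $S_n$-fixed, each $g \in S_n$ determines a class in $\pi_1^{S_n}(\overline F_n(\BR), \infty)$ as the pair of $g$ with the constant path at $\infty$, and these classes make up the image of the natural map $S_n = \pi_1^{S_n}(\{\infty\}, \infty) \to \pi_1^{S_n}(\overline F_n(\BR), \infty)$. The plan is to verify directly from the construction of the isomorphism in Theorem \ref{th:pi1}(2) that these classes correspond to the $S_n$ free factor in the presentation $vC_n = (C_n * S_n)/(\text{relations})$. In the cube complex $\widehat D_n$, the vertex $\infty$ is the unique $S_n$-fixed vertex, labelled by the maximal-flower tree on leaves $1, \dots, n$, and the $S_n$-action permutes these leaves; by construction this is precisely how the $S_n$ free factor sits inside $vC_n$. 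Thus (1) should essentially be read off from the proof of Theorem \ref{th:pi1}(2).

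For part (2), the generator $s_{ij} \in C_n \cong \pi_1^{S_n}(\overline M_{n+1}(\BR), y)$ is represented by the equivariant loop $(w_{ij}, \gamma_{ij})$, where $\gamma_{ij}$ is the $1$-cube in $H_n \subset \widehat D_n$ from $y$ to $w_{ij}(y)$ corresponding to nesting a bracket around the leaves $i, \dots, j$ of the planar tree labelling $y$. By the definition of $\widehat{p^\infty_y}$ in (\ref{eq:pi1map}), its image in $\pi_1^{S_n}(\overline F_n(\BR), \infty)$ is the equivariant loop $(w_{ij},\, p^y_\infty * \gamma_{ij} * w_{ij}(p^\infty_y))$. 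On the other hand, the generator $s_{ij} \in vC_n$ corresponds, under Theorem \ref{th:pi1}(2), to the equivariant loop at $\infty$ given by the $1$-cube in $\widehat D_n$ emanating from $\infty$ and labelled by the planar tree whose flower vertex has children $1, \dots, i-1, [i, \dots, j], j+1, \dots, n$. The plan is to exhibit an explicit $2$-cube in $\widehat D_n$ that realises the equivariant homotopy between these two paths; this $2$-cube is indexed by the planar tree that simultaneously carries the nested bracket $[i, \dots, j]$ and the full linear refinement of all leaves at the flower vertex, so that opposite pairs of $1$-cubes on its boundary recover the two composite paths.

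The main obstacle is the combinatorial bookkeeping in the cube complex: one must carefully match the four $1$-cubes on the boundary of the claimed $2$-cube with the pieces of the two composite paths, including the $w_{ij}$-action on the right-hand piece, and check that all orientations agree. Because the cubes of $\widehat D_n$ are explicitly indexed by planar-tree data, this is a finite combinatorial verification. Combined with part (1), which handles the $S_n$-subgroup, this identifies $\widehat{p^\infty_y}$ with the natural inclusion $C_n \hookrightarrow vC_n$.
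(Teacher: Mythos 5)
Your proposal follows essentially the same route as the paper: part (1) is read off from the construction of the isomorphism in \cite{IKLPR}, and part (2) uses exactly the (half) 2-cube of $\widehat D_n$ labelled by the tree $\tau_{ij}$ (bracket $[i,\dots,j]$ plus the full linear refinement), whose boundary edges $p_y^\infty$, $\tau_{ij}$, $w_{ij}(p_y^\infty)$, $\tau_A$ realize the equivariant homotopy you describe. The bookkeeping you defer is handled in the paper by identifying $(1,\tau_A)$ with the pure generator $s_A$ and using $s_{ij}=s_A w_{ij}$ in $vC_n$, so your plan is sound and matches the paper's argument.
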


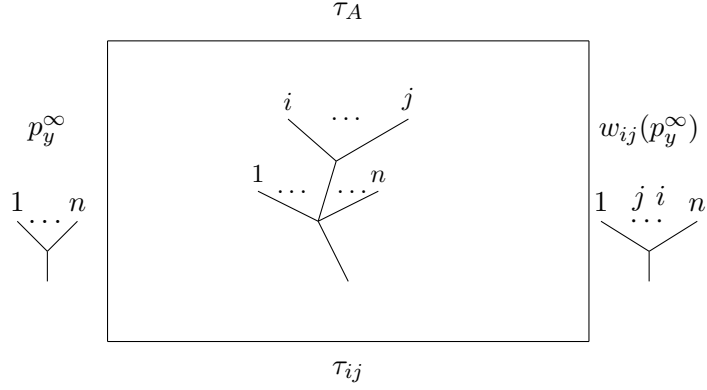
\begin{figure} 
\begin{tikzpicture}[scale=0.8,every node/.style={scale=1}]
	\draw (0,0) -- (8,0);
	\draw (8,0) -- (8,5);
	\draw (0,5) -- (8,5);
	\draw (0,0) -- (0,5);
        \node at (4,-0.5) {$\tau_{ij}$};

        \node at (-1, 3.5) {$p_y^\infty$};

        \draw (-1, 1) -- (-1, 1.5);
        \draw (-1, 1.5) -- (-1.5, 2) node[above] {1};
        \draw (-1, 1.5) -- (-0.5, 2) node[above] {$n$};
        \node at (-1, 2) {$\dots$};

        \node at (9, 3.5) {$ w_{ij}(p_y^\infty)$};
        \draw (9, 1) -- (9, 1.5);
        \draw (9, 1.5) -- (8.2, 2) node[above] {1};
        \draw (9, 1.5) -- (9.8, 2) node[above] {$n$};
        \node at (9, 2) {$\cdots $};
        \node at (9,2) [above] {$ j \ i$};
        
        \node at (4,5.5) {$\tau_A$};

        \begin{scope}[thin,scale=1,every node/.style={scale=0.9}]
	\draw (4,1) -- (3.5,2);
        \draw (3.5,2) -- (2.5,2.5) node[above] {1};
        \draw (3.5,2) -- (4.5,2.5) node[above] {$n$};
        \draw (3.5,2) -- (3.8, 3) ;
        \draw (3.8,3) -- (3, 3.7) node[above] {$i$};
        \draw (3.8,3) -- (5, 3.7) node[above] {$j$};
        \node at (4,3.7) {$\dots$};
        \node at (3.1,2.5) {$\dots$};
        \node at (4.1,2.5) {$\dots$};
        \end{scope}	
\end{tikzpicture} 
\caption{A half 2-cube in $ \widehat D_n$.  The bottom edge of this cube lives in the hyperplane $ H_n$; the labels of its cubes coincide with the labels on the cubes above them.} 
\label{fig:2cube}
\end{figure}

\begin{proof}
    (1) follows immediately from construction of the isomorphism in \cite[Lemma 11.5]{IKLPR}.

    For (2), consider a generator $ s_{ij} $ of $ C_n$.  By the definition of the isomorphism in Theorem \ref{th:pi1}(1), this generator corresponds to the pair $ (w_{ij}, \tau_{ij}) $ where $ \tau_{ij} $ is a $1$-cube inside $ H_n$.  Specifically, $\tau_{ij}$ is the $1$-cube of $H_n$ labelled by the planar tree $\tau_{ij}$ with two internal vertices $ v_1, v_2$, where $ v_1 $ is connected to the leaves $ 1, \dots, i -1$, the vertex $v_2$ and then the leaves $ j+1, \dots, n$, and where $ v_2 $ is connected to leaves $ i, \dots, j$.  If we consider the half 2-cube of $\widehat D_n $ labelled by this same planar tree, we see that its boundary consists of the 1-cubes labelled by $ p_y^\infty, \tau_{ij}, w_{ij}(p_y^\infty), \tau_A $ where $\tau_A $ is the planar forest obtained by deleting the trunk of $ \tau_{ij} $ (here $ A $ is the ordered subset $ (i, \dots, j)$ of $ \{1,\dots, n\}$), as shown in Figure \ref{fig:2cube}.  Hence inside $\pi_1^{S_n}(\widehat D_n)$, we have an equality $ (w_{ij}, p^{rev} * \tau_{ij} * w_{ij}(p)) = (w_{ij}, \tau_A) $. Under the isomorphism $ \pi_1^{S_n}(\widehat D_n) \cong vC_n $ of \cite[Lemma 11.5]{IKLPR}, we see that $ (1, \tau_A)$ is mapped to the generator $ s_A $ of the pure virtual cactus group.  By the definition of $ s_A $ given in \cite[\S 10.7]{IKLPR}, this implies that $ (w_{ij}, \tau_A) $ is mapped to $ s_{ij} \in vC_n$.  To summarize, under the composition
$$C_n \xrightarrow{\cong} \pi_1^{S_n}(\overline M_{n+1}(\BR), y) \xrightarrow{\widehat p_\infty^y} \pi_1^{S_n}(\overline F_{n}(\BR), \infty) \xrightarrow{\cong} vC_n$$
we have
$$ s_{ij} \mapsto (w_{ij}, \tau_{ij}) \mapsto (w_{ij}, (p_y^\infty)^{rev} * \tau_{ij} * w_{ij}(p_y^\infty)) = (w_{ij}, \tau_A) \mapsto s_A w_{ij} = s_{ij} $$
Thus the generators of $ C_n $ are sent to the same-named generators of $vC_n$ as desired.
\end{proof}

\subsection{Deformation of the cactus flower space}

In \cite[\S 6]{IKLPR}, we also defined a deformation $ \overline \CF_n \rightarrow \BA^1 $ of $ \overline F_n $.  For any $ \varepsilon \in \BA^1$, we write $ \overline \CF_n(\varepsilon) $ for the fibre over $ \varepsilon$.  By construction $ \overline \CF_n (0) = \overline F_n $ and for $ \varepsilon \ne 0 $, we have $ \overline \CF_n(\varepsilon) \cong \overline M_{n+2} $, which we regard as Deligne-Mumford space where the marked points are labelled by $ 0, \dots, n+1$.

The open subset $ \CF_n \subset \overline \CF_n $ can be described as follows.  We have a group scheme $ \BG \rightarrow \BA^1 $ defined as $ \BG = \{ (x; \varepsilon) \in \C^2 : 1 - x \varepsilon \ne 1 \} $.  Then $ \CF_n = \BG^n \setminus \Delta / \BG $
where $$ \BG^n \setminus \Delta = \bigl\{(x_1, \dots, x_n; \varepsilon) \in \C^n \times \C : x_i \ne x_j, 1 - x_i \varepsilon \ne 0  \bigr\} $$

Moreover, in \cite[\S 7]{IKLPR}, we constructed two real forms: a standard real form, which we denote by $ \overline \CF_n^{split}$, and a twisted real form $ \overline \CF_n^{comp}$.  We have $ \overline \CF_n^{split} \rightarrow \R $ and $\overline \CF_n^{comp} \rightarrow i \BR $.  Moreover, we proved the following result \cite[Thm 9.3, 9.4]{IKLPR}.

\begin{thm} \label{th:homotopy}
    The inclusions $ \overline F_n(\BR) \subset \overline \CF_n^{split}$ and $ \overline F_n(\BR) \subset \overline \CF_n^{comp}$ are both homotopy equivalences.  In particular the inclusion maps induce isomorphisms 
    $$ \pi_1^{S_n}(\Fr{n}, \infty) \cong \pi^{S_n}_1(\Fs{n}, \infty) \qquad \pi_1^{S_n}(\Fr{n}, \infty) \cong \pi^{S_n}_1(\Fc{n}, \infty)$$
\end{thm}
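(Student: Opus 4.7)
The plan is to construct $S_n$-equivariant strong deformation retractions of $\overline\CF_n^{split}$ and $\overline\CF_n^{comp}$ onto their common central fibre $\overline F_n(\BR)$. The isomorphisms on $\pi_1^{S_n}$ then follow immediately, since an $S_n$-equivariant homotopy equivalence of $S_n$-spaces induces an isomorphism on the equivariant fundamental group functor $\pi_1^{S_n}$.

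On the open locus $\CF_n\subset\overline\CF_n$, modelled as $\BG^n\setminus\Delta/\BG$ with coordinates $(x_1,\dots,x_n;\varepsilon)$, the natural candidate is the scaling
$$H_t([(x_1,\dots,x_n;\varepsilon)]) = [(x_1,\dots,x_n;(1-t)\varepsilon)], \qquad t\in[0,1].$$
Real (resp.\ imaginary) values of $\varepsilon$ are preserved by this formula, so $H_t$ restricts to $\CF_n^{split}$ (resp.\ $\CF_n^{comp}$), and $S_n$-equivariance is manifest from symmetry of the construction. At $t=1$ every point lands in the central fibre $F_n(\BR)\subset\overline F_n(\BR)$, as required.

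The substantive work is extending $H_t$ continuously across the boundary of $\overline\CF_n$. I would use the affine cover by the charts $\widetilde U_\CS$ of \cite[\S 6]{IKLPR}, indexed by set partitions $\CS$ of $\{1,\dots,n\}$, on which $\overline\CF_n$ embeds into affine space via the available $\delta_{ij}$ and $\mu_{ijk}$ coordinates. On each such chart, I would track directly how these coordinates transform under $\varepsilon\mapsto(1-t)\varepsilon$ and verify that the resulting formulas yield a continuous map extending $H_t$; compatibility on overlaps is then forced by $\BG$-equivariance of the construction. An alternative, cleaner route is combinatorial: promote the cube complex $\widehat D_n\cong\overline F_n(\BR)$ from \cite[\S 9]{IKLPR} to cube complexes $\widehat D_n^{split}\cong\overline\CF_n^{split}$ and $\widehat D_n^{comp}\cong\overline\CF_n^{comp}$ of one real dimension higher, each containing $\widehat D_n$ as a codimension-one subcomplex corresponding to $\varepsilon=0$, and then exhibit an $S_n$-equivariant cube-complex retraction collapsing the $\varepsilon$-direction.

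The main obstacle, in either approach, is the behaviour at the deepest boundary strata of $\overline\CF_n$, where a flower bouquet at $\infty$ and a collection of stable-curve bubbles coalesce simultaneously as $\varepsilon$ and several $\delta_{ij}$ degenerate together; this is exactly the regime in which the total space $\overline\CF_n$ interpolates between $\overline M_{n+2}$ and $\overline F_n$. The charts $\widetilde U_\CS$ were designed to control precisely these degenerations, so the required continuity should reduce to careful bookkeeping with the $\delta_{ij}$ and $\mu_{ijk}$ coordinates within each chart, glued by the $\BG$-action across overlaps.
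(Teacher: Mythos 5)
First, a point of comparison: the paper does not prove this statement at all --- it is imported verbatim from \cite[Thm 9.3, 9.4]{IKLPR} (see also Theorems 9.23, 9.24 there), where $\overline F_n(\BR)$ is shown to be an $S_n$-equivariant deformation retract of both real forms. So the substance of your proposal has to stand on its own, and its central formula has a genuine gap. The homotopy $H_t([(x_1,\dots,x_n;\varepsilon)])=[(x_1,\dots,x_n;(1-t)\varepsilon)]$ is not well defined on $\CF_n=\BG^n\setminus\Delta/\BG$, because the group you quotient by depends on $\varepsilon$: the fibre $\BG_\varepsilon$ acts by $g\ast_\varepsilon x = g+x-\varepsilon gx$ (this is what makes $\delta_{ij}=(x_i-x_j)/(1-\varepsilon x_j)$ invariant). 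Two representatives $(x_i;\varepsilon)$ and $(g\ast_\varepsilon x_i;\varepsilon)$ of the same point are sent by your formula to $(x_i;(1-t)\varepsilon)$ and $(g\ast_\varepsilon x_i;(1-t)\varepsilon)$, and these lie in one $\BG_{(1-t)\varepsilon}$-orbit only if some $g'$ satisfies $g'(1-(1-t)\varepsilon x_i)=g(1-\varepsilon x_i)$ for all $i$ simultaneously, which is impossible for $n\ge 2$ distinct $x_i$, $g\ne 0$, $0<t<1$, $\varepsilon\ne 0$. So the map does not even exist on the open locus, before any question of extension to the boundary.

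Two further problems compound this. For the compact form, preserving imaginarity of $\varepsilon$ is not the issue: $\overline\CF_n^{comp}$ is a \emph{twisted} real form whose reality condition couples the $x_i$ to $\varepsilon$ (a real point over $\varepsilon\in i\R\setminus\{0\}$ is represented by a configuration on an $\varepsilon$-dependent circle, matching $\Fc{n}(\varepsilon)\cong\Mc{n+2}$), so freezing the $x_i$ and scaling $\varepsilon$ leaves the real locus. Even for the split form on a fixed representative, the constraint $1-\varepsilon x_i\ne 0$ can fail along the path (e.g.\ $\varepsilon x_i=2$ at $t=1/2$). Your fallback routes (chart-by-chart bookkeeping in $\widetilde U_\CS$, or a cube-complex model of $\overline\CF_n^{split}$ and $\overline\CF_n^{comp}$ with a collapse of the $\varepsilon$-direction) are essentially a restatement of what \cite{IKLPR} actually carries out, not a proof; the hard content of the cited theorems is precisely the construction of a retraction that is genuinely defined on the quotient, respects both real structures, and extends over the boundary strata. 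As written, the proposal does not supply that construction. (Your final reduction --- that an $S_n$-equivariant strong deformation retraction fixing the $S_n$-fixed basepoint $\infty$ induces isomorphisms on $\pi_1^{S_n}$ --- is fine.)
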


The functions $ \delta_{ij} : \overline F_n \rightarrow \BP^1 $ extend to functions $ \delta_{ij} :\overline \CF_n \rightarrow \BP^1$.  On the open locus $ \CF_n = \BG^n \setminus \Delta / \BG $, these functions are given by
$$
\delta_{ij}(x_1, \dots, x_n; \varepsilon) = \frac{ x_i - x_j}{1 - \varepsilon x_j} 
$$
For $ \varepsilon \ne 0 $, the isomorphism $ \overline \CF_n(\varepsilon) \cong \overline M_{n+2}  $ comes from extending the isomorphism $ \CF_n(\varepsilon) \cong  M_{n+2} $ given by $ (x_1, \dots, x_n; \varepsilon) \mapsto (\varepsilon^{-1}, x_1, \dots, x_n)$.  In terms of coordinates this means that $ \varepsilon \delta_{ij} = \mu_{j 0 i} $.

Considering $ \BR$-points for our two real structures, the isomorphism $\overline \CF_n(\varepsilon) \cong \overline M_{n+2}  $ restricts to isomorphisms
\begin{equation} \label{eq:CFepsReal}
\Fs{n}(\varepsilon) \cong \Ms{n+2} \text{ for $\varepsilon \in \BR \setminus \{0\}$} \qquad 
\Fc{n}(\varepsilon) \cong \overline M^{comp}_{n+2} \text{ for $\varepsilon \in i\BR \setminus \{0\}$}
\end{equation}
Here $\overline M^{comp}_{n+2} $ denotes the real form of $ \overline M_{n+2}$ parameterizing stable genus 0 curves, defined over $\R$, carrying one pair of complex conjugate points (labelled by $ 0, n+1$), and $ n $ real points, labelled by $ 1, \dots, n$.

Much as above, we can consider the locus where the functions $\delta$ all take the value $ 0$.  From \cite[Prop 6.12 and Rmk 11.14]{IKLPR}, we see the following deformation of Theorem \ref{th:infty}(2).

\begin{thm} \label{th:deformM}
     The locus $$ \{ z \in \overline \CF_n : \delta_{ij}(z) = 0, \text{ for all } i, j \} $$ is a divisor, isomorphic to $ \overline M_{n+1} \times \BA^1 $ (compatibly with the projections to $ \BA^1$). 
\end{thm}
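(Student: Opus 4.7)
The plan is to analyze the vanishing locus fiber by fiber over $\BA^1$ and then assemble these identifications into a global trivial $\BA^1$-family. Over $\varepsilon = 0$ the fiber is $\overline F_n$, and Theorem \ref{th:infty}(2) identifies the locus with $\overline M_{n+1}$ directly. For $\varepsilon \ne 0$, I would use the coordinate identity $\varepsilon \delta_{ij} = \mu_{j 0 i}$: this says that $\delta_{ij}(z) = 0$ if and only if $\mu_{j 0 i}(z) = 0$. Under the isomorphism $\overline \CF_n(\varepsilon) \cong \overline M_{n+2}$ (with marked points $0, 1, \dots, n+1$), the simultaneous vanishing of $\mu_{j 0 i}$ for all $i \ne j \in \{1, \dots, n\}$ forces the marked points $1, \dots, n$ to collide, so they bubble off into a single irreducible component attached at a node to the component carrying $0$ and $n+1$. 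The bubble is a stable genus $0$ curve with $n+1$ distinguished points (the $n$ marked points plus the attaching node), hence parameterized by $\overline M_{n+1}$, while the residual component is rigid. So every fiber is canonically $\overline M_{n+1}$.

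To upgrade this to a global statement I would construct a morphism $\overline M_{n+1} \times \BA^1 \to \overline \CF_n$ in coordinates. Recall from \cite[\S 6.1]{IKLPR} that $\overline \CF_n$ is covered by affine opens $\tilde U_\CS$ indexed by set partitions $\CS$ of $\{1, \dots, n\}$, and on $\tilde U_\CS$ the natural coordinates are the $\delta_{ij}$ together with those $\mu_{ijk}$ for which $i, j, k$ all lie in a common part of $\CS$. I would define the map by sending $\varepsilon \in \BA^1$ and the $\mu$-coordinates of a point of $\overline M_{n+1}$ to the point of $\tilde U_\CS$ with the same $\mu$-values, the same $\varepsilon$, and all $\delta_{ij} = 0$. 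These chart-level assignments agree on overlaps because the only chart-dependent data are the $\mu_{ijk}$, which are inherited unchanged from the standard embedding of $\overline M_{n+1}$ into a product of projective lines, and they assemble into a global morphism whose fiberwise behavior matches the analysis above.

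The main obstacle I anticipate is verifying that the resulting map is an isomorphism onto the vanishing divisor, and not merely a map with the correct fibers. Concretely, the defining equations for $\overline \CF_n$ mix $\delta$ and $\mu$ in ways that depend on $\varepsilon$, and one must check that after imposing $\delta_{ij} = 0$ these equations collapse exactly to the defining equations of $\overline M_{n+1}$ on each chart, with $\varepsilon$ decoupling as a free factor. This is a combinatorial check that follows the explicit construction in \cite[\S 6]{IKLPR}. Once well-definedness is established, properness of $\overline \CF_n \to \BA^1$ combined with the fiberwise bijectivity proved in the first step upgrades the morphism to an isomorphism. A dimension count $\dim \overline M_{n+1} + 1 = (n-2) + 1 = n - 1 = \dim \overline \CF_n - 1$ confirms that the image is a divisor, as claimed.
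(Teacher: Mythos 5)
A point of comparison first: the paper does not prove Theorem \ref{th:deformM} itself --- it imports it from \cite[Prop 6.12 and Rmk 11.14]{IKLPR}, where the boundary strata of $\overline \CF_n$ are described explicitly in the charts $\tilde U_\CS$. So you are effectively reproving that result, and your geometric picture is the right one: at $\varepsilon \ne 0$ the relation $\varepsilon \delta_{ij} = \mu_{j 0 i}$ identifies the vanishing locus inside $\overline M_{n+2}$ with the boundary divisor where a node separates the marked points $1, \dots, n$ from $\{0, n+1\}$, hence with $\overline M_{n+1} \times \overline M_3 \cong \overline M_{n+1}$, while at $\varepsilon = 0$ Theorem \ref{th:infty}(2) applies. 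One imprecision: the part of the curve carrying $1, \dots, n$ need not be a \emph{single irreducible} component; it is a stable $(n+1)$-marked curve, possibly nodal --- your parenthetical ``hence parameterized by $\overline M_{n+1}$'' is correct, but the phrase ``single irreducible component'' is not. Also, the whole locus $\{\delta_{ij}=0\}$ sits inside the chart attached to the one-block partition (the only chart where all $\mu_{ijk}$ are defined), so the ``chart-by-chart'' gluing you describe is mostly vacuous rather than load-bearing.

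The genuine gaps are in the globalization. First, the chart-level verification --- that after imposing $\delta_{ij} = 0$ the defining equations of $\overline \CF_n$ collapse to those of $\overline M_{n+1}$ with $\varepsilon$ splitting off as a free factor --- is exactly the content of the theorem, and you defer it as ``a combinatorial check''; that check is what \cite[Prop 6.12]{IKLPR} actually carries out, and without it you have only a fiberwise, set-theoretic statement. Second, the closing step ``properness of $\overline \CF_n \to \BA^1$ combined with fiberwise bijectivity upgrades the morphism to an isomorphism'' is not valid: a proper bijective morphism of varieties need not be an isomorphism (the normalization of a cuspidal curve is the standard example), so you would additionally need normality or smoothness of the locus $\{\delta_{ij}=0\}$ (plus birationality, to invoke Zariski's main theorem), or else verify the isomorphism directly on coordinate rings --- which returns you to the deferred computation. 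Relatedly, the statement concerns the common vanishing locus of the $\delta_{ij}$, and some argument is needed that this locus is reduced before a fiberwise bijection can even be promoted to an isomorphism of varieties. In short: the strategy and the fiberwise geometry are sound, but the core of the proof is the unexecuted chart computation, and the proposed shortcut around it does not go through as stated.
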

     This isomorphism is compatible with both real structures, so taking real points, this provides inclusions
     $$  \overline M_{n+1}(\BR) \times \BR \subset \overline \CF_n^{split} \quad \overline M_{n+1}(\BR) \times i\BR \subset \overline \CF_n^{comp} $$
     If we specialize to $ \varepsilon \in \BR \setminus \{ 0\} $, resp. $ \varepsilon \in i\BR \setminus \{ 0\} $, and compose with the isomorphisms from (\ref{eq:CFepsReal}), then we obtain the inclusions 
     $$ \Mr{n+1} \subset \Ms{n+2} \qquad \text{resp. } \Mr{n+1} \subset \overline M^{comp}_{n+2} $$
     The images of these inclusions are the loci of curves of the form $ C_0 \cup C_1 $, where marked points $ z_0, z_{n+1} $ lie on $C_0 \cong \BP^1$ and $ C_1 \in \Mr{n+1}$.

\subsection{Equivariant fundamental group of the split form} \label{se:Splitpi1}
Specializing (\ref{eq:CFepsReal}) at $ \varepsilon = 1$, we get an embedding $ \Ms{n+2} \cong \Fs{n}(1) \subset \Fs{n}$.  We will now study the resulting map on equivariant fundamental groups. 

 We have a basepoint in $ \Ms{n+2}$ defined above, but to avoid confusion with the same named point of $ \overline M_{n+1}(\BR) $, we will write this basepoint as $ \tilde y$; for convenience we fix the representative $ \tilde y = (0, 1 , \dots, n)$.

Let $ S_{n+1}$ be the group of permutations of $ \{0, \dots, n\}$.  We consider $ S_n \subset S_{n+1} $ as the subgroup fixing $ 0 $.  By Theorem \ref{th:pi1}(1), we have the isomorphism $ \pi_1^{S_{n+1}}(\Ms{n+2}, \tilde y) \cong C_{n+1}$.  However, we are interested in the $S_n $-equivariant fundamental group, and so it follows that $ \pi_1^{S_n}(\Ms{n+2}, \tilde y) \cong M C_n $, the \emph{mirabolic Cactus group} which we define as the preimage of $ S_n $ under the natural map $ C_{n+1} \rightarrow S_{n+1} $.

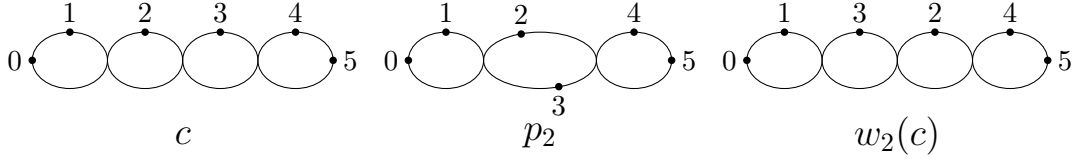
\begin{figure}
    \begin{tikzpicture}[scale=0.25]
        \draw (0,0) ellipse (2 and 1.5);
        \draw (4,0) ellipse (2 and 1.5);
        \draw (8,0) ellipse (2 and 1.5);
        \draw (12,0) ellipse (2 and 1.5);
\node at (-2,0) [circle,fill,inner sep=1pt]{};
\node [left] at (-2,0) {$0$};
\node at (0,1.5) [circle,fill,inner sep=1pt]{};
\node [above] at (0,1.5) {$1$};
\node at (4,1.5) [circle,fill,inner sep=1pt]{};
\node [above] at (4,1.5) {$2$};
\node at (8,1.5) [circle,fill,inner sep=1pt]{};
\node [above] at (8,1.5) {$3$};
\node at (12,1.5) [circle,fill,inner sep=1pt]{};
\node [above] at (12,1.5) {$4$};
\node at (14,0) [circle,fill,inner sep=1pt]{};
\node [right] at (14,0) {$5$};

                \draw (20,0) ellipse (2 and 1.5);
        \draw (25,0) ellipse (3 and 1.5);
        \draw (30,0) ellipse (2 and 1.5);
\node at (18,0) [circle,fill,inner sep=1pt]{};
\node [left] at (18,0) {$0$};
\node at (20,1.5) [circle,fill,inner sep=1pt]{};
\node [above] at (20,1.5) {$1$};
\node at (24,1.4) [circle,fill,inner sep=1pt]{};
\node [above] at (24,1.4) {$2$};
\node at (26,-1.4) [circle,fill,inner sep=1pt]{};
\node [below] at (26,-1.4) {$3$};
\node at (30,1.5) [circle,fill,inner sep=1pt]{};
\node [above] at (30,1.5) {$4$};
\node at (32,0) [circle,fill,inner sep=1pt]{};
\node [right] at (32,0) {$5$};

                \draw (38,0) ellipse (2 and 1.5);
        \draw (42,0) ellipse (2 and 1.5);
        \draw (46,0) ellipse (2 and 1.5);
        \draw (50,0) ellipse (2 and 1.5);
\node at (36,0) [circle,fill,inner sep=1pt]{};
\node [left] at (36,0) {$0$};
\node at (38,1.5) [circle,fill,inner sep=1pt]{};
\node [above] at (38,1.5) {$1$};
\node at (42,1.5) [circle,fill,inner sep=1pt]{};
\node [above] at (42,1.5) {$3$};
\node at (46,1.5) [circle,fill,inner sep=1pt]{};
\node [above] at (46,1.5) {$2$};
\node at (50,1.5) [circle,fill,inner sep=1pt]{};
\node [above] at (50,1.5) {$4$};
\node at (52,0) [circle,fill,inner sep=1pt]{};
\node [right] at (52,0) {$5$};

\node at (6, -4) {\Large $c$};
\node at (25, -4) {\Large $p_2$};
\node at (44, -4) {\Large $w_2(c)$};
\end{tikzpicture}
    \caption{The caterpillar point $ c \in \Ms{4+2}$, a point on the path $ p_2$, and the permuted caterpillar point $ w_2(c) $.} \label{fig:cat}
        
\end{figure}

In \cite{GHR}, we defined elements $ t_i \in C_{n+1} $ for $ i = 1, \dots, n-1$ in the following manner.  Let $ c $ be the standard caterpillar point of $ \overline M_{n+2}$, see Figure \ref{fig:cat}.  Under the inclusion $ \overline M_{n+2} \cong \overline \CF_n(1) \subset \overline \CF_n$, this point lies in the open set $ \CU_{[[n]]}$; on this open set we only have the standard coordinates $ \delta$, which at the point $ c $ are given by
$$\delta_{ij} = \begin{cases} 1 \text{ if $ i < j$} \\ \infty \text{ if $ i > j $}
\end{cases}
$$
For each $ i $, we have the permuted caterpillar point $ w_i(c)$, and a path $ p_i : [0,1] \rightarrow \Ms{n+2} $ given by 
$$\delta_{i \, i+1} = \frac{1}{1-t} \quad \delta_{i+1 \, i} = \frac{1}{t} $$
with all other $ \delta_{jk} $ constant.  This path can also be seen as the image of an interval in $ \Mr{4}$ under the $i$th caterpillar embedding $ \overline M_4 \rightarrow \overline M_{n+2}$ which glues on caterpillars at each end.  

Finally, we need to fix a path $ p_c^{y} $ between the caterpillar point and our standard base point.  This path does not matter precisely, as long as it stays in the open locus 
\begin{equation} \begin{gathered} \label{eq:Un}
    \CU_{[[n]]}(\BR) := \{ z \in \Fs{n} : \delta_{ij}(z) \ne 0 \text{ for all $ i, j$} \} \\
    = \bigl\{ (\nu, \varepsilon) \in \R^{n(n-1)} \times \R : \ \vareps \nu_{ik} + \nu_{ij} \nu_{jk} = \nu_{ik} \nu_{jk} + \nu_{ij} \nu_{ik} \quad \nu_{ij} + \nu_{ji} = \vareps \bigr\}
    \end{gathered}
\end{equation}
Here $ \nu_{ij} = \delta_{ij}^{-1}$.  This open set is a contractible neighbourhood of $ \infty \in \Fs{n}$ (which is the point where $(\nu, \varepsilon) = (0,0)$).

Then we define $ t_i = \widehat p_{y}^c((w_i, p_i))$. 

The caterpillar point $ c$ is invariant under $ w_0 \in S_{n+1}$ and so we can define $ t_0 =\widehat p_{y}^c((w_0, c)) $ where $c $ is the constant path at $ c$ (and as above, $ w_0$ is the transposition $(01)$ and not the long element).

\begin{thm}\cite{GHR} \begin{enumerate}
    \item The group $C_{n+1}$ is generated by $t_0, t_1,\ldots,t_{n-1}$. The standard generators of $C_{n+1}$ are expressed as $s_{0j}=t_0(t_1t_0)(t_2t_1t_0)(t_3t_2t_1t_0)\ldots (t_{j}t_{j-1}\ldots t_1t_0)$.
    \item The subgroup $\wt{C}_n\subset C_{n+1}$ is generated by $t_i$ for $i=1,\ldots, n-1$ and $s_{ij}$ for $1\le i<j\le n$.
\end{enumerate}
\end{thm}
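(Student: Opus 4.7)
The plan is to establish parts (1) and (2) separately, relying on identifying each $t_i$ with a specific element of $C_{n+1}$ and then using the defining cactus group relations.

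For part (1), the strategy has three steps. First, I would identify each $t_i$ as an explicit element of $C_{n+1}$. The paths $p_i$ for $i \geq 1$ traverse single 1-cubes in the cube complex structure on $\Ms{n+2}(\BR)$ inherited from $\Fs{n}$ via $\Ms{n+2} \cong \Fs{n}(1) \subset \Fs{n}$, swapping the positions of the adjacent points $i$ and $i+1$ near the caterpillar. A half 2-cube boundary analysis analogous to the proof of Theorem~\ref{th:pi12}(2), combined with the basepoint-change isomorphism $\widehat{p_y^c}$, yields an expression for each $t_i$ as a word in the standard cactus generators $s_{ij}$. For $t_0$, since the path is constant at the $w_0$-fixed caterpillar $c$, the corresponding expression follows from a direct calculation.

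Second, I would prove the formula $s_{0j} = t_0 (t_1 t_0) \cdots (t_j t_{j-1} \cdots t_1 t_0)$ by induction on $j$. The base case is immediate from the identifications in Step~1. For the inductive step, one uses the cactus relation $s_{0j}\, s_{0, j-1} = s_{1, j}\, s_{0j}$ (coming from $[0, j-1] \subset [0, j]$) together with the inductive hypothesis to rewrite $s_{0j}$ in the claimed form. Under the projection $C_{n+1} \to S_{n+1}$, the identity reduces to the classical expression for the long element of $S_{j+1}$ as a descending product of adjacent transpositions, which provides a useful consistency check. Finally, to conclude that the $t_i$'s generate all of $C_{n+1}$, note that any standard generator $s_{ab}$ with $1 \leq a < b \leq n$ can be expressed as $s_{ab} = s_{0b}\, s_{0, b-a}\, s_{0b}$, a consequence of the cactus relation applied to $[0, b-a] \subset [0, b]$; since each $s_{0k}$ is already expressible in the $t_i$'s, all standard generators lie in the subgroup generated by the $t_i$'s.

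For part (2), the approach uses the short exact sequence
$$1 \to PC_{n+1} \to MC_n \to S_n \to 1.$$
Every listed generator lies in $MC_n$: each $s_{ij}$ with $1 \leq i < j \leq n$ maps to $w_{ij}$, and each $t_i$ with $i \geq 1$ maps to the simple transposition $w_i = (i, i+1)$, both of which fix $0$. Let $H \subseteq MC_n$ denote the subgroup generated by these elements; then $H \to S_n$ is surjective since $w_1, \ldots, w_{n-1}$ generate $S_n$. It therefore suffices to show $PC_{n+1} \subseteq H$. By part (1), every element of $MC_n$ can be written as a word in $t_0, \ldots, t_{n-1}$; for elements of $PC_{n+1}$, the projection to $S_{n+1}$ is trivial, and since $t_0$ is the only generator moving $0$, its occurrences in such a word come in ``balanced'' pairs. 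A rewriting procedure using cactus relations brings each such pair of $t_0$'s together and eliminates them (via $t_0^2 = 1$), introducing only products of $s_{ij}$'s with $1 \leq i < j \leq n$ and $t_k$'s with $k \geq 1$.

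The main obstacle is the explicit identification in Step~1 of part (1): it requires careful tracking of the basepoint-change path $p_y^c$ across the cube complex and a precise determination of how the local paths $p_i$ decompose in terms of standard cactus generators. Once this identification is in hand, the remaining algebraic steps in part (1) and the rewriting argument in part (2) are routine in principle, though the latter is combinatorially intricate in execution.
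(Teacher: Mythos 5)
First, a point of reference: the paper itself contains no proof of this theorem --- it is quoted from \cite{GHR}, which is listed as ``in preparation'' --- so your proposal can only be judged on its own completeness, and as it stands it is an outline whose essential content is missing. The entire weight of part (1) rests on your Step 1, the identification of each $t_i=\widehat p_{y}^{c}((w_i,p_i))$ (and $t_0$) as an explicit word in the generators $s_{ab}$ of $C_{n+1}$, and this is precisely the step you do not carry out: you appeal to ``a half 2-cube boundary analysis analogous to Theorem \ref{th:pi12}(2)'' without producing the expression, and then declare the base case of the induction ``immediate from the identifications in Step 1.'' Without those identifications neither the base case nor the inductive step can be checked, so nothing in part (1) beyond the purely formal consequence $s_{ab}=s_{0b}\,s_{0,b-a}\,s_{0b}$ (which is correct) is actually proved. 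Moreover, the ``consistency check'' you invoke was evidently not performed: for $j=1$ the displayed formula gives $s_{01}=t_0t_1t_0$, whose image in $S_{n+1}$ is the transposition $(0\,2)$, not $w_{01}=(0\,1)$; the product as written, ending in $t_j\cdots t_1t_0$, projects to the reversal of $[0,j+1]$ rather than of $[0,j]$. So either the cited formula uses a different convention or has an index shift, and any genuine proof must resolve this --- your sketch silently passes over it, which shows the projection check cannot be serving as the sanity test you claim.

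Part (2) has a comparable gap. Reducing to showing that the subgroup $H$ generated by the listed elements contains $\ker(C_{n+1}\to S_{n+1})$ is fine, but your mechanism for this --- ``the occurrences of $t_0$ come in balanced pairs'' and ``a rewriting procedure using cactus relations brings each pair together and eliminates them'' --- is unjustified. Parity of $t_0$-occurrences is not an invariant you have access to: that would require a homomorphism $C_{n+1}\to\BZ/2$ counting $t_0$'s, i.e.\ a presentation of $C_{n+1}$ in the generators $t_0,\dots,t_{n-1}$ whose relations preserve that count, and no such presentation is known or provided (indeed a word with an odd number of $t_0$'s can map into $S_n$, e.g.\ $t_0t_1t_0t_1t_0$ when its image is computed). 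The cactus relations are stated in the generators $s_{ij}$, so ``rewriting a word in the $t_i$ using cactus relations'' presupposes exactly the translation from Step 1 that is missing, and even granted it, the claim that the elimination introduces only $s_{ij}$ with $1\le i<j\le n$ and $t_k$ with $k\ge 1$ is a nontrivial assertion you do not argue. A workable route here would be a Reidemeister--Schreier argument for the index-$(n+1)$ subgroup $MC_n\subset C_{n+1}$ with explicit coset representatives (say powers of words moving $0$), expressing the resulting Schreier generators in terms of $t_1,\dots,t_{n-1}$ and the $s_{ij}$; as written, your argument for (2) is a hope rather than a proof.
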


\begin{rem}
    The generators $t_i$ first appeared in the pioneering paper of Berenstein and Kirillov \cite{BK}. The precise relation between the Berenstein-Kirillov group and the cactus group was first established by Chmutov, Glick and Pylyavskyy in \cite{CGP}.
\end{rem}

We now consider the inclusion $ \Mr{n+1} \subset \Ms{n+2}$ as in the discussion after Theorem \ref{th:deformM}.  On this locus $ \mu_{j 0 i} = \infty$ and the inclusion is compatible with all $ \mu_{ijk}$ functions, for $1\le i,j,k \le n$.  

There is a natural path $ p_{\tilde y}^{y}$ with domain $ [-\infty, 0] $ from the basepoint $ \tilde y$ to the basepoint $  y $.  We define this curve by defining  $ p : (-\infty, 0) \rightarrow M_{n+2} $ by $ p(t) = (1, t, 2t, \dots, nt)$.  The limit of this curve at $ t = -\infty$ is the point $ \tilde y$, while its limit at $ t = 0 $ is the point $ y $.  In terms of the $\mu$-coordinates on $ \overline M_{n+2} $, this means that 
$$
\mu_{ijk}(t) = \mu_{ijk}^0  \text{ for $ i,j,k \le n$} \quad \mu_{j 0 i} = \frac{j - i}{j - t^{-1}}   
$$

This path gives us a homomorphism 
$$
\widehat p^{y}_{\tilde y} : C_n = \pi_1^{S_n}(\overline M_{n+1}(\BR), y) \rightarrow \pi_1^{S_n}(\Ms{n+2}, \tilde y)= \tilde C_{n+1}
$$
The following result is immediate from the definitions.

\begin{prop} \label{th:CntCn}
$\widehat p_{y}^{\tilde y}  $ takes all generators $ s_{ij}$ of $ C_n $ to the same named generators of $ \tilde C_{n+1} $.
\end{prop}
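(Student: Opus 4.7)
The plan is to verify this by tracing through the cube-complex descriptions of both fundamental groups and the deformation in Theorem \ref{th:deformM}, exactly in the spirit of the proof of Theorem \ref{th:pi12}(2). The statement is asserted to be ``immediate from the definitions,'' so the goal is to unpack the definitions in parallel on both sides and match them combinatorially.

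First, I would recall the concrete models for the two generators. By the construction of the isomorphism $\pi_1^{S_n}(\Mr{n+1}, y) \cong C_n$ used in Theorem \ref{th:pi12}(2), the generator $s_{ij} \in C_n$ is represented by the pair $(w_{ij}, \tau_{ij})$, where $\tau_{ij}$ is the $1$-cube in $H_n$ labelled by the planar tree with two internal vertices isolating the interval $[i,j]$. On the other side, by the same construction applied to $\Ms{n+2}$, the generator $s_{ij} \in C_{n+1}$ (for $1 \le i < j \le n$) is represented by a pair $(w_{ij}, \tau'_{ij})$, where $\tau'_{ij}$ is the analogous $1$-cube in the cube complex for $\Ms{n+2}$, labelled by the planar tree obtained from $\tau_{ij}$ by attaching the extra marked leaf $0$ to the trunk (so that $0$ lies on the component that does not contain the bubble for $[i,j]$).

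Second, I would analyze the concatenation $p_{\tilde y}^{y} * \tau_{ij} * w_{ij}(p_y^{\tilde y})$ prescribed by formula (\ref{eq:pi1map}). The path $p(t) = (1, t, 2t, \dots, nt)$ used to define $p_y^{\tilde y}$ has the property that, as $t \to 0$, the marked points $1, \dots, n$ collide into a single bubble on which the coordinates $\mu_{ijk}$ for $1 \le i,j,k \le n$ remain constant and equal to $\mu_{ijk}^0$, while the point $0$ (corresponding to $\varepsilon^{-1}=1$) stays on the root component, yielding exactly the boundary divisor $\Mr{n+1} \subset \Ms{n+2}$ from Theorem \ref{th:deformM}. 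Hence $p^{y}_{\tilde y}$ traces out the $1$-cube in the cube complex of $\Ms{n+2}$ labelled by the planar tree $\widetilde{p_y^\infty}$ with two internal vertices: a root vertex carrying leaf $0$, and a second vertex carrying leaves $1, \dots, n$ in order. The element $w_{ij} \in S_n \subset S_{n+1}$ fixes $0$, so $w_{ij}(p_y^{\tilde y})$ is the same kind of path with the leaves of the inner vertex relabelled by $w_{ij}$.

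Third, I would identify the resulting element of $\pi_1^{S_n}(\Ms{n+2}, \tilde y)$ using a half $2$-cube in the cube complex of $\Ms{n+2}$, in direct analogy with Figure \ref{fig:2cube}. The boundary of this $2$-cube consists of the $1$-cubes $\widetilde{p_y^\infty}$, $\tau'_{ij}$, $w_{ij}(\widetilde{p_y^\infty})$, and a final $1$-cube whose labelling tree is obtained from the $\tau_A$ of the proof of Theorem \ref{th:pi12}(2) by attaching leaf $0$ to the root. This says exactly that the pair $(w_{ij}, p_{\tilde y}^{y} * \tau_{ij} * w_{ij}(p_y^{\tilde y}))$ is homotopic to $(w_{ij}, \tau'_{ij})$, which by construction is the generator $s_{ij}$ of $C_{n+1}$. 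The main (and essentially only) obstacle is the bookkeeping needed to check that the planar tree labelling this $2$-cube is the correct one, i.e.\ that attaching the marked point $0$ to the trunk is compatible with both the caterpillar path $p$ and the action of $w_{ij}$; once that is verified, the proposition follows by the same argument that concluded the proof of Theorem \ref{th:pi12}(2).
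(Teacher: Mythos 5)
Your argument is correct; the paper in fact offers no written proof here (it simply declares the statement immediate from the definitions), and your half-2-cube verification in the cube complex for $\Ms{n+2}$, modeled on the paper's own proof of Theorem \ref{th:pi12}(2), is precisely the natural way to substantiate that claim: the sides $p_{\tilde y}^{y}$, $w_{ij}(p_{\tilde y}^{y})$, the deep edge $\tau_{ij}$ inside the divisor $\Mr{n+1}$, and the shallow edge $\tau'_{ij}$ bound a disk, giving the required homotopy. The only point worth flagging is that $p_{\tilde y}^{y}$ is not literally a $1$-cube but stays in a single closed chamber (all $\mu_{ijk}$ with $i,j,k\le n$ constant equal to $\mu_{ijk}^0$), so it is only homotopic rel endpoints to the edge you use --- a harmless gloss of the same kind the paper itself makes.
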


Now, we define a path $p_{\infty}^{\tilde y}$ connecting $ \tilde y \in \Ms{n+2} \cong \Fs{n}(1)$ and $ \infty \in \Fr{n} = \Fs{n}(0)$.  For this path, we consider the map $ p : (0,1] \rightarrow \CF_n$ defined by $$ p(s) = (s^{-1} -s , \dots, s^{-1} - ns; s) $$
In terms of the coordinates $ \mu, \delta $, we see that $ \mu_{ijk} = \mu_{ijk}^0 $ is constant and $ \delta_{ij}(s) = \frac{ j - i}{js}$.  From this description, we see that $ \lim_{s\rightarrow 0 } p(s) = \infty$ (since all $ \delta_{ij}$ become $ \infty $), while $ p(1) = (0, -1, \dots, 1-n; 1) $ which under the isomorphism $ \Fs{n}(1) \cong \Ms{n+2}$ becomes $ (1, 0, -1, \dots, 1-n) $ which is equivalent to $ \tilde y $ (this can also be seen easily using the $ \delta $ coordinates).  In fact the path $ p $ is chosen so that for any $ s \in (0,1]$, under the isomorphism $ \Fs{n}(s) \cong \Ms{n+2}$ the point $p(s) $ becomes $ \tilde y$.

Via this map and using Theorem \ref{th:homotopy}, we get a group homomorphism 
$$
\widehat p_{\tilde y}^{\infty} : \tilde C_{n+1} \cong \pi_1^{S_n}(\Ms{n+2}, \tilde y) \rightarrow \pi_1^{S_n}(\Fs{n}, \infty) \cong \pi_1^{S_n}(\Fr{n}, \infty) \cong vC_n
$$

We are now in a position to formulate and prove the following result.

\begin{thm} \label{th:pi1split}
    The group homomorphism $ \widehat p_{\tilde y}^{\infty} : \tilde C_{n+1} \rightarrow vC_n $ is characterized as follows
    \begin{enumerate}
        \item It takes $ C_n \subset \tilde C_{n+1}$ identically to $ C_n \subset vC_n$.
        \item It takes $ t_i \in \tilde C_{n+1}$ to the elementary transposition $ w_i \in S_n \subset vC_n$, for $ i =1, \dots, n-1$.
    \end{enumerate}
\end{thm}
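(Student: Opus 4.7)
The plan is to treat the two statements separately, with (1) following from Proposition~\ref{th:CntCn} and Theorem~\ref{th:pi12}(2) via a path-homotopy argument in $\Fs{n}$, and (2) following by showing that a representing loop for $t_i$ lies in a contractible open neighborhood of $\infty$.

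For (1), Proposition~\ref{th:CntCn} identifies $\widehat{p_y^{\tilde y}}(s_{ij}) = s_{ij} \in \tilde C_{n+1}$, so it remains to compute $\widehat{p_{\tilde y}^\infty}$ on this image. By functoriality of the basepoint-change construction with respect to path concatenation and inclusion composition, the composition $\widehat{p_{\tilde y}^\infty} \circ \widehat{p_y^{\tilde y}}$ equals $\widehat{Q}$, where $Q = p_y^{\tilde y} * p_{\tilde y}^\infty$ is the concatenated path in $\Fs{n}$ from the image of $y$ in $\Ms{n+2} \subset \Fs{n}$, through $\tilde y$, to $\infty$. Meanwhile Theorem~\ref{th:pi12}(2) gives $\widehat{p_y^\infty}(s_{ij}) = s_{ij} \in vC_n$ using the path $p_y^\infty$ from the image of $y$ in $\Fr{n} \subset \Fs{n}$ to $\infty$. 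To reconcile the two, I will use the vertical path $\gamma_y(\varepsilon) = (y, \varepsilon)$ in the divisor $\overline M_{n+1}(\BR) \times \BR \subset \Fs{n}$ provided by Theorem~\ref{th:deformM}: the obvious homotopy through this divisor between the two inclusions relates the loop $\tau_{ij}$ at the two different basepoints by conjugation by $\gamma_y$, so it is enough to exhibit a homotopy $\gamma_y^{\mathrm{rev}} * p_y^\infty \simeq Q$ of paths from $(y,1)$ to $\infty$ in $\Fs{n}$. For this I will exhibit the two-parameter family
\[
P : [-\infty, 0] \times [0, 1] \longrightarrow \Fs{n}, \qquad P(t, \varepsilon) = (t, 2t, \dots, nt; \varepsilon).
\]
Using $\delta_{ij}(P(t, \varepsilon)) = (i-j)t / (1 - \varepsilon j t)$, one checks that $P$ extends continuously to the compactified rectangle and that its four boundary edges recover (up to orientation) the paths $p_y^\infty$ at $\varepsilon = 0$, $\gamma_y$ at $t = 0$, $p_y^{\tilde y}$ at $\varepsilon = 1$, and $p_{\tilde y}^\infty$ at $t = -\infty$. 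The resulting nullhomotopic boundary loop yields the required path homotopy, so $\widehat{p_{\tilde y}^\infty}(s_{ij}) = \widehat{p_y^\infty}(s_{ij}) = s_{ij} \in vC_n$.

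For (2), the plan is to show that the loop at $\infty$ in $\Fs{n}$ representing $\widehat{p_{\tilde y}^\infty}(t_i)$ lies entirely inside the contractible open subset $\CU_{[[n]]}(\BR) \subset \Fs{n}$ of (\ref{eq:Un}), which is a neighborhood of $\infty$. Unpacking the definition of $t_i$: the caterpillar point $c$ and $w_i(c)$ have $\delta$-coordinates in $\{1, \infty\}$, so they lie in $\CU_{[[n]]}(\BR)$; the path $p_i$ from $c$ to $w_i(c)$ only varies $\delta_{i, i+1}$ and $\delta_{i+1, i}$, neither passing through $0$, so it stays in $\CU_{[[n]]}(\BR)$; the base path between $\tilde y$ and $c$ was chosen in $\CU_{[[n]]}(\BR)$ by construction; and $p_{\tilde y}^\infty$ has $\delta_{ij}(s) = (j-i)/(js) \ne 0$ for $s \in (0, 1]$, so it too lies in $\CU_{[[n]]}(\BR)$. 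Since $\CU_{[[n]]}(\BR)$ is contractible, the path component of the pair $(w_i, \mathrm{loop})$ is nullhomotopic, and hence $\widehat{p_{\tilde y}^\infty}(t_i)$ collapses to the pure permutation contribution $w_i$. By Theorem~\ref{th:pi12}(1), this is exactly the image of the elementary transposition $w_i$ under the natural inclusion $S_n \hookrightarrow vC_n$.

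The main obstacle lies in part (1), namely verifying that the family $P$ extends continuously to the four corners of the compactified rectangle $[-\infty,0] \times [0,1]$ (which reduces to a direct limit computation with the $\delta$-coordinates at $t \to -\infty$ and $\varepsilon \to 0$), and then tracking the orientations around the boundary carefully so that the nullhomotopic boundary loop genuinely implements the path homotopy $\gamma_y^{\mathrm{rev}} * p_y^\infty \simeq Q$ in the correct direction. Once this is in place, both statements follow by formal manipulation of basepoint-change homomorphisms.
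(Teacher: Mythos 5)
Your proposal is correct and follows essentially the same route as the paper: part (1) uses exactly the two-parameter family $(t,\varepsilon)\mapsto(t,2t,\dots,nt;\varepsilon)$ over $[-\infty,0]\times[0,1]$ (the paper's homotopy $H$), identifying its four boundary edges with $p_y^\infty$, the vertical path in $\overline M_{n+1}(\BR)\times\BR$, $p_{\tilde y}^{y}$, and $p_{\tilde y}^\infty$, and then invokes Proposition \ref{th:CntCn} and Theorem \ref{th:pi12}(2); part (2) is the paper's argument that the loop representing $t_i$ lies in the contractible $S_n$-stable neighbourhood $\CU_{[[n]]}(\BR)$ of $\infty$, so only the equivariant part $w_i$ survives. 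Your extra checks (the explicit $\delta$-computation on the rectangle and the verification that each piece of the $t_i$-loop avoids $\delta_{ij}=0$) only make explicit what the paper asserts.
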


\begin{rem}
    This theorem shows that the homomorphism $ \tilde C_{n+1} \cong \pi_1^{S_n}(\Ms{n+2}, \tilde y) \rightarrow \pi_1^{S_n}(\Fs{n}, \infty) \cong \pi_1^{S_n}(\Fr{n}, \infty) \cong vC_n $ is not faithful, since the generators $t_i$ do not satisfy braid relations in $\wt{C}_n$: indeed, the action of $\wt{C}_n$ on skew-shaped semistandard Young tableaux constructed in \cite{GHR} takes $t_i$ to the $i$-th Bender-Knuth involution which do not satisfy the braid relation.
\end{rem}

\begin{proof}
    We begin with first point.  Let $ D = [-\infty, 0] \times [0,1]$, which is a rectangle with four boundary intervals
\begin{equation} \label{eq:bdy} [-\infty, 0] \times \{0\}, \ \{0\}  \times [0,1], \ [-\infty,0 ] \times \{1\}, \ \{-\infty\}  \times [0,1] \end{equation}

    
    We define $ H : D \rightarrow \Fs{n}$ by first setting
    $$
    H(t,s) = (t, 2t \dots, nt; s)
    $$
    for $ (t,s) \in (-\infty, 0) \times [0, 1]$.  At such a point $ H(t,s) $, the $\mu$ coordinates are constant and equal to $ \mu_{ijk}^0 $ while the $ \delta $ coordinates are given by
    $$
    \delta_{ij}(s,t) = \frac{ j -i}{sj - t^{-1}}
    $$
    From this description, it is clear that $ H$ extends to all of $D $.  Moreover, if we look at the 4 boundary intervals of $ D $ described in (\ref{eq:bdy}), we see that $H $ restricts (respectively) to 
    $$ p_{\infty}^{(y,0)}, \ p_{(y,0)}^{(y,1)}, \ p_{\tilde y}^{(y,1)}, \ p_{\infty}^{\tilde y}$$
    where all our intervals are oriented in the direction of increasing $ t $ or $s $.
    
    For the interval $ \{-\infty \} \times [0,1] $ this is not obvious, but it follows by noting that in $ \BG^n / \BG $, $$(t, 2t \dots, nt; s) = (s^{-1} + t^{-1} - s, s^{-1} + t^{-1} - 2s, \dots, s^{-1} + t^{-1} - ns; s)$$
    and so in the limit $t \to -\infty$ we obtain $ p_{\infty}^{\tilde y}$.


    All of these curves were defined above except for $ p_{(y,0)}^{(y,1)}$ which is simply the obvious path between $(y,0) $ and $(y,1) $ inside $ \Mr{n+1} \times \BR$.  Thus via this homotopy, we conclude that $$ \widehat p_{(y,0)}^\infty \circ \widehat p_{(y,1)}^{(y,0)} =  \widehat p^{\infty}_{\tilde y} \circ \widehat p^{\tilde y}_{(y,1)} $$
    as maps $ C_n \rightarrow vC_n$.  Using Theorem \ref{th:pi12}(2) and Proposition \ref{th:CntCn} and noting that $\widehat p_{(y,1)}^{(y,0)} $ is obviously the identity, this proves (1).

    For the second point, we note that the curves $ p_c^{w(c)}, p_y^c, p_{\tilde y}^{\infty}$ all lie in the $ S_n$ stable open subset $ \CU_{[[n]]}(\BR) $ of $\Fs{n}$.  This open set (\ref{eq:Un}) is contractible and thus any loop made as concatenations of these curves is homotopically trivial.  So we are just left with the equivariant part of this path which is $ w_i$.  
    
    Alternatively, we can explicitly contract the path $ p_i$ to $ \infty$ as follows.  We define $H : [0,1] \times [0,1] \rightarrow \Fs{n}$ by setting
    $$
\varepsilon(s,t) = s \quad \delta_{i \, i+1}(s,t) = \frac{1}{s(1-t)} \quad \delta_{i+1 \, i} = \frac{1}{st}   
    $$
    with all other $ \delta_{ij} $ given by
    $$\delta_{ij} = \begin{cases} s^{-1} \text{ if $ i < j$} \\ \infty \text{ if $ i > j $} \end{cases}
$$
Then $ H$ restricted to $ \{1\} \times [0,1] $ is the path $ p_i$, while $ H $ restricted to $ \{0 \} \times [0,1]$ is the constant path at $ \infty$.  This proves that $ p_i $ is homotopic to the constant path as desired.
\end{proof}

\subsection{Equivariant fundamental group of the compact real form} \label{se:Comppi1}
Now, we consider the compact real form $ \Fc{n}$ which was studied in \cite[\S 11]{IKLPR}.

We will consider the inclusion $$ \Mc{n+2} \cong \Fc{n}(i) \subset \Fc{n} $$
where the first isomorphism comes from specializing (\ref{eq:CFepsReal}) at $\varepsilon = i$.

We choose a basepoint $ u \in \Mc{n+2} $, as follows.  We have $ M_{n+2}^{comp} = U(1)^n \setminus \Delta / U(1)$ and we fix $ u = (\zeta, \zeta^2, \dots, \zeta^n) $ where $ \zeta = e^{2 \pi i / n}$.

The we fix a path $ p_u^\infty $ between $u$ and the basepoint $ \infty$.  This gives us the group homomorphism 
$$ \widehat p_u^\infty : \pi_1^{S_n}(\Mc{n+2}, u) \rightarrow \pi_1^{S_n}(\Fc{n}, \infty) \cong \pi_1^{S_n}(\overline F_n(\BR), \infty) $$
where the last step uses Theorem \ref{th:homotopy}.
 
 We recall the definition of the \emph{extended affine cactus group} $ \widetilde{AC}_n $ by generators and relations from \cite[\S 10.3]{IKLPR}. Consider the set $\mathbb{Z}/n\mathbb{Z} = \{1, \dots, n\} $ with its cyclic order $ 1 < \dots < n < 1 $.  Given an ordered pair $  1 \le i,j \le n $ with $ i \ne j $, we consider the interval $ [i,j] = \{i < i+1 < \dots < j\} $ in this cyclic order. Each interval carries a total order.  We write $ [k,l] \subset [i,j] $ (and say that it is a subinterval) if there is a containment which preserves the orders. The generators of $ \widetilde{AC}_n$ are $s_{ij}$ for all intervals $[i,j]\subset\mathbb{Z}/n\mathbb{Z}$ and another generator $r$, with the following defining relations (all the arithmetic operations are modulo $n$):
 \begin{enumerate}
	 	\item $ s_{ij}^2 = 1 $
	 	\item $ s_{ij} s_{kl} = s_{kl} s_{ij} $ if $ [i,j] \cap [k,l] = \emptyset $
	 	\item $ s_{ij} s_{kl} = s_{i+j-l,i+j-k} s_{ij} $ if $ [k,l] \subset [i,j] $ 
        \item $r^n=1$
        \item $rs_{ij}r^{-1}=s_{i+1,j+1}$.
	 \end{enumerate} 

In particular, $\widetilde{AC}_n$ is generated by $s_{ij}$ with $1\le i<j\le n$ and $r$.  From \cite[Theorem 10.9]{IKLPR}, there is a group homomorphism $\widetilde{AC}_n\to vC_n$ taking $s_{ij}$ to $s_{ij}$, for $ 1 \le i < j \le n$ and $r$ to the long cycle $(1 2 \dots n)$ in $S_n$.      

From Corollary 11.4, Theorems 11.12, and 11.13 of \cite{IKLPR}, we have the following result.

\begin{thm}\label{th:AC} There is an isomorphism $ \pi_1^{S_n}(\Mc{n+2}, u) \cong \widetilde{AC}_n $ which fits into the following diagram  $$
	\begin{tikzcd}
\pi_1^{S_n}(\Mc{n+2}, u) \arrow[d,"\cong"] \arrow[r,"\widehat p_u^\infty"] & \pi_1^{S_n}(\overline F_n(\BR), \infty) \arrow[d,"\cong"]\\
\widetilde{AC}_n  \arrow[r] & vC_n 
	\end{tikzcd}
	$$
\end{thm}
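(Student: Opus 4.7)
The plan is to establish the isomorphism $\pi_1^{S_n}(\Mc{n+2}, u) \cong \widetilde{AC}_n$ via an explicit cube complex model for $\Mc{n+2}$ and then verify commutativity of the diagram by a homotopy argument modeled on the proof of Theorem \ref{th:pi1split}. Since the compact real form parametrizes configurations of $n$ marked points on the circle $S^1$ together with one pair of complex-conjugate marked points, its combinatorial boundary strata naturally carry a cyclic order on $\{1, \dots, n\} \cong \BZ/n$. First I would construct a cube complex $D_n^{cyc}$ whose cells are indexed by planar trees whose leaves are laid out in that cyclic order, together with a homeomorphism $D_n^{cyc} \cong \Mc{n+2}$ paralleling the Davis--Januszkiewicz--Scott identification $H_n \cong \overline M_{n+1}(\BR)$.

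Next I would extract generators and relations of $\pi_1^{S_n}(\Mc{n+2}, u)$ from the $2$-skeleton of $D_n^{cyc}$. The $1$-cubes naturally correspond to cactus generators $s_{ij}$ for cyclic intervals $[i,j] \subset \BZ/n$, and the rotational symmetry of the cyclic arrangement contributes an additional generator $r$ whose image in $S_n$ is the long cycle $(1 2 \dots n)$. Reading off the $2$-cubes produces the commutation and nesting relations $s_{ij}^2 = 1$, $s_{ij} s_{kl} = s_{kl} s_{ij}$ for disjoint cyclic intervals, and $s_{ij} s_{kl} = s_{i+j-l, i+j-k} s_{ij}$ for nested ones; the $S_n$-equivariance together with the cyclic symmetry gives $r^n = 1$ and $r s_{ij} r^{-1} = s_{i+1, j+1}$. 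This exhibits the defining presentation of $\widetilde{AC}_n$.

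For the diagram commutativity, I would construct explicit disks in $\Fc{n}$ whose boundaries realize the required equalities in $\pi_1^{S_n}(\Fc{n}, \infty) \cong \pi_1^{S_n}(\overline F_n(\BR), \infty) \cong vC_n$. For each $s_{ij}$ with $1 \le i < j \le n$, I would produce a half $2$-cube analogous to Figure \ref{fig:2cube}, whose boundary relates the concatenation $(p_u^\infty)^{rev} * \tau_{ij} * w_{ij}(p_u^\infty)$ to a cactus $1$-cube $\tau_A$, allowing the image of $s_{ij}$ to be read off as $s_A w_{ij} = s_{ij}$ in $vC_n$, just as in the proof of Theorem \ref{th:pi12}(2). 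For $r$, I would build a homotopy inside the compact analogue of the contractible open neighborhood (\ref{eq:Un}) of $\infty$ that contracts the path underlying $r$ to the constant loop at $\infty$, leaving only its underlying permutation, namely the long cycle.

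The hardest step will be the cube complex model of $\Mc{n+2}$: one has to make sense of the ``cyclic'' analogue of the Davis--Januszkiewicz--Scott complex carefully enough that the equivariant fundamental group gives exactly the presentation of $\widetilde{AC}_n$, and in particular that the subtle conjugation relation $r s_{ij} r^{-1} = s_{i+1,j+1}$, which mixes cactus generators with cyclic rotation, is visible as a genuine $2$-cube in the complex. Once that combinatorial model is established, compatibility of the inclusion $\Mc{n+2} \cong \Fc{n}(i) \hookrightarrow \Fc{n}$ with the $\delta_{ij}$-coordinates makes the diagram commutativity a mostly formal consequence of the explicit homotopy computations of Theorem \ref{th:pi1split}, transplanted from the split real form to the compact one.
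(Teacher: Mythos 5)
The first thing to note is that the paper does not prove this statement at all: Theorem~\ref{th:AC} is imported verbatim from \cite{IKLPR} (Corollary~11.4 and Theorems~11.12, 11.13 of that paper), so there is no internal argument to compare yours against. What you have written is therefore an outline of how one might reprove the cited result, and in broad strokes it is consistent with how that reference proceeds (combinatorial cube-complex models of the real loci, reading off an explicit presentation, and homotopy arguments near the flower point $\infty$ of the kind used here in Theorems~\ref{th:pi12} and~\ref{th:pi1split}).

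As a proof, however, your proposal has its essential content deferred rather than established. The entire weight rests on the claim that $\Mc{n+2}$ admits a cube-complex (cyclohedral) model whose $S_n$-equivariant fundamental group has \emph{exactly} the presentation of $\widetilde{AC}_n$, and this is stated only as a plan. Two points inside that step are genuinely nontrivial and are not addressed: (i) ``reading off the $2$-cubes'' only shows that the listed relations \emph{hold} in $\pi_1^{S_n}(\Mc{n+2},u)$; showing that they \emph{suffice} (no further relations) requires a complete enumeration of cells together with an equivariant van Kampen / Davis--Januszkiewicz--Scott-type argument, since the $S_n$-action is not free and the equivariant fundamental group is not simply read off a quotient CW structure; (ii) the relation $r^n=1$ and the identification of $r$ need an actual argument at the level of paths from $u$ to the rotated basepoint (in the open locus $M^{comp}_{n+2}=U(1)^n\setminus\Delta/U(1)$ the rotation path is constant in the quotient, which is why this works, but you do not say this). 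The diagram-commutativity part is more routine and your plan there is sound: the half $2$-cube argument for $s_{ij}$ transplants from Theorem~\ref{th:pi12}(2), and contracting $r$ requires exhibiting a contractible, $S_n$-stable neighbourhood of $\infty$ in $\Fc{n}$ containing the rotation path, i.e.\ a compact analogue of the set in (\ref{eq:Un}), which again you assert but do not construct. In short: plausible blueprint, but the hard step you yourself flag is precisely the content of the theorems the paper cites, and it remains unproved in your write-up.
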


\begin{rem}
From the discussion following Theorem \ref{th:deformM}, we have an inclusion $ \Mr{n+1} \subset \Mc{n+2}$.  Fixing a path between the corresponding basepoints gives a group homomorphism $ C_n \cong \pi_1^{S_n}(\Mr{n+1}, y ) \rightarrow \pi_1^{S_n}(\Mc{n+2}, u) \cong \widetilde{AC}_n$.  It can be shown that this homomorphism is injective and takes generators $s_{ij}$ of $ C_n $ to the same-named generators of $ \widetilde{AC}_n$ (this is the analog of Prop \ref{th:CntCn}).
\end{rem}

\section{Operadic coverings}

\subsection{Operadic structure}

We will be interested in certain maps between these moduli spaces.  Given $ k = 1, \dots, n-1 $, we have the following embeddings (except $ \beta_k $ is also defined when $ k = 0 $):
\begin{equation} \label{eq:op}
\begin{gathered}
 \alpha_k : \overline F_k \times \overline F_{n-k} \rightarrow \overline F_n \quad
 \beta_k : \overline F_{k+1} \times \overline M_{n-k +1} \rightarrow \overline F_n \quad
\gamma_k : \overline M_{k +2} \times \overline M_{n-k +1} \rightarrow \overline M_{n+1} \quad
\end{gathered}
\end{equation}

In terms of curves: $\alpha_k$ attaches two cactus flower curves at their distinguished points, $\beta_k$ attaches the last marked point of a cactus flower curve to the last marked point of a cactus curve, and $ \gamma_k $ attaches the $ (k+1)$st marked point of one cactus curve to the last marked point of another cactus curve.  Of course, we could have defined variants of these maps by changing which marked points we glued.  These different versions are related by the $ S_n $ action.  We have chosen the above conventions so that in each case the maps are $ S_k \times S_{n-k}$-equivariant.

The images of these $\alpha_k$ and $ \beta_k$ are divisors in $\overline F_n$.  Up to the action of $ S_n$, they give all the irreducible components of $ \overline F_n \smallsetminus F_n $.  In the notation of \cite[\S 6.2]{IKLPR}, the image of $\alpha_k $ is the closure of  $ \widetilde V_{\{1, \dots, k \}, \{k+1, \dots, n\}}$ and the image of $\beta_k$ is the closure of $ \widetilde V^{1, \dots, k, \{k+1, \dots, n\}}$.  Note also that $ \beta_0$ provides an inclusion of $ \overline M_{n+1} $ as a divisor in $ \overline F_n $ (as mentioned in Theorem \ref{th:infty}(2)).

Similarly, the image of $\gamma_k$ is a divisor in $ \overline M_{n+1}$ and up to the action of $ S_n$, they give all the irreducible components of $ \overline M_{n+1} \smallsetminus M_{n+1} $.

The maps $\gamma_k$ give $ \{ \overline M_{n+1}  \} $ the structure of an operad, while the maps $ \beta_k$ make $ \{ \overline F_n \} $ into a module over this operad.  As far as we know, the maps $\alpha_k$ have no operadic interpretation.  By a slight abuse of terminology, we will refer to all these maps $\alpha_k, \beta_k, \gamma_k$ as \textbf{operadic maps}.

\subsection{Compatible coverings}\label{se:compcover}
Let $\Xi $ be a finite set.  We can regard $ \overline F_n \times \Xi^n $ and $ \overline M_{n+1} \times \Xi^{n+1} $ as moduli spaces of curves where the marked points are labelled by $ \Xi $.  From this perspective, it is natural to extend our operadic maps (\ref{eq:op}) to 
\begin{equation} \label{eq:op2}
\begin{gathered}
\alpha_k^\Xi :  (\overline F_k \times \Xi^k) \times (\overline F_{n-k} \times \Xi^{n-k}) \rightarrow \overline F_n \times \Xi^n \\
 \beta_k^\Xi : (\overline F_{k+1} \times \Xi^{k+1}) \times_\Xi (\overline M_{n-k +1}  \times \Xi^{n-k+1}) \rightarrow \overline F_n \times \Xi^n \\
\gamma_k^\Xi : (\overline M_{k+2} \times \Xi^{k+2}) \times_\Xi (\overline M_{n-k +1}  \times \Xi^{n-k+1})  \rightarrow \overline M_{n+1} \times \Xi^{n+1}
\end{gathered}
\end{equation}
The definition of $ \alpha_k^\Xi $ is straightforward.  For $ \beta_k^\Xi $, we define  
$$  \Xi^{k+1} \times_\Xi \Xi^{n-k+1} = \{ (\l_1, \dots, \l_{k}, \mu), (\l_{k+1}, \dots, \l_n, \mu)\} $$ 
and then 
$$ \beta_k^\Xi(C_1,(\l_1, \dots, \l_{k}, \mu), C_2,(\l_{k+1}, \dots, \l_n, \mu)) = (\beta_k(C_1, C_2), (\l_1, \dots, \l_n)) $$
while for $ \gamma_k^\Xi $ we define  $$\Xi^{k+2} \times_\Xi \Xi^{n-k+1} = \{(\l_1, \dots, \l_k, \mu, \nu), (\l_{k+1}, \dots, \l_n, \mu) \} $$
and then 
$$ \gamma_k^\Xi(C_1,(\l_1, \dots, \l_{k}, \mu, \nu), C_2,(\l_{k+1}, \dots, \l_n, \mu)) = (\gamma_k(C_1, C_2), (\l_1, \dots, \l_n, \nu)) $$

There are actions of $ S_n $ on $\overline F_n \times \Xi^n $ and $ \overline M_{n+1} \times \Xi^{n+1} $ (once again $ S_n $ permutes the first $ n $ coordinates in $ \Xi^{n+1}$).  The above operadic maps $\alpha_k^\Xi,  \beta_k^\Xi, \gamma_k^\Xi $ are $ S_k \times S_{n-k} $ equivariant.

 A $\Xi$-\textbf{coloured operadic covering of the moduli spaces of cactus flower curves} is a sequence of covering spaces $ \CE_n \rightarrow \overline F_n(\BR) \times \Xi^n $ and $ \CX_n \rightarrow \overline M_{n+1}(\BR) \times \Xi^{n+1} $ along with the following data
 \begin{enumerate}
 \item For each $n $, actions of $ S_n $ on $  \CE_n$, $\CX_{n+1} $, compatible with the actions of $ S_n $ on $\overline F(\BR)$, $\overline M_{n+1}(\BR)$, $\Xi^n $.
\item Compatibility with each of the three types of operadic maps $\alpha_k, \beta_k, \gamma_k$.  More precisely, the data of $S_k \times S_{n-k}$-equivariant isomorphisms
\begin{equation} \label{eq:op3}
\begin{gathered} \tilde \alpha_k :  \CE_k \times \CE_{n-k} \rightarrow  \CE_n \text{ as covers of } \overline F_k(\BR) \times \Xi^k \times \overline F_{n-k}(\BR) \times \Xi^{n-k} \\
\tilde \beta_k : \CE_{k+1} \times_\Xi \CX_{n-k+1} \rightarrow  \CE_n \text{ as covers of }  (\overline F_{k+1}(\BR) \times \Xi^{k+1}) \times_\Xi (\overline M_{n-k +1}(\BR)  \times \Xi^{n-k+1}) \\
\tilde \gamma_k : \CX_{k+2} \times_\Xi \CX_{n-k+1} \rightarrow  \CX_{n+1} \text{ as covers of } (\overline M_{k+2}(\BR) \times \Xi^{k+2}) \times_\Xi (\overline M_{n-k +1}(\BR)  \times \Xi^{n-k+1})
\end{gathered}
\end{equation}
 \end{enumerate}
 Here, in the domain of $ \tilde \beta_k $, we have written $  \CE_{k+1} \times_\Xi \CX_{n-k+1}  $ to mean the preimage of $ \Xi^{k+1} \times_\Xi \Xi^{n-k+1} $ under the map $ \CE_{k+1} \times \CX_{n-k+1} \rightarrow \Xi^{k+1} \times \Xi^{n-k+1}$, and similarly for the domain of $ \tilde \gamma_k $.
  
Note that $ \tilde \beta_0 $ gives an isomorphism between $ \CX_{n+1} $ and the restriction of $\CE_n $ to $ \overline M_{n+1}(\BR) \subset \overline F_n(\BR) $.  Thus $ \CE_n $ almost determines $ \CX_{n+1} $.  There is a bit more data in $\CX_{n+1} $ since it comes with a map to $ \Xi^{n+1}$, whereas $ \CE_n $ only maps to $ \Xi^n $.
 
 Note also that $ \CX_{n+1} $ together with the data of the $ \tilde \gamma_k $ gives a $\Xi$-coloured operadic covering of the moduli spaces $ \overline M_{n+1}(\BR) $ in the sense of \cite[Def. 4.9]{HKRW}.

Given a $\Xi$-coloured operadic covering $ \CE_n, \CX_{n+1}$, we write $ \CE(C, \ul)$ for the fibre of $ \CE_n $ over a point $ (C, \ul) \in \overline F_n(\BR) \times \Xi^n$ and $ \CX(C, \ul, \mu) $ for the fibre of $ \CX_{n+1} $ over a point $ (C, \ul, \mu) \in \overline M_{n+1}(\BR) \times \Xi^{n+1}$.  Note that $ \tilde \alpha_k, \tilde \beta_k, \tilde \gamma_k $ give us (and are determined by) bijections
\begin{equation} \label{eq:operbij}
\begin{gathered}
\tilde \alpha_k : \CE(C_1, (\l_1, \dots, \l_k)) \times \CE(C_2, (\l_{k+1}, \dots, \l_n)) \rightarrow \CE(\alpha_k(C_1, C_2), (\l_1, \dots, \l_n)) \\
\tilde \beta_k : \sqcup_\mu \CE(C_1, (\l_1, \dots, \l_k, \mu)) \times \CX(C_2, (\l_{k+1}, \dots, \l_n, \mu))) \rightarrow \CE(\beta_k(C_1, C_2), (\l_1, \dots, \l_n)) \\
\tilde \gamma_k : \sqcup_\mu \CX(C_1, (\l_1, \dots, \l_k, \mu, \nu)) \times \CX(C_2, (\l_{k+1}, \dots, \l_n, \mu))) \rightarrow \CX(\gamma_k(C_1, C_2), (\l_1, \dots, \l_n, \nu)) \\
\end{gathered}
\end{equation}

There is a category $\XiCov$ of $ \Xi$-coloured operadic coverings. The objects of this category are as above and the morphisms are equivariant maps of covering spaces (trivial on the base) commuting with the $ \tilde \alpha_k, \tilde \beta_k, \tilde \gamma_k $.

\subsection{From concrete coboundary categories to operadic coverings} \label{se:FromCat}
Given a concrete coboundary category, we can use Theorem \ref{th:vCnact} to produce an operadic covering. 

More precisely let $ \cC $ be a $\Xi$-coloured concrete coboundary category.  For each $ n $, let $ \BL_n := \sqcup_{\ul \in \Xi^n} \CL(\l_1) \times \cdots \times \CL(\l_n) $; we have an obvious map $ \BL_n \rightarrow  \Xi^n$ .  By Theorem \ref{th:vCnact}, we have an action of $ vC_n $ on $ \BL_n $, which is compatible with the action of $ S_n $ on $ \Xi^n $.  Thus, via Theorem \ref{th:pi1}, we get an $S_n $-equivariant covering $ \CE_n \rightarrow \overline F_n(\BR) \times \Xi^n $.  On the other hand, just regarding $ \cC $ as a $ \Xi$-coloured coboundary category (forgetting the faithful functor $ \cC \rightarrow \set $), from the construction in section 4.14 of \cite{HKRW}, we get an $ S_n $-equivariant covering $ \CX_{n+1} \rightarrow \overline M_{n+1}(\BR) \times \Xi^{n+1}$.

\begin{prop} \label{pr:buildcover}
This defines a $\Xi$-coloured operadic covering of the moduli spaces of cactus flower curves.
\end{prop}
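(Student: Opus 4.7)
The plan is to check the three operadic compatibilities required in (\ref{eq:op3}) separately, beginning with the observation that $\tilde\gamma_k$ comes for free.  Indeed, the covers $\CX_{n+1}$ were built by applying the construction of \cite[Section 4.14]{HKRW} to the underlying $\Xi$-coloured coboundary category $\cC$ (forgetting $\Phi$), and that construction already produces $\tilde\gamma_k$ together with its $S_k \times S_{n-k}$-equivariance.  Thus the work is concentrated on $\tilde\alpha_k$ and $\tilde\beta_k$.

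For $\tilde\alpha_k$, the natural basepoint in the image of $\alpha_k$ is $p = \alpha_k(\infty_k, \infty_{n-k})$, which can be joined to $\infty_n$ by a path in $\overline F_n(\BR)$ lying in the closure of the stratum where two maximal-flower bouquets are glued.  Transporting the fiber of $\CE_n$ at $\infty_n$ along this path identifies it with $\CL(\l_1) \times \cdots \times \CL(\l_n)$, which agrees on the nose with the fiber $(\CL(\l_1) \times \cdots \times \CL(\l_k)) \times (\CL(\l_{k+1}) \times \cdots \times \CL(\l_n))$ of $\CE_k \times \CE_{n-k}$ at $(\infty_k, \infty_{n-k})$.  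We take $\tilde\alpha_k$ to be this identity of sets.  To promote it to an isomorphism of covers, we verify that the induced map $vC_k \times vC_{n-k} \to vC_n$ on equivariant fundamental groups embeds as the inclusion sending the first factor to generators acting on the first $k$ strands and the second factor to generators acting on the last $n-k$ strands; equivariance then follows from the naturality of the commutor $\sigma$ in $\cC$, applied separately to each block of tensor factors.

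For $\tilde\beta_k$, take the basepoint $p = \beta_k(\infty_{k+1}, y_{n-k+1})$, with $y_{n-k+1}$ the standard basepoint of $\overline M_{n-k+1}(\BR)$ used in \cite{HKRW}.  By construction there, the fiber $\CX(y_{n-k+1}, (\l_{k+1}, \dots, \l_n, \mu))$ is canonically $(L(\l_{k+1}) \otimes \cdots \otimes L(\l_n))_\mu$ in $\Xiset$, so the total fiber of $\CE_{k+1} \times_\Xi \CX_{n-k+1}$ at the natural basepoint in the source of $\beta_k$ is
$$\CL(\l_1) \times \cdots \times \CL(\l_k) \times \bigsqcup_\mu (L(\l_{k+1}) \otimes \cdots \otimes L(\l_n))_\mu \times \CL(\mu).$$
Iterating the monoidal-structure bijection $\phi$ of $\Phi$, the inner disjoint union is identified with $\CL(\l_{k+1}) \times \cdots \times \CL(\l_n)$, and this yields a bijection with the fiber $\CL(\l_1) \times \cdots \times \CL(\l_n)$ of $\CE_n$ at $p$; this bijection is $\tilde\beta_k$.

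The main technical obstacle is checking monodromy compatibility for $\tilde\beta_k$: the cactus-group part of the monodromy on $\CX_{n-k+1}$ acts through abstract commutors in $\cC$ applied to $L(\l_{k+1}) \otimes \cdots \otimes L(\l_n)$, whereas the corresponding part of the $vC_n$-monodromy on $\CE_n$ acts on $\CL(\l_{k+1}) \times \cdots \times \CL(\l_n)$ by $\Phi$ of the same commutors.  That these two actions are intertwined by the identification $\phi$ is exactly the content of the defining data (\ref{eq:XiCCob}) of a concrete coboundary category combined with the hexagon axiom (\ref{eq:hex1}) relating $\phi$ to the associators on $\cC$ and $\Xiset$.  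The $S_k \times S_{n-k}$-equivariance is then handled by the same cabling argument as in the proof of Theorem~\ref{th:vCnact}, since any permutation in $S_{n-k}$ preserves the $\mu$-label and hence commutes with the coproduct over $\mu$.
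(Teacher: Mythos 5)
Your construction coincides with the paper's: $\tilde\gamma_k$ is inherited from the HKRW construction (equivalently the associator), $\tilde\alpha_k$ comes from the obvious identification $\BL_k\times\BL_{n-k}\cong\BL_n$, and $\tilde\beta_k$ from the monoidal structure $\phi$ of $\Phi$, with the monodromy-compatibility checks you sketch being exactly what the paper leaves implicit. One small tightening: $\alpha_k(\infty_k,\infty_{n-k})$ is already the maximal flower point $\infty_n$, so no connecting path is needed, and in the $\tilde\beta_k$ step the fibre of $\CE_n$ at $\beta_k(\infty_{k+1},y)$ is identified with $\CL(\l_1)\times\cdots\times\CL(\l_n)$ only after parallel transport to $\infty_n$ (along a relative of $p_y^\infty$), a choice you should make explicit since the identification depends on its homotopy class.
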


\begin{proof}
    We must define the maps $ \tilde \alpha_k, \tilde \beta_k, \tilde \gamma_k$.  We will construct these in the form of the bijections (\ref{eq:operbij}).   For $ \tilde \alpha_k $, the bijection 
    $$\CE(C_1, (\l_1, \dots, \l_k)) \times \CE(C_2, (\l_{k+1}, \dots, \l_n)) \rightarrow \CE(\alpha_k(C_1, C_2), (\l_1, \dots, \l_n))$$ 
    comes from the obvious map $ \BL_k \times \BL_{n-k} \rightarrow \BL_n $.  For $ \tilde \beta_k$, the bijection $$\sqcup_\mu \CE(C_1, (\l_1, \dots, \l_k, \mu)) \times \CX(C_2, (\l_{k+1}, \dots, \l_n, \mu))) \rightarrow \CE(\beta_k(C_1, C_2), (\l_1, \dots, \l_n))$$ comes from the bijection (provided the monoidal structure on the functor $ \cC \rightarrow \set$)
    $$
 \sqcup_\mu \CL(\l_1) \times \cdots \times \CL(\l_k) \times \CL(\mu) \times (L(\l_{k+1}) \otimes \cdots \otimes L(\l_n))_\mu \rightarrow  \CL(\l_1) \times \dots \times \CL(\l_n)
    $$
    Finally, for $ \gamma_k $, the bijection  $$ \sqcup_\mu \CX(C_1, (\l_1, \dots, \l_k, \mu, \nu)) \times \CX(C_2, (\l_{k+1}, \dots, \l_n, \mu))) \rightarrow \CX(\gamma_k(C_1, C_2), (\l_1, \dots, \l_n, \nu))$$ comes from the bijection (provided by the associator) 
    $$
    \sqcup_\mu (L(\l_1) \otimes \cdots \otimes L(\l_k) \otimes L(\mu))_\nu \times (L(\l_{k+1}) \otimes \cdots \otimes L(\l_n))_\mu \rightarrow (L(\l_1) \otimes \cdots \otimes L(\l_n))_\nu
    $$
\end{proof}

The construction of this section is functorial and thus defines a functor $ \XiCCob \rightarrow \XiCov$.

\subsection{From operadic coverings to concrete coboundary categories} \label{se:FromOpCover}
Conversely, let $ \CE_n \rightarrow \overline F_n(\BR) \times \Xi^n $, $ \CX_{n+1} \rightarrow \overline M_{n+1}(\BR) \times \Xi^{n+1} $ be a $ \Xi $-coloured operadic colouring.  We will use this to define a concrete coboundary category $ \Phi : \cC \rightarrow \set$ as follows.

First, by \cite[Thm. 4.13]{HKRW}, the data $ \CX_{n+1} $ defines a coboundary category $ \cC $ whose underlying category is $ \Xiset$.  In particular the multiplicity sets in this category are defined as the fibres of $ \CX_3$; more precisely $ (L(\l_1) \otimes L(\l_2))_\mu := \CX(y, ( \l_1, \l_2, \mu)) $ where $ y $ is the unique point of $ \overline M_3$. The associator $ \alpha $ comes from parallel transport in the cover $ \CX_4 $.

Thus, it remains to construct the functor $ \Phi$.  As explained at the end of section \ref{se:ConcreteCat}, this requires two pieces of data.

First, for $ \l \in \Xi $, we set $ \CL(\l) := \CE(\infty, \l)$ where $ \infty $ is the unique point of $ \overline F_1 $.  Now, for any pair $ \l_1, \l_2 $, we must define a bijection $  \phi_{\l_1, \l_2}: \sqcup_\mu (L(\l_1) \otimes L(\l_2))_\mu \times \CL(\mu) \rightarrow \CL(\l_1) \times \CL(\l_2)  $.  To construct this bijection, we consider the covering $ \CE_2 $ over $ \overline F_2(\BR) \times \Xi^2$.   The space $ \overline F_2 $ is 1-dimensional and $ \alpha_{12} : \overline F_2 \rightarrow \BP^1 $ is an isomorphism.

On one hand, we consider the point $ \beta_0(\infty,y) = \beta_0(\overline F_1 \times \overline M_3) \subset \overline F_2 $ which is $ 0 \in \overline F_2 =  \BP^1$.  We have a bijection $$ \tilde \beta_0 : \sqcup_\mu \CE(\infty, \mu) \times \CX(y, (\l_1, \l_2, \mu)) \rightarrow \CE(\beta_0(\infty,y), (\l_1, \l_2)) $$
which thus gives us a bijection $\sqcup_\mu (L(\l_1) \otimes L(\l_2))_\mu \times \CL(\mu) \rightarrow \CE_2(0, (\l_1, \l_2)) $.

On the other hand, we consider the point $ \alpha_1(\infty,\infty) = \alpha_1(\overline F_1 \times \overline F_1 ) \subset \overline F_2$, which is $ \infty \in \overline F_2 = \BP^1$.    We have a bijection $ \tilde \alpha_1 : \CE(\infty,\l_1) \times \CE(\infty, \l_2) \rightarrow \CE(\alpha_1(\infty,\infty), (\l_1, \l_2)) $, and thus a bijection $ \CL(\l_1) \times \CL(\l_2) \rightarrow \CE_2(\infty, (\l_1, \l_2))$.

Hence we can use parallel transport in the cover $ \CE_2$ (restricted to $ (\l_1, \l_2) \in \Xi^2$) from $ 0$ to $ \infty $ through the positive real numbers in $ \overline F_2(\BR) = \BR \BP^1 $ (this is a special case of our path $ p_y^\infty$ from above) to get our desired bijection
$$
\phi_{\l_1, \l_2} : \sqcup_\mu \CL(\mu) \times (L(\l_1) \otimes L(\l_2))_\mu \rightarrow \CL(\l_1) \times \CL(\l_2) 
$$

\begin{prop}\label{pr:buildcategory} 
This defines a $\Xi$-coloured concrete coboundary category.
\end{prop}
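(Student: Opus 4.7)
The plan is to verify conditions (1)--(3) of (\ref{eq:XiCCob}). Applying Theorem 4.13 of \cite{HKRW} to the operadic covering $\CX_{n+1}$ immediately produces a $\Xi$-coloured coboundary monoidal category with underlying category $\Xiset$, whose multiplicity sets, associator $\alpha$, and commutor $\sigma$ agree with those prescribed in the construction; this takes care of the pentagon for $\alpha$, the involutive axiom for $\sigma$, and the hexagon (\ref{eq:hex}) relating $\alpha$ and $\sigma$. Faithfulness of $\Phi$ is automatic, since each $\CL(\l) = \CE(\infty, \l)$ is non-empty, being a fiber of a covering over a non-empty base.

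The content of the proposition is thus the hexagon (\ref{eq:hex1}) relating $\alpha$ and the monoidal structure $\phi$. By naturality with respect to $\Xiset$-decomposition it is enough to check this when $A = L(\l_1), B = L(\l_2), C = L(\l_3)$ are simple. In this case, I would realize both composites in (\ref{eq:hex1}) as monodromy in the cover $\CE_3 \to \overline F_3(\BR) \times \Xi^3$. Using iterated applications of the bijections $\tilde \alpha_k, \tilde \beta_k, \tilde \gamma_k$, each of the six vertices of the hexagon is canonically identified with the fiber of $\CE_3$ over a specific boundary point of $\overline F_3$: for instance, $\Phi((L(\l_1) \otimes L(\l_2)) \otimes L(\l_3))$ corresponds (via $\tilde \beta_0$ followed by $\tilde \gamma_1$) to the fiber at a point of the form $\beta_0(\infty, \gamma_1(y,y)) \in \overline F_3$. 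Each of the six arrows is then parallel transport in $\CE_3$ along a path between two such points: the three $\phi$-arrows use translates of the path $p_y^\infty$ inside $\beta_1$- or $\alpha_k$-divisors (which are copies of $\overline F_2 \cong \BP^1$), while the associator arrow $\Phi(\alpha)$ uses the associator path of $\cC$ transported into the $\beta_0$-divisor $\overline M_4 \subset \overline F_3$.

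The hexagon then reduces to showing that the two concatenated paths in $\overline F_3(\BR)$ are homotopic rel endpoints. I would exhibit the required homotopy using the cube-complex description $\widehat D_3 \cong \overline F_3(\BR)$ of \cite[\S 9]{IKLPR}: both composite paths may be deformed into a contractible neighborhood of the flower point $\infty$ analogous to $\CU_{[[n]]}(\BR)$ from (\ref{eq:Un}). The main obstacle is the careful bookkeeping required to identify all six vertices and six arrows of (\ref{eq:hex1}) with the claimed monodromy data in $\CE_3$; this amounts to tracking the $S_k \times S_{n-k}$-equivariance of $\tilde \alpha_k, \tilde \beta_k, \tilde \gamma_k$ on the relevant boundary strata and invoking their naturality on the codimension-two strata of $\overline F_3$ where multiple operadic structures overlap.
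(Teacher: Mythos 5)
Your overall strategy is the same as the paper's: invoke Theorem 4.13 of \cite{HKRW} for the coboundary structure so that only the compatibility hexagon (\ref{eq:hex1}) remains, check it on simple objects, realize each of the maps involved as parallel transport in the cover $\CE_3$ along segments lying in boundary strata of $\overline F_3(\BR)$ (the $\phi$'s along $\alpha_k$- and $\beta_1$-divisors, the associator along the $\beta_0$-divisor $\overline M_4\subset\overline F_3$), and finish by a homotopy argument using the cube complex $\widehat D_3$. One small simplification the paper makes, which you could too: since the associator in $\set$ is trivial, the hexagon collapses to a pentagon of five bijections, as in (\ref{eq:pent}).

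The one step that would fail as you stated it is the homotopy itself. The vertices of the diagram sit over points such as $\beta_0(\infty,\gamma_1(y,y))$, which lie in the divisor where \emph{all} $\delta_{ij}=0$, and the associator segment runs entirely inside that divisor; a set ``analogous to $\CU_{[[n]]}(\BR)$'' from (\ref{eq:Un}) consists of points with all $\delta_{ij}\neq 0$, so it does not contain the endpoints of your two composite paths, and no deformation rel endpoints into such a neighbourhood of the flower point $\infty$ is possible. The correct contractible region is instead the closure $\overline F_3(\BR)_+$ of the positive chamber $\{z_1<z_2<z_3\}$: by the cube-complex description it is a disk, and its boundary is the concatenation of exactly five segments --- $\beta_0([0,1])$, two $\beta_1$-segments, and the $\alpha_1$- and $\alpha_2$-segments --- whose parallel transports are precisely the five bijections of the pentagon. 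Commutativity then follows from contractibility of this disk; with this substitution your argument becomes the paper's proof.
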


\begin{proof}
Because we already have Theorem 4.13 of \cite{HKRW}, it suffices to check that the diagram (\ref{eq:hex1}) commutes.  Translating this diagram in terms of our sets and bijections, and we reach the following diagram for all $\l, \mu, \nu $:
	\begin{equation}  \label{eq:pent}
		\begin{tikzcd}
		  &\sqcup_{\rho, \gamma} L_{\l \mu}^\gamma \times L_{\gamma \nu}^\rho \times \CL(\rho) \ar[r,"{\alpha_{\l, \mu, \nu}}", outer sep = 2 pt] \ar[dl,"{\phi_{\rho, \nu}}", outer sep = -2 pt] & 
		  \sqcup_{\rho, \tau} L_{\l \tau}^\rho \times L_{\mu \nu}^\tau \times \CL(\rho) \ar[dr,"{\phi_{\l, \tau}}", outer sep = -2 pt]  & 
		  \\		  
		  \sqcup_\gamma L_{\l \mu}^\gamma \times \CL(\gamma) \times \CL(\nu)  \ar[dr,"{\phi_{\l, \mu} \times id}", outer sep = -2 pt]& & & \sqcup_\tau \CL(\l) \times L_{\mu \nu}^\tau \times \CL(\tau)  \ar[dl,"{id \otimes \phi_{\mu, \nu} }", outer sep = -2 pt] \\
				& \CL(\l) \times \CL(\mu) \times \CL(\nu) \ar[r,"id", outer sep = 2 pt] &  \CL(\l) \times \CL(\mu) \times \CL(\nu) &
		\end{tikzcd}
		\end{equation}
		where we abbreviate $ L_{\l \mu}^\gamma := (L(\l)\otimes L(\mu))_\gamma$.  The hexagon has effectively collapsed into a pentagon because of the triviality of the associator in the category of sets.
		
		To check the commutativity of this pentagon, we work inside of $ \overline F_3(\BR) $.  In particular, we can consider the dominant Weyl chamber $ \{ (z_1, z_2, z_3) \in F_3(\BR) : z_1 < z_2 < z_3 \} $ and its closure in $ \overline F_3(\BR) $ which we denote by $  \overline F_3(\BR)_+$.  From \cite[Thm 9.17]{IKLPR}, we have a homeomorphism between $ \overline F_3(\BR)$ and a cube complex $ \widehat D_3$ (which is depicted in \cite[Eg 8.9]{IKLPR}).  The image of  $  \overline F_3(\BR)_+$ under this homeomorphism is a union of two sub-2-cubes (they are the sub-2-cubes indexed by the two trees whose leaves are in the order 1, 2, 3).  From this description, we see that $  \overline F_3(\BR)_+$ is homeomorphic to a disk.  The boundary of this disk lies on 5 strata in the geometric stratification of $ \overline F_3 $.  These 5 boundary components are as follows 
		\begin{enumerate}
		\item $ \beta_0([0,1])$ --- here $ \beta_0 : \overline F_1 \times \overline M_4 \rightarrow \overline F_3 $ and we use the isomorphisms $\overline F_1 = pt, \overline M_4 = \BP^1 $.
		\item $ \beta_1([0,\infty])$ --- here $ \beta_1 : \overline F_2 \times \overline M_3 \rightarrow \overline F_3 $ and we use the isomorphisms $\overline F_2 = \BP^1, \overline M_3 = pt $. (To be precise, we are really using $ w \circ \beta_1 $ for a permutation $ w $, since we want to put marked point $ z_1, z_2 $ on the cactus curve.
		\item $ \alpha_2([0,\infty])$ --- here $ \alpha_2 : \overline F_2 \times \overline F_1 \rightarrow \overline F_3$ and we use the isomorphisms $\overline F_2 = \BP^1, \overline F_1 = pt $.
		\item $\alpha_1([0,\infty])$ --- here $ \alpha_1 : \overline F_1 \times \overline F_2 \rightarrow \overline F_3$ and we use the isomorphisms $\overline F_1 = pt, \overline F_2 = \BP^1 $.
		\item $  \beta_1([0,\infty])$ --- here $ \beta_1 : \overline F_2 \times \overline M_3 \rightarrow \overline F_3 $ and we use the isomorphisms $\overline F_2 = \BP^1, \overline M_3 = pt $. 		
		\end{enumerate}
		Examining the definitions carefully, we see that the parallel transport in $ \CE_3 $ along these 5 segments gives us the 5 bijections in the pentagon (\ref{eq:pent}). Since $  \overline F_3(\BR)_+$ is contractible, the diagram (\ref{eq:pent}) commutes.		
\end{proof}

As before, we can interpret this as a functor $ \XiCov \rightarrow \XiCCob$.

\begin{rem}
    In this section, we used the positive real locus of $ \overline F_n $ for $ n = 2, 3$.  It is an interesting question to determine this positive real locus for any $ n$.  We believe that this positive real locus is an associahedron.
\end{rem}

\subsection{From operadic coverings to concrete coboundary categories and back}

Suppose that we begin with a $\Xi$-coloured operadic covering $ \CE_n, \CX_{n+1} $ and use it to define a concrete coboundary category $ \cC \rightarrow \set$ as in the previous section.  Then we can use $ \cC \rightarrow \set $ to produce a $ \Xi$-coloured operadic covering $ \CE_n', \CX_{n+1}' $ following the procedure in section \ref{se:FromCat}.  We will need the following result.

\begin{prop} \label{th:CatCoverBack}
    There is an isomorphism of $ \Xi$-coloured operadic coverings between $\CE_n, \CX_{n+1} $ and $ \CE'_n, \CX'_{n+1}$.  
\end{prop}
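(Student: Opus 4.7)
The plan splits naturally into three parts: identify the cover $\CX'_{n+1}$ with $\CX_{n+1}$, identify $\CE'_n$ with $\CE_n$, and check that the operadic gluing data $\tilde\alpha$, $\tilde\beta$, $\tilde\gamma$ are preserved.

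The first part is essentially formal. The coboundary structure on $\cC$ built in Section~\ref{se:FromOpCover} depends only on $\CX_{n+1}$, and the cover $\CX'_{n+1}$ produced from $\cC$ in Section~\ref{se:FromCat} uses only the coboundary data (not the concrete functor $\Phi$). Thus the identification $\CX_{n+1} \cong \CX'_{n+1}$ reduces immediately to the non-concrete version of our equivalence, namely \cite[Thm.~4.13]{HKRW}.

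For the second part, I would first construct a natural fiber bijection over the maximal flower point $\infty \in \overline F_n(\BR)$. On the $\CE'_n$ side, this fiber is $\bigsqcup_{\ul} \CE(\infty,\la_1) \times \cdots \times \CE(\infty,\la_n)$; on the $\CE_n$ side, since $\infty \in \overline F_n$ is the image of the iterated $\alpha$-gluing of copies of $\infty \in \overline F_1$, iterated application of $\tilde\alpha_k$ yields a bijection of this fiber with $\bigsqcup_{\ul} \CE(\infty,\ul)$, which is $S_n$-equivariant and independent of bracketing. To promote this to an isomorphism of $S_n$-equivariant covers over all of $\overline F_n(\BR) \times \Xi^n$, I would match the monodromy actions of $\pi_1^{S_n}(\overline F_n(\BR), \infty) \cong vC_n$ on the two fibers. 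The $S_n$-part is automatic from $S_n$-equivariance of $\tilde\alpha$. For the $C_n$-part, Theorem~\ref{th:pi12}(2) identifies each generator $s_{ij}$ with a loop based at $\infty$ that passes through the divisor $\overline M_{n+1} \subset \overline F_n$ via $\beta_0$. The monodromy of $\CE_n$ around this loop factors, via $\tilde\beta_0$, through the monodromy of $\CX_{n+1}$, which by the very definition of $\cC$ is the cactus commutor $\sigma$. On the $\CE'_n$ side, $s_{ij}$ acts on $\BL_n|_{\ul}$ (as in Theorem~\ref{th:vCnact}) by precisely this same commutor $\sigma$ applied in the $i$-th through $j$-th tensor slots and then recombined via the monoidal structure $\phi$.

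The heart of the argument, and the main obstacle, is the monodromy-matching step. It requires unwinding the three distinct parallel transports that enter the construction of $\cC$ (the associator via $\CX_4$, the monoidal structure $\phi$ via $\CE_2$, and the commutor via $\CX_4$) and seeing that they reassemble correctly around the loop representing $s_{ij}$. I would carry this out by pulling everything back to the cube complex $\widehat D_n$ of \cite{IKLPR} and exploiting the half 2-cube description of $s_{ij}$ used in the proof of Theorem~\ref{th:pi12}, where contractibility of the relevant sub-complex forces the two assemblies to agree up to a unique homotopy.

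Once $\CE_n \cong \CE'_n$ and $\CX_{n+1} \cong \CX'_{n+1}$ are established, the compatibility with the operadic maps $\tilde\alpha_k, \tilde\beta_k, \tilde\gamma_k$ is essentially tautological: by the construction in Proposition~\ref{pr:buildcover}, the operadic bijections for $\CE'_n, \CX'_{n+1}$ were defined precisely so as to reproduce the original $\tilde\alpha, \tilde\beta, \tilde\gamma$ on fibers, after the identifications built in the previous steps. So this final check amounts to unwinding the definitions and checking commutativity of a few diagrams of bijections.
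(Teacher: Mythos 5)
Your overall strategy --- identify the fibres over $\infty$ by iterating $\tilde\alpha$ and then match the $vC_n$-monodromy generator by generator --- is sound in principle (a fibre bijection intertwining $\pi_1^{S_n}$ does yield an isomorphism of equivariant covers), and your reduction of the $\CX$-part to the $\overline M$-level equivalence of \cite{HKRW} is legitimate. But the step you yourself identify as the heart of the argument is exactly where the proof lies, and it is not carried out. An appeal to ``contractibility of the relevant sub-complex'' of $\widehat D_n$ does not by itself compare two a priori different bijections of the fibre over $\infty$: you must actually decompose the loop representing $s_{ij}$ into segments lying in the images of $\beta_0$ and of the $\alpha_k$, show that parallel transport in $\CE_n$ along each segment is computed by the corresponding operadic bijection together with the $n\le 2$ (resp. $n\le 3$) transports that define $\phi$, the associator and the commutor, and then reassemble. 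Moreover your assertion that the monodromy of $\CX_{n+1}$ along $\tau_{ij}$ is the cactus commutor ``by the very definition of $\cC$'' is not a definition: $\cC$ is built only from $\CX_3$ and $\CX_4$, and for general $n$ that statement is the reconstruction theorem for operadic coverings of $\overline M_{n+1}(\BR)$ (the equivalence of \cite[Thm.~4.16]{HKRW}, not just \cite[Thm.~4.13]{HKRW}). In effect your route amounts to proving Corollary \ref{th:inmonod} directly for all $n$, whereas in the paper that corollary is deduced \emph{from} the present proposition; the proposition is designed precisely so that no monodromy computation beyond the small cases is needed.

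The paper's proof takes a lighter route that you should compare with. By the construction of Section \ref{se:FromOpCover}, the identifications $\CE_1\cong\CE'_1$ and $\CE_2\cong\CE'_2$ (and the corresponding small cases on the $\CX$ side) hold essentially by definition; the operadic maps $\tilde\alpha_k,\tilde\beta_k,\tilde\gamma_k$ then transport these to an isomorphism of the two covers over all $0$- and $1$-dimensional strata of $\overline F_n$ (these strata are products, up to the $S_n$-action, of small moduli spaces, and they carry the equivariant fundamental group); covering-space theory then extends the isomorphism uniquely over all of $\overline F_n(\BR)\times\Xi^n$, and similarly for $\CX_{n+1}$. Compatibility with the operadic maps for intermediate $k$ then follows by the usual rigidity of maps of covers on connected components, which is the precise version of your final ``tautological'' step. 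If you prefer to keep your monodromy-matching plan, the missing content is the explicit segment-by-segment computation described above (essentially redoing Theorem \ref{th:pi12} at the level of the covers), which is strictly more work than the stratumwise extension argument.
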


\begin{proof}
    By the construction in section \ref{se:FromOpCover}, we have isomorphisms $ \CE_1 \rightarrow \CE'_1 $ and $ \CE_2 \rightarrow \CE'_2$.  Since both coverings are operadic, this extends (via the $ \widetilde \alpha_k $) to isomorphisms between $ \CE_n $ and $ \CE'_n $ over all 0 and 1 dimensional strata of $ \overline F_n$ (these strata are all described in \cite[\S 6.2]{IKLPR}).  By the theory over covering spaces, the isomorphism extends over all $ \overline F_n $.  A similar argument holds for $ \CX_{n+1}$.
\end{proof}

In the reverse direction, if we begin with a concrete coboundary category $ \cC \rightarrow \set$, then build an operadic covering, and then use the covering to build another concrete coboundary category $ \cD $, then it is clear from the construction that there is a canonical equivalence $ \cC \cong \cD$.  

We can summarize this as follows.
\begin{cor}
    The above functors are mutually inverse and give an equivalence of categories $ \XiCCob \rightarrow \XiCov$.
\end{cor}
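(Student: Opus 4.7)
The plan is to upgrade the two constructions $F \colon \XiCCob \to \XiCov$ from Section~\ref{se:FromCat} and $G \colon \XiCov \to \XiCCob$ from Section~\ref{se:FromOpCover} into an equivalence by exhibiting natural isomorphisms $G \circ F \cong \mathrm{id}_{\XiCCob}$ and $F \circ G \cong \mathrm{id}_{\XiCov}$, then invoking the propositions of the previous subsection to do most of the work.

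First I would handle $F \circ G$. Starting from an operadic covering $(\CE_n, \CX_{n+1})$, Section~\ref{se:FromOpCover} produces a concrete coboundary category $\cC$, and then Section~\ref{se:FromCat} produces a new operadic covering $(\CE'_n, \CX'_{n+1})$. Proposition~\ref{th:CatCoverBack} already delivers an isomorphism between these two coverings; the only thing to verify is naturality in morphisms of $\XiCov$. Since morphisms in $\XiCov$ are base-preserving equivariant maps commuting with $\tilde\alpha_k, \tilde\beta_k, \tilde\gamma_k$, and the isomorphism of Proposition~\ref{th:CatCoverBack} is constructed by extending the identifications of $\CE_1$, $\CE_2$, and $\CX_{n+1}$ through the operadic maps (and then by the general theory of covering spaces over $\overline F_n(\BR)$), naturality is automatic.

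Next I would handle $G \circ F$. Starting with $\cC \to \set$, apply $F$ to obtain $(\CE_n, \CX_{n+1})$ and then $G$ to obtain a new concrete coboundary category $\cD$. By construction of $\CX_3$, the multiplicity sets satisfy $(L(\l_1) \otimes L(\l_2))_\mu^\cD = \CX(y,(\l_1,\l_2,\mu)) = (L(\l_1)\otimes L(\l_2))_\mu^\cC$; by construction of $\CE_1$, we have $\CL(\l)^\cD = \CE(\infty,\l) = \CL(\l)^\cC$. The associator of $\cD$ is defined by parallel transport in $\CX_4$, which is exactly the covering produced from the $C_4$-action on $\sqcup_{\ul} L(\l_1)\otimes\cdots\otimes L(\l_4)$ as in \cite{HKRW}, so this parallel transport reproduces the associator of $\cC$; the commutor of $\cD$ is recovered similarly. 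The genuinely new piece is the monoidal structure $\phi_{\l_1,\l_2}^\cD$: by Section~\ref{se:FromOpCover} this is parallel transport in $\CE_2$ along the positive real locus from the fibre over $\beta_0(\infty,y)$ to the fibre over $\alpha_1(\infty,\infty)$. But $\CE_2$ is, by Section~\ref{se:FromCat}, the covering determined by the $vC_2$-action on $\sqcup \CL(\l_1)\times\CL(\l_2)$; and the $\tilde\beta_0$ and $\tilde\alpha_1$ trivializations used at the two endpoints are, by Proposition~\ref{pr:buildcover}, precisely the ones induced by the monoidal structure $\phi^\cC_{\l_1,\l_2}$ and the identity, respectively. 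Unwinding the definitions, transport along the positive real path (which is a particular case of the path $p_y^\infty$) picks up exactly $\phi^\cC_{\l_1,\l_2}$. Naturality in morphisms of $\XiCCob$ is again immediate since all the identifications are canonical.

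The main obstacle, really only a bookkeeping one, is tracking through the last unraveling for $\phi^\cD_{\l_1,\l_2}$: one must check that the chosen path $p_y^\infty$ corresponds under the identification $\pi_1^{S_2}(\overline F_2(\BR),\infty) \cong vC_2$ to the trivial element, so that parallel transport genuinely returns the bijection $\phi^\cC_{\l_1,\l_2}$ rather than some $vC_2$-twist of it. This follows from the explicit description of $\overline F_2(\BR) \cong \mathbb{RP}^1$ and the definition of the path $p_y^\infty$, together with the compatibility between the trivializations $\tilde\alpha_1$ and $\tilde\beta_0$ established inside Proposition~\ref{pr:buildcover}. Once this is verified, the two natural isomorphisms together give the desired equivalence $\XiCCob \simeq \XiCov$.
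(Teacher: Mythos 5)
Your proposal matches the paper's argument: one composite is handled by Proposition \ref{th:CatCoverBack} (the covering--category--covering direction), and the other composite is handled by unwinding the two constructions to see that the multiplicity sets, objects, associator, commutor and monoidal structure of the rebuilt category agree with the original ones — which is exactly what the paper summarizes as ``clear from the construction,'' with your identification of the positive-real path in $\overline F_2(\BR)$ as the only point needing care being the same implicit bookkeeping. Your added remarks on naturality and on the $\tilde\alpha_1$, $\tilde\beta_0$ trivializations are routine elaborations of the same route, so the proposal is correct and essentially identical in approach.
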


\subsection{Generalizations}
In this paper, we are concerned with concrete coboundary categories $ \cC $.  Such a category includes the data of a functor $ \cC \rightarrow \set $ from a coboundary category to a symmetric monoidal category, which is (potentially) incompatible with commutativity constraints.  As we have seen, this data leads to an action of the virtual cactus group (Theorem \ref{th:vCnact}) and thus to coverings of the real cactus flower spaces $ \overline F_n(\BR) $ (Proposition \ref{pr:buildcover}).  One can imagine a number of generalizations of these results.  More precisely, there are three types of monoidal categories with commutativity constraints: braided monoidal, coboundary, and symmetric monoidal.  Given any functor $ \Phi: \cC \rightarrow \cD $ where $ \cC, \cD $ are one of these three types of categories (possibly the same type), we get a such a generalization.  Let us highlight a few of these.
\begin{enumerate}
\item Suppose that $ \cC $ is braided monoidal and $ \cD $ is symmetric monoidal.  This seems fairly natural; for example, we could take $ \cC $ to be the representation category of a quantum group, $ \cD $ to be the category of vector spaces, and $ \Phi $ the usual forgetful functor.  In this case, we get an action of the virtual braid group $ vB_n $ on tensor products $ \Phi(V_1) \otimes \cdots \otimes \Phi(V_n) $ of objects; as in Theorem \ref{th:vCnact}, the braid group acts using the quantum group braiding (R-matrices) and the symmetric group acts by permutations of tensors.  This situation has been studied before in the literature, by Brochier \cite[Thm 2.3]{Br}.  On the other hand, we are not aware of a natural real algebraic variety whose equivariant fundamental group is $ vB_n $ (see however the recent paper \cite{Gav}).

\item Suppose that $ \cC $ and $ \cD $ are both symmetric monoidal.  In this case, we get an action of the virtual symmetric group $ vS_n $ (analogous to Theorem \ref{th:vCnact}), and thus a covering of the real matroid Schubert variety $ \overline \ft_n(\BR) $.  However, we do not know any natural examples of this situation.

\item Suppose that $ \cC$ and $ \cD $ are both coboundary.  This does arise in nature; we could take $ \cC =  \fg\text{-}\mathtt{Crys}$, $ \cD = \fl\text{-}\mathtt{Crys} $ for some Levi subalgebra $ \fl \subset \fg $, and $ \Phi $ to be the forgetful functor.  In this case, we will get the action of a group which is generated by two copies of the cactus group: one copy comes from the commutors in $ \gcrys $ and the other copy from the commutors in $ \cD = \fl\text{-}\mathtt{Crys} $.  The relations between these two cactus groups are similar to the relations in the virtual cactus group (section \ref{se:vCn}) except that we would need to define ``cabling'' maps $ C_{n-(j-i)} \rightarrow C_n $.  We conjecture that the resulting group is the equivariant fundamental group of the real locus of the Mau-Woodward space $Q_n$ (see \cite{MW} for the definition of this space and \cite[\S 6]{IKLPR} for its relation with $ \overline F_n$). 
 This conjecture is further motivated by Conjecture 6.10 from \cite{IKR}.  
\end{enumerate}

\section{Gaudin subalgebras}
\subsection{Inhomogeneous Gaudin subalgebras} 
Here we recall the definition and the main properties of  Gaudin subalgebras. We refer the reader to \cite{HKRW,IKR} for more detailed exposition.

Let $\widehat{\fg}_-= \fg \otimes t^{-1} \C[t^{-1}]$ be the ``negative half'' of the affine Lie algebra $\widehat{\fg}$. The \emph{universal Gaudin subalgebra} $\A\subset U(\widehat{\fg}_-)$ is the maximal commutative subalgebra uniquely determined as the centralizer of the quadratic element $\sum\limits_{a=1}^{\dim \fg}(x_a\otimes t^{-1})(x^a\otimes t^{-1})$, where $\{x_a\}$ and $\{x^a\}$ are dual bases in $\fg$ with respect to a non-degenerate invariant inner product.

For any $w\in\mathbb{C}$, we have the corresponding evaluation homomorphism $\varphi_w:\fg \otimes t^{-1} \C[t^{-1}] \rightarrow \fg$ evaluating $t$ at $w$. Next, we have a Lie algebra map $ \fg \otimes t^{-1} \C[t^{-1}] \rightarrow \fg $ given by extracting the coefficient of $t^{-1} $ (here the codomain $ \fg $ is endowed with the trivial Lie bracket).  This yields an algebra morphism $\varphi_\infty:U(\widehat{\fg}_-) \rightarrow S(\fg) $, called \emph{evaluation at $ \infty$}. Thus given non-zero $ w_1, \dots, w_n \in \C$, we can define a map
$$
\varphi_{w_1,\ldots,w_n,\infty}=\varphi_{w_1}\otimes\ldots\otimes\varphi_{w_n}\otimes\varphi_{\infty}:U(\widehat{\fg}_-) \rightarrow U(\fg) \otimes \cdots \otimes U(\fg) \otimes S(\fg).
$$
Let $\uz= (z_1, \dots, z_n) $ be a collection of pairwise distinct complex numbers.  The algebra $\A(\uz,\infty):=\varphi_{w-z_1,\ldots,w-z_n,\infty}(\A)$ is a commutative subalgebra in the diagonal invariants $(U(\fg)^{\otimes n} \otimes S(\fg))^{\fg}$, which does not depend on the choice of $w\in\BC\backslash\{z_1,\ldots,z_n\}$. Given $\chi\in\fg$ we can define the evaluation at $ \chi $, $S(\fg)\to\BC$ (after using the Killing form to identify $ \fg = \fg^* $).  We define $ \varphi_{w_1,\ldots,w_n, \chi} $ to be the composite map
$$
U(\widehat{\fg}_-) \xrightarrow{\varphi_{w_1,\ldots,w_n,\infty}} U(\fg) \otimes \cdots \otimes U(\fg) \otimes S(\fg) \rightarrow U(\fg)^{\otimes n}
$$
where the second map evaluates the last tensor factor at $\chi$. Following $ \CA $ under the homomorphism $ \varphi_{w - z_1, \dots, w - z_n, \chi} $ defines the \emph{inhomogeneous Gaudin algebra} $ \CA_\chi(\uz) \subset U(\fg)^{\otimes n} $ (as above we can choose any $ w $ distinct from $ z_1, \dots, z_n$).  This is a commutative subalgebra of $ U(\fg)^{\otimes n} $ which is maximal commutative when $ \chi $ is regular and $ z_1, \dots, z_n $ are distinct. Since $\A$ lies in $\fg$-invariants in $U(\widehat{\fg}_-)$, the subalgebra $\A_{\chi}(\uz)$ lies in the diagonal invariants of $\fz_\fg(\chi)$, the centralizer of $\chi$ in $\fg$.

We will also be interested in the subalgebras $\A_{\chi_0}(\uz)$ for non-regular $\chi_0\in\fg$.  These algebras are smaller than $\A_{\chi}(\uz)$ when $\chi$ is regular because all their elements are invariant under the adjoint action of the centralizer subalgebra $\fz_\fg(\chi_0)\subset\fg$.  

\begin{prop}
    For semisimple $\chi_0$, the subalgebra $\A_{\chi_0}(\uz)$ is a maximal commutative subalgebra in $(U(\fg)^{\otimes n})^{\fz_\fg(\chi_0)}$.
\end{prop}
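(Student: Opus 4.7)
The plan is to establish maximality by deforming $\chi_0$ to a nearby regular semisimple element of $\fg$ and invoking the regular case, combined with a passage to the associated graded and a classical dimension count. Set $L := \fz_\fg(\chi_0)$. Containment $\A_{\chi_0}(\uz) \subset (U(\fg)^{\otimes n})^L$ and commutativity of $\A_{\chi_0}(\uz)$ have already been recorded above (the containment coming from the $\fg$-invariance of $\A$ and the $L$-equivariance of evaluation at $\chi_0$), so only maximality remains.

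Fix a Cartan subalgebra $\fh \subset L$; since $L$ is a Levi of $\fg$ (the centralizer of a semisimple element), $\fh$ is also a Cartan of $\fg$, and I may arrange $\chi_0 \in \fh$. Choose $\eta \in \fh$ that is regular in $L$, i.e.\ satisfies $\alpha(\eta) \ne 0$ for every root $\alpha$ of $L$, and set $\chi_t := \chi_0 + t\eta$ for $t \in \C$. The roots of $L$ with respect to $\fh$ are precisely those roots of $\fg$ vanishing on $\chi_0$, so for such $\alpha$ we have $\alpha(\chi_t) = t\alpha(\eta) \ne 0$ when $t \ne 0$; for the remaining roots, $\alpha(\chi_0) \ne 0$, hence $\alpha(\chi_t) \ne 0$ for small $t$. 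Thus $\chi_t$ is regular semisimple in $\fg$ for all but finitely many $t$, and the regular case of the proposition (already recorded in the text above) says that $\A_{\chi_t}(\uz)$ is maximal commutative in $U(\fg)^{\otimes n}$ for such $t$.

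The family $\{\A_{\chi_t}(\uz)\}_t$ arises by specializing the fixed universal algebra $\A(\uz, \infty) \subset (U(\fg)^{\otimes n} \otimes S(\fg))^\fg$ along the line $t \mapsto \chi_t$ in $\fg$. Passing to the associated graded with respect to the PBW filtration reduces the claim to the Poisson assertion that $\gr \A_{\chi_0}(\uz) \subset (S(\fg)^{\otimes n})^L$ is maximal Poisson-commutative. The main obstacle, and the technical heart of the proof, is to identify $\gr \A_{\chi_0}(\uz)$ with a classical inhomogeneous Hitchin-type Lagrangian on the Hamiltonian reduction $(\fg^*)^n \sslash_{\chi_0} L$, and to verify by a dimension count that its transcendence degree equals $\dim (S(\fg)^{\otimes n})^L$ minus half the dimension of a generic symplectic leaf of that reduced Poisson variety, with enough Casimirs of $(S(\fg)^{\otimes n})^L$ lying in $\gr \A_{\chi_0}(\uz)$ to fill out the remaining dimensions. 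Once this classical maximality is in place, a standard Rees-algebra lifting argument recovers the quantum statement.
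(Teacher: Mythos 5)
The paper itself states this proposition as a recalled result (the section opens by referring to \cite{HKRW,IKR} for details) and gives no argument, so there is no in-paper proof to compare against; judged on its own, your text is a strategy outline rather than a proof. The decisive step --- identifying $\gr \A_{\chi_0}(\uz)$ inside $(S(\fg)^{\otimes n})^{\fz_\fg(\chi_0)}$ and proving it is maximal Poisson-commutative there --- is exactly what you label ``the main obstacle and the technical heart'' and then do not carry out. That step is essentially equivalent in difficulty to the proposition itself, so nothing has been reduced. Moreover, even the criterion you propose for it is not sufficient as phrased: having transcendence degree equal to $\dim\bigl((\fg^*)^n\sslash \fz_\fg(\chi_0)\bigr)$ minus half the dimension of a generic symplectic leaf makes a Poisson-commutative subalgebra \emph{complete} (maximal in the sense of functional dimension), but it does not by itself make it maximal as a subalgebra; one needs an additional argument of Panyushev--Yakimova type (e.g.\ algebraic closedness of $\gr\A_{\chi_0}(\uz)$ in the invariant ring), and also a flatness/PBW statement guaranteeing that $\gr\A_{\chi_0}(\uz)$ is as large as the classical algebra you want to identify it with.

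The deformation $\chi_t=\chi_0+t\eta$ also buys you nothing and is in fact never used after it is set up. The point, which the paper makes explicitly right after the proposition, is that for non-regular $\chi_0$ the algebra $\A_{\chi_0}(\uz)$ is \emph{smaller} than the regular ones: the flat ($t\to 0$) limit of the family $\A_{\chi_t}(\uz)$ is a commutative subalgebra strictly containing $\A_{\chi_0}(\uz)$ (it picks up extra elements coming from $\fz_\fg(\chi_0)$), so maximality for regular parameters does not specialize to $\chi_0$. In addition, the regular case gives maximality inside all of $U(\fg)^{\otimes n}$, which is a different ambient algebra from $(U(\fg)^{\otimes n})^{\fz_\fg(\chi_0)}$; there is no formal implication from the former statement about a family member at $t\ne 0$ to the latter statement about the specialized algebra at $t=0$. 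The final Rees-algebra/filtration argument you invoke (symbols of the centralizer Poisson-commute with $\gr\A_{\chi_0}(\uz)$, then induct on filtration degree) is indeed standard, but it only works once the classical maximality is actually established --- which is the part that is missing.
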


In particular, for $ \chi = 0$, we obtain the \emph{homogeneous} Gaudin algebras $\A_0(\uz)= \CA(\uz)$ which is a maximal commutative subalgebra in $(U(\fg)^{\otimes n})^\fg $.

We will now record some elementary properties of these algebras (we refer the reader to \cite{HKRW} for more details). First, we consider the invariance of these algebras under natural changes of the parameters.  The following result was proven in \cite[Lemma 9.2]{HKRW}.
\begin{lem} \label{le:InvarianceA}
Let $ \chi \in \fg, (z_1, \dots, z_n) \in \C^n $.   For all $ a \in \C, s \in \C^\times $, we have
\begin{align*}
\A_{\chi}(z_1 + a, \dots, z_n + a) &= \CA_\chi(z_1, \dots, z_n) \\
\A_{\chi}(sz_1, \dots, sz_n) &= \CA_{s\chi}(z_1, \dots, z_n)
\end{align*}
\end{lem}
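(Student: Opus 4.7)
The plan is to exploit the fact that the defining map $\varphi_{w-z_1,\ldots,w-z_n,\chi}$ transforms in a controlled way under dilations and translations of $\BA^1$, combined with the fact that the universal Gaudin subalgebra $\A \subset U(\widehat{\fg}_-)$ is preserved by the corresponding automorphism of $U(\widehat{\fg}_-)$. The translation statement is immediate from the definition: since $\A_\chi(\uz) = \varphi_{w-z_1,\ldots,w-z_n,\chi}(\A)$ is independent of the choice of $w \notin \{z_1,\ldots,z_n\}$, replacing $w$ by $w+a$ and each $z_i$ by $z_i+a$ simultaneously leaves every difference $w-z_i$ unchanged, and hence produces the same evaluation homomorphism and the same subalgebra of $U(\fg)^{\otimes n}$.

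For the dilation statement, I will introduce the automorphism $\tau_s: U(\widehat{\fg}_-) \to U(\widehat{\fg}_-)$ induced by the Lie algebra map $x \otimes t^{-k} \mapsto s^{-k}(x \otimes t^{-k})$, corresponding to the reparametrization $t \leadsto s^{-1}t$. Under $\tau_s$ the defining quadratic element $\Omega = \sum_a (x_a \otimes t^{-1})(x^a \otimes t^{-1})$ is rescaled by $s^{-2}$ and therefore centralizes the same subalgebra; thus $\tau_s(\A) = \A$. The heart of the argument is then the identity
\[\varphi_{sw - sz_1,\ldots,sw - sz_n,\, \chi} \;=\; \varphi_{w-z_1,\ldots,w-z_n,\, s\chi} \circ \tau_s\]
of algebra maps $U(\widehat{\fg}_-) \to U(\fg)^{\otimes n}$, which I will verify on the generators $x \otimes t^{-k}$. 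For $k \ge 2$ both sides equal $s^{-k}\sum_i (w-z_i)^{-k} x^{(i)}$, while for $k=1$ the factor $s^{-1}$ produced by $\tau_s$ exactly cancels the factor $s$ produced by replacing $\chi$ by $s\chi$ in the evaluation $\varphi_\infty(x \otimes t^{-1}) \mapsto \langle \chi, x\rangle$, so both sides equal $s^{-1}\sum_i (w-z_i)^{-1} x^{(i)} + \langle \chi, x\rangle$. Choosing the auxiliary point $w = sw_0$ for any $w_0 \notin \{z_i\}$ in the defining formula for $\A_\chi(sz_1,\ldots,sz_n)$ then gives
\[\A_\chi(sz_1,\ldots,sz_n) \;=\; \varphi_{sw_0-sz_1,\ldots,sw_0-sz_n,\, \chi}(\A) \;=\; \varphi_{w_0-z_1,\ldots,w_0-z_n,\, s\chi}(\tau_s(\A)) \;=\; \A_{s\chi}(z_1,\ldots,z_n).\]

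The only real obstacle is the bookkeeping of two interacting scalings: $\tau_s$ produces a factor $s^{-k}$ on each finite evaluation $\varphi_{w-z_i}$, while passing from $\chi$ to $s\chi$ produces a factor $s$ on the evaluation at $\infty$. These interact correctly precisely because $\varphi_\infty$ extracts only the degree-one coefficient in $t^{-1}$, where the $\tau_s$-scaling contributes exactly $s^{-1}$; this cancellation is what forces the shift $\chi \mapsto s\chi$ in the statement of the lemma and makes the proof go through uniformly in $\chi$.
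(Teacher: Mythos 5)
Your proof is correct and is essentially the standard argument behind the cited result \cite[Lemma 9.2]{HKRW}: translation invariance follows from the freedom in the auxiliary point $w$, and the dilation identity follows from the grading automorphism $\tau_s$ of $U(\widehat{\fg}_-)$, which fixes $\A$ because it rescales the quadratic element, and which intertwines the evaluation maps at the cost of replacing $\chi$ by $s\chi$ in the degree-one term. One cosmetic remark: the map $x\otimes t^{-k}\mapsto s^{-k}\,x\otimes t^{-k}$ corresponds to the substitution $t\mapsto st$ rather than $t\mapsto s^{-1}t$, but this does not affect your computation.
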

Because of this lemma, the subalgebra $ \CA_\chi(z_1, \dots, z_n) $ only depends on $ (z_1, \dots, z_n) $ as a point of $ F_n = \C^n \setminus \Delta /\C$ and the subalgebra $\CA(z_1, \dots, z_n) $ only depends on $ (z_1, \dots, z_n) $ as a point of $ M_{n+1} = (\C^n \setminus \Delta) / \Cx \ltimes \C$.

Next, we consider what happens to these algebras under the action of the symmetric group $ S_n $ on $ U(\fg)^{\otimes n} $:
\begin{lem} \label{le:ConjugateA}
Let $ \chi \in \fg, (z_1, \dots, z_n) \in \C^n $. For all $ \sigma \in S_n $, we have
$$ \sigma(\CA_\chi(z_1, \dots, z_n)) = \CA_\chi(z_{\sigma(1)}, \dots, z_{\sigma(n)})$$
\end{lem}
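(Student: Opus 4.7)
The plan is to reduce this to the $S_n$-equivariance of the evaluation homomorphism $\varphi_{w_1,\ldots,w_n,\infty} : U(\widehat{\fg}_-) \to U(\fg)^{\otimes n} \otimes S(\fg)$. By construction, this map is obtained by composing the iterated coproduct $\Delta^{(n)} : U(\widehat{\fg}_-) \to U(\widehat{\fg}_-)^{\otimes (n+1)}$ with the tensor product $\varphi_{w_1} \otimes \cdots \otimes \varphi_{w_n} \otimes \varphi_\infty$. Since $U(\widehat{\fg}_-)$ is cocommutative, its iterated coproduct is invariant under any permutation of the output tensor factors. Restricting to the subgroup $S_n \subset S_{n+1}$ fixing the last factor, this gives the equivariance
$$
\sigma \circ \varphi_{w_1,\ldots,w_n,\infty} = \varphi_{w_{\sigma^{-1}(1)},\ldots,w_{\sigma^{-1}(n)},\infty}
$$
for every $\sigma \in S_n$, where $\sigma$ acts on the codomain by permuting the first $n$ factors of $U(\fg)^{\otimes n}$ and trivially on the $S(\fg)$-factor. (The choice between $\sigma$ and $\sigma^{-1}$ is only a matter of fixing the left/right convention for the $S_n$-action, and will be made to match the statement.)

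I would then post-compose with evaluation of the $S(\fg)$-factor at $\chi \in \fg$. Since this evaluation affects only the last tensor factor, and $\sigma$ acts trivially there, it commutes with the $S_n$-action on $U(\fg)^{\otimes n}$. Specializing $w_i = w - z_i$ for some fixed $w \in \BC \setminus \{z_1,\ldots,z_n\}$ (this set being $\sigma$-invariant, the same $w$ works on both sides), I obtain
$$
\sigma \circ \varphi_{w-z_1,\ldots,w-z_n,\chi} = \varphi_{w - z_{\sigma^{-1}(1)},\ldots, w - z_{\sigma^{-1}(n)},\chi}.
$$
Applying both sides to the universal Gaudin subalgebra $\CA \subset U(\widehat{\fg}_-)$ and taking images yields $\sigma(\CA_\chi(z_1,\ldots,z_n)) = \CA_\chi(z_{\sigma(1)},\ldots,z_{\sigma(n)})$, as claimed.

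The argument is essentially formal, so I do not anticipate any real obstacle. The one point that warrants a careful check is the equivariance of the iterated evaluation map stated above, but this is immediate from cocommutativity of the coproduct on $U(\widehat{\fg}_-)$ together with the obvious naturality of tensoring evaluations when their parameters are permuted.
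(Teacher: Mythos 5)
Your argument is correct: the permutation-equivariance of $\varphi_{w_1,\ldots,w_n,\infty}$ in its first $n$ evaluation points follows, exactly as you say, from cocommutativity of $U(\widehat{\fg}_-)$ together with the fact that evaluation at $\chi$ only touches the last tensor factor, and the $\sigma$ versus $\sigma^{-1}$ discrepancy is just the usual convention choice for the place-permutation action of $S_n$ on $U(\fg)^{\otimes n}$. The paper records this lemma as an elementary property without proof (deferring to \cite{HKRW}), and your cocommutativity argument is precisely the standard justification it implicitly relies on.
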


Next, we consider the compactifications of these families of subalgebras.  We proved the following results in \cite{Rcactus} and \cite{IKR}.

\begin{thm}
\begin{enumerate}
    \item The map  $ M_{n+1} \rightarrow \{ \text{subalgebras of $ ((U\fg)^{\otimes n})^{\fg} $} \}$ extends to an inclusion $ \overline M_{n+1} \rightarrow \{ \text{subalgebras of $ ((U\fg)^{\otimes n})^{\fg} $} \}$.
    \item  Assume that $ \chi $ is regular. The map $ F_n \rightarrow  \{ \text{subalgebras of $ (U\fg)^{\otimes n} $} \}$ extends to an inclusion $ \overline F_n \rightarrow \{ \text{subalgebras of $ (U\fg)^{\otimes n} $} \}$.
\end{enumerate}
\end{thm}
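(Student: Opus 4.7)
The plan is to embed both families into a suitable Grassmannian (or Hilbert scheme) of commutative subalgebras of fixed Hilbert dimension inside $((U\fg)^{\otimes n})^\fg$, respectively $(U\fg)^{\otimes n}$, and then produce explicit flat limits as $\uz$ approaches each boundary stratum. The invariance properties of Lemma~\ref{le:InvarianceA} guarantee that the map is well-defined on the open loci $M_{n+1}$ and $F_n$; the task is to continuously extend it across the boundary divisors described after (\ref{eq:op}).

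For part (1), first I would equip $U(\widehat{\fg}_-)$ with its PBW filtration and realise each $\A(\uz) \subset ((U\fg)^{\otimes n})^\fg$ as a flat family of finite-dimensional subspaces. The operadic structure of $\overline M_{n+1}$ suggests the correct boundary behaviour: on the stratum associated to a stable curve $C = C_1 \cup_p C_2$ with the marked points distributed between the two components, the limiting subalgebra should be the image, under the diagonal embedding at the node $p$, of the tensor product $\A(C_1) \otimes \A(C_2)$ of the two smaller Gaudin algebras attached to the components (each with $p$ adjoined as an extra marked point). I would verify this inductively on the dual tree, using that the Feigin--Frenkel--Reshetikhin construction at the critical level is itself compatible with operadic insertions coming from attaching copies of $\BP^1$ at marked points. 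Flatness of the limit then reduces to knowing the Hilbert dimensions of these simpler Gaudin subalgebras, which are constant by the Poisson-centre interpretation of $\gr\A$.

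For part (2), I would exploit the deformation $\overline\CF_n \to \BA^1$ whose generic fibre is $\overline M_{n+2}$ and whose special fibre is $\overline F_n$. The trigonometric Gaudin family $\A^{trig}_\theta$ extends over $\overline M_{n+2}$ via the quantum Hamiltonian reduction construction of \cite{IKR}, which itself reduces to part (1). Using Lemma~\ref{le:InvarianceA} to identify the effect of rescaling $\uz$ with rescaling $\chi$, one sees that the limit $\varepsilon \to 0$ of the trigonometric family specialises precisely to the inhomogeneous family $\A_\chi(\uz)$ over $\overline F_n$. Flatness of the total family over $\overline\CF_n$ then yields the required extension on the special fibre.

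The main obstacle is establishing injectivity, i.e.\ that the map is an \emph{inclusion} rather than just a set-theoretic extension. Inside a generic stratum, distinct parameters yield distinct subalgebras because each is maximal commutative in its appropriate centraliser. Separating subalgebras attached to boundary curves with different combinatorial types requires recovering the dual tree from the quadratic Hamiltonians, in the spirit of Aguirre--Felder--Veselov \cite{AFV}. Separating boundary points within the same stratum then requires a scheme-theoretic argument controlling the closure of the generic fibre, so that one knows the limit is \emph{exactly} the factorised algebra rather than some strictly larger commutative subalgebra containing it. Managing these closures at deep boundary strata, where many operadic factorisations occur simultaneously, is where I expect the bulk of the technical work to lie.
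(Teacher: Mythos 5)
You should first be aware that this paper contains no proof of the statement: it is explicitly recalled from \cite{Rcactus} (part (1)) and \cite{IKR} (part (2)), so the only meaningful comparison is with the strategy of those references. Your outline does broadly mirror that strategy (take the closure of the family inside a parameter space of subspaces with fixed Poincar\'e series, describe the boundary limits operadically, and use Aguirre--Felder--Veselov type results on the quadratic Hamiltonians for injectivity), but as it stands it is a plan with the decisive steps missing. The heart of the matter is well-definedness: you must show that the limit subalgebra depends only on the boundary point of $\overline M_{n+1}$ (resp.\ $\overline F_n$) and not on the one-parameter family used to approach it, and that this limit is \emph{exactly} the operadically factorised algebra; in the references this is done by exhibiting the factorised algebra inside every limit and then comparing Poincar\'e series, and it is precisely the part you defer to ``where the bulk of the technical work lies.'' Moreover your description of the boundary algebra is imprecise in a way that matters for that comparison: the limit is not the tensor product $\CA(C_1)\otimes\CA(C_2)$ under a diagonal embedding, but $\Delta^{n-k}(\CA(C_1))\otimes_{Z_{n-k}} j_k(\CA(C_2))$, a product over the shared centre as in Theorem \ref{th:operalg}; ignoring the identification over $Z_{n-k}$ throws off the dimension count you want to use.

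For part (2) your proposed route is essentially circular relative to \cite{IKR}. There, the extension of the inhomogeneous family $\A_\chi$ over $\overline F_n$ is established first, by the same method as part (1) (with the result of \cite{IKLPR} identifying $\overline F_n$ with the closure of the family of spans of quadratic inhomogeneous Hamiltonians supplying injectivity), and only afterwards is the combined trigonometric/inhomogeneous family over $\overline\CF_n$ constructed (quoted here as Theorem \ref{th:familyCF}); that total-family statement is strictly harder than the one you are trying to prove, so ``flatness of the total family over $\overline\CF_n$'' is not available as an input. Note also that Lemma \ref{le:InvarianceA} only explains why $\A_\chi(\uz)$ descends to $F_n$ (translation invariance) and how dilation trades into rescaling $\chi$; it does not by itself produce the $\varepsilon\to 0$ degeneration of trigonometric algebras. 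Finally, you correctly identify that injectivity should come from the quadratic part (AFV for $\overline M_{n+1}$, and its $\overline F_n$ analogue for the inhomogeneous case), but you do not carry this out, so the statement that the extended map is an inclusion remains unproved in your write-up.
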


Thus for any $ C\in \overline F_n $, we have the subalgebra $\A_\chi(C) \subset (U\fg)^{\otimes n} $.  We now describe these algebras on the boundary strata of the moduli spaces.

We begin by recalling the following algebra maps
\begin{gather*}
\Delta^{n-k} : (U \fg)^{\otimes k+1} \rightarrow (U \fg)^{\otimes n} \\ \Delta^{n-k}(x_1, \dots, x_{k+1}) = (x_1, \dots, x_k, x_{k+1}, \dots, x_{k+1}) \text{ for $(x_1, \dots, x_{k+1}) \in \fg^{\oplus k+1}$}
\end{gather*}
This is a special case of the map $ \Delta^\CB $ from \cite{IKR}, where $ \CB = \{ \{1\}, \dots, \{k\}, \{k+1, \dots, n\}\}$.

We also have the map
\begin{gather*}
j_k : (U \fg)^{\otimes n-k} \rightarrow (U \fg)^{\otimes n} \\ j_k(x_{k+1}, \dots, x_n) = (0, \dots, 0, x_{k+1}, \dots, x_n) \text{ for $(x_{k+1}, \dots, x_n) \in \fg^{\oplus n-k}$} 
\end{gather*}
In the notation from \cite{IKR}, this map is denoted $ j_{k+1, \dots, n}^n$.  We also let  $ Z_{n-k} = \Delta^{n-k}(1 \otimes \cdots \otimes 1 \otimes Z(U\fg))$, the diagonal embedding of the centre into the $ k+1, \dots, n $ tensor factors.  With all this notation, we have the following operadic description of the limit inhomogeneous and homogeneous Gaudin algebras.

\begin{thm} \label{th:operalg}
\begin{enumerate} 
    \item For any $ C_1 \in \overline F_k, C_2 \in \overline F_{n-k}$, we have
    $$\CA_\chi(\alpha_k(C_1, C_2)) = \CA_\chi(C_1) \otimes \CA_\chi(C_2) $$
    \item For any $ C_1 \in \overline F_{k+1}, C_2 \in \overline M_{n-k+1}$, we have
    $$ \CA_\chi(\beta_k(C_1, C_2)) = \Delta^{n-k}(\CA_\chi(C_1)) \otimes_{Z_{n-k}}  j_k(\CA_\chi(C_2)) 
    $$
    \item For any $ C_1 \in \overline M_{k+2}, C_2 \in \overline M_{n-k+1}$, we have
    $$ \CA(\gamma_k(C_1, C_2)) =  \Delta^{n-k}(\CA(C_1)) \otimes_{Z_{n-k}} j_k(\CA(C_2))  
    $$
\end{enumerate}
\end{thm}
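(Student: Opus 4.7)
The plan is to analyze the degeneration of the evaluation homomorphism $\varphi_{w - z_1, \dots, w - z_n, \chi}$ as $\uz$ approaches each of the boundary strata of $\overline F_n$ or $\overline M_{n+1}$, exploiting the translation and scaling invariance of Lemma~\ref{le:InvarianceA} together with the factorization structure of the universal Gaudin algebra $\CA \subset U(\widehat{\fg}_-)$ inherited from the Feigin-Frenkel centre at the critical level. Parts (2) and (3) are essentially extractable from the compactification results of \cite{IKR} and \cite{Rcactus}; the genuine novelty lies in part (1).

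For part (3), which concerns homogeneous Gaudin algebras, I would directly recall the argument of \cite{Rcactus}: approaching $\gamma_k(C_1, C_2)$ corresponds to a cactus sub-curve bubbling off the marked points $k+1, \dots, n$, and in this limit the Feigin-Frenkel construction produces the fibre product of the two Gaudin algebras over the diagonal centre $Z_{n-k}$. For part (2), I plan to parametrize the approach to $\beta_k(C_1, C_2)$ by a family in which the marked points $k+1, \dots, n$ coalesce onto the $(k+1)$-th point of $C_1$ with their rescaled internal configuration converging to $C_2$; explicitly, $(z_1, \dots, z_k, z_{k+1} + s w_1, \dots, z_{k+1} + s w_{n-k})$ with $s \to 0$, where $(w_j)$ represents $C_2 \in \overline M_{n-k+1}$. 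By Lemma~\ref{le:InvarianceA}, the shift by $\chi$ is invisible at the microscopic scale $s$, so the contribution of the coalescing factors tends to the homogeneous Gaudin algebra $j_k(\CA(C_2))$, while at the macroscopic scale all coalescing points merge, forcing the remaining contribution to be the diagonal embedding $\Delta^{n-k}(\CA_\chi(C_1))$. The two pieces commute and share the diagonal centre $Z_{n-k}$, yielding the fibre product description.

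For part (1), the new statement, I would approach $\alpha_k(C_1, C_2)$ along the family $(z_1, \dots, z_k, z_{k+1} + R, \dots, z_n + R)$ with $R \to \infty$, which by Lemma~\ref{le:InvarianceA} defines a genuine path in $\overline F_n$ transverse to the boundary stratum (translation invariance only allows a uniform shift, so the relative shift by $R$ is a non-trivial one-parameter deformation). After suitably translating the spectral parameter $w$, the evaluations at the first $k$ and at the last $n-k$ points live on asymptotically disjoint scales. Using the factorization property of the Feigin-Frenkel centre -- its centre over $n$ asymptotically separated punctures factors as a tensor product of the centres over the two sub-clusters -- I would conclude that the image of $\CA$ in the limit is contained in $\CA_\chi(C_1) \otimes \CA_\chi(C_2)$ inside $(U\fg)^{\otimes n}$.

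The main obstacle will be in part (1), namely verifying that the limit algebra is exactly the tensor product and not merely contained in it. I plan to handle this by a Poincar\'e-series comparison: flatness of the family $\CA_\chi(\uz)$ over $\overline F_n$, combined with the maximality of $\CA_\chi(C_i)$ for regular $\chi$, forces equality of Hilbert series and hence of algebras. A parallel Hilbert-series argument settles the analogous containment-versus-equality issue in parts (2) and (3), exploiting the fact that the diagonal-centre fibre product has the expected size.
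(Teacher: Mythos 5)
Your overall strategy is the right one, and in fact it is essentially the argument behind the statement: the paper gives no proof of Theorem \ref{th:operalg} at all --- all three parts are recalled from \cite{Rcactus} (the homogeneous case (3)) and \cite{IKR} (the inhomogeneous cases, including the $\alpha_k$-strata), so your impression that part (1) is genuinely new is a slight misreading; the references argue exactly as you propose, by degenerating along one-parameter families transverse to the boundary strata (clusters separating to infinite relative distance for $\alpha_k$, coalescing at scale $s$ with Lemma \ref{le:InvarianceA} turning the rescaled cluster into a Gaudin algebra with parameter $s\chi$ for $\beta_k,\gamma_k$) and then upgrading a containment to an equality by a flatness/Poincar\'e-series count. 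One point in your write-up needs repair, though. In part (1) you claim the containment $\subseteq$: ``the image of $\CA$ in the limit is contained in $\CA_\chi(C_1)\otimes\CA_\chi(C_2)$.'' For a \emph{fixed} element $X$ of the universal algebra, the $R\to\infty$ limit of $\varphi_{\cdot,\chi}(X)$ does land in the product, but the limit subalgebra $\CA_\chi(\alpha_k(C_1,C_2))$ is the flat (Grassmannian) limit and contains in addition rescaled limits of \emph{varying} elements, which your factorization heuristic does not control; so the stated $\subseteq$ is not established for the whole limit algebra. The standard and cleaner implementation proves $\supseteq$ --- exhibit explicit families of elements of $\CA_\chi(\uz_R)$ converging to generators of $\CA_\chi(C_1)\otimes 1$ and $1\otimes\CA_\chi(C_2)$ (resp.\ of $\Delta^{n-k}(\CA_\chi(C_1))$ and $j_k(\CA(C_2))$) --- and then concludes equality either by your Poincar\'e-series comparison (the generator degrees of $\gr\CA_\chi$ are additive over marked points, with the shared centre accounting for the $\otimes_{Z_{n-k}}$ in (2),(3)), or, in part (1), even more cheaply from the fact that a tensor product of maximal commutative subalgebras is maximal commutative while the limit is commutative. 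Finally, note that in part (2) the second factor should be the homogeneous algebra $\CA(C_2)$ (as used later in the proof of Theorem \ref{th:OperadFromEigenlines}); your proposal silently and correctly makes this reading.
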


\subsection{Diagonalization of Gaudin algebras}

Let $ \l_1, \dots, \l_n $ be dominant weights of $ \fg$.  Let $ V(\ul) = V(\l_1) \otimes \cdots \otimes V(\l_n)$ be the corresponding tensor product of irreducible representations.  Also let $ \mu $ be another dominant weight and consider the multiplicity space $ \Hom_\fg(V(\mu), V(\ul))$.  

By construction, for any $ C \in \overline F_n $, we have an action of $ \CA_\chi(C) $ on $ V(\ul) $, and for any $ C \in \overline M_{n+1}$, we have an action of $ \CA(C)$ on $ \Hom_\fg(V(\mu), V(\ul))$.  So it is natural to try to diagonalize the action of these commutative algebras.  First, we have the following results \cite{R4} and \cite[Theorem 8.7]{IKR} which show that the actions are always cyclic.

\begin{thm}\label{th:cyclic-inhomogeneous}
\begin{enumerate}
    \item Assume that $ \chi \in \fh^{reg}$.  For any $ C \in \overline F_n $, the subalgebra $\CA_\chi(C)$ acts on $V(\ul)$ with a cyclic vector.
    \item For any $ C \in \overline M_{n+1} $, the subalgebra $\CA(C)$ acts on  $\Hom_\fg(V(\mu), V(\ul))$ with a cyclic vector.
\end{enumerate}
\end{thm}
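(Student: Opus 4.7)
The plan is to prove (1) and (2) simultaneously by induction on $n$, using Theorem~\ref{th:operalg} to reduce points in boundary strata to the interior case on smaller moduli spaces. The base cases $n=1$ are immediate: $\overline F_1$ and $\overline M_2$ are single points and $\Hom_\fg(V(\mu), V(\l_1))$ is at most one-dimensional by Schur's lemma.

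For the interior case of statement~(2), i.e.\ $\uz \in M_{n+1}$, one invokes the simple-spectrum theorem of Rybnikov~\cite{R4}: $\CA(\uz)$ acts on $\Hom_\fg(V(\mu), V(\ul))$ with one-dimensional joint eigenspaces at generic $\uz$, hence cyclically; a refinement handles the residual non-generic locus inside $M_{n+1}$ by exploiting the flatness of the family and the maximality of $\CA(\uz)$ as a commutative subalgebra of $((U\fg)^{\otimes n})^\fg$. The interior case of statement~(1) is then deduced by combining the interior case of~(2) with the diagonal action of the Cartan on weight spaces: the scaling relation $\CA_{s\chi}(\uz) = \CA_\chi(s^{-1}\uz)$ from Lemma~\ref{le:InvarianceA}, together with the fact that $\CA_\chi(\uz)$ contains the image of $\fh$ (via evaluation at $\chi$), allows one to separate the weight-space decomposition from the multiplicity-space cyclicity.

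For $C$ in a boundary stratum, the operadic formulas of Theorem~\ref{th:operalg} reduce cyclicity to that on smaller moduli spaces. The case $C = \alpha_k(C_1, C_2)$ is immediate: $\CA_\chi(C) = \CA_\chi(C_1) \otimes \CA_\chi(C_2)$ acts on $V(\ul) = V_1 \otimes V_2$ with cyclic vector the tensor product of the cyclic vectors supplied by induction. The key case is $C = \beta_k(C_1, C_2)$ with $C_1 \in \overline F_{k+1}$ and $C_2 \in \overline M_{n-k+1}$. We apply Peter--Weyl to the last $n-k$ tensor factors:
$$V(\ul) = \bigoplus_\mu V(\l_1) \otimes \cdots \otimes V(\l_k) \otimes V(\mu) \otimes \Hom_\fg\bigl(V(\mu),\, V(\l_{k+1}) \otimes \cdots \otimes V(\l_n)\bigr).$$
On each $\mu$-summand, $\Delta^{n-k}(\CA_\chi(C_1))$ acts on $V(\l_1) \otimes \cdots \otimes V(\l_k) \otimes V(\mu)$ via its original $(k+1)$-fold tensor action, the last slot becoming the diagonal action on $V(\mu)$, while $j_k(\CA(C_2))$ acts on the multiplicity space $\Hom_\fg(V(\mu), V(\l_{k+1}) \otimes \cdots \otimes V(\l_n))$ (since $\CA(C_2)$ is $\fg$-invariant and hence preserves isotypic components). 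By the induction hypothesis --- statement~(1) at $k+1 < n$ and statement~(2) at $n-k \le n-1$ --- each factor has a cyclic vector, so their tensor product is cyclic on each summand, and the direct sum gives a cyclic vector for the whole fibre product algebra on $V(\ul)$. Theorem~\ref{th:operalg}(3) handles $C = \gamma_k(C_1, C_2)$ in statement~(2) identically.

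The main obstacle is the interior simple-spectrum input for~(2), namely Rybnikov's theorem~\cite{R4}, whose proof requires the Feigin--Frenkel--Reshetikhin description of the Gaudin algebra via the centre of $U(\widehat\fg)$ at the critical level and a Bethe-ansatz analysis separating joint eigenvectors. Once this deep result is in place, the rest of the argument is essentially formal: the elementary lemma that $A_1 \otimes A_2$ acts cyclically on $V_1 \otimes V_2$ whenever each $A_i$ does on $V_i$, combined with the operadic description, handles the reduction of the boundary to the interior of smaller moduli spaces.
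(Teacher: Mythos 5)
There is a genuine gap, and it sits exactly where the real content of this theorem lies. First, note that the paper does not prove this statement at all: it is quoted, with part (2) attributed to \cite{R4} and part (1) to \cite[Theorem 8.7]{IKR}, so a blind proof has to supply (or correctly cite) the core of those references. Your operadic reduction of boundary points to interior points of smaller moduli spaces via Theorem \ref{th:operalg} (with $Z_{n-k}$ separating the $\mu$-summands by central characters) is the standard formal part and is fine, up to a small bookkeeping slip: for $\beta_0$ you need statement (2) at level $n$, not $n-k\le n-1$, so the induction must prove (2) before (1) at each $n$. The serious problem is your input for part (1). The claim that the base case is ``immediate'' because $\overline F_1$ is a point is wrong: for $n=1$ the algebra $\CA_\chi(C)$ is the shift-of-argument subalgebra of $U(\fg)$, and its cyclicity on $V(\lambda)$ for regular $\chi$ is a deep theorem of Feigin--Frenkel--Rybnikov \cite{FFRy}, not a triviality. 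Likewise your deduction of the interior case of (1) from (2) cannot work: $\CA_\chi(\uz)$ commutes only with $\fh$, not with $\fg$, so it does not act through the isotypic decomposition $\bigoplus_\nu V(\nu)\otimes\Hom_\fg(V(\nu),V(\ul))$ that underlies (2); the homogeneous algebra arises only as the limit $\chi\to 0$ (equivalently $s\to 0$ in $\CA_{s\chi}(\uz)=\CA_\chi(s\uz)$), that limit algebra is visibly non-cyclic on all of $V(\ul)$, and openness of cyclicity propagates from a cyclic limit to nearby points, not the other way. So nothing in your proposal establishes the inhomogeneous interior (or flower-point) case; this is precisely what \cite{FFRy} and \cite[Thm 8.7]{IKR} provide.

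Second, you mis-calibrate the input for (2). The theorem of \cite{R4} is cyclicity of $\CA(\uz)$ on $\Hom_\fg(V(\mu),V(\ul))$ for \emph{every} $\uz$ with distinct coordinates (that is the Bethe ansatz conjecture; generic cyclicity was known long before), and your proposed ``refinement by flatness and maximality'' to pass from generic $\uz$ to all of $M_{n+1}$ is not a valid step: cyclicity is an open condition but not a closed one, and the image of a flat family of subalgebras in $\End(\Hom_\fg(V(\mu),V(\ul)))$ can drop in dimension at special parameters --- ruling this out is the hard part of \cite{R4}, not a formal consequence of flatness. If you cite \cite{R4} for what it actually proves, the (2)-side of your induction goes through; but part (1) still requires \cite{FFRy} and the arguments of \cite{IKR} as independent ingredients that no amount of operadic bookkeeping can supply.
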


Next, we have the following results from \cite{FFRy}. 
\begin{thm}\label{th:ss-inhomogeneous} 
\begin{enumerate}
    \item 
For any $\chi\in \fh^{split}$ and $C\in \overline{F}_n(\mathbb{R})$, $\CA_{\chi}(C)$ acts semisimply on $V(\ul)$.
\item 
For any $C\in \overline{M}_{n+1}(\BR)$, $\CA(C)$ acts semisimply on $\Hom_\fg(V(\mu), V(\ul))$.
\end{enumerate}
\end{thm}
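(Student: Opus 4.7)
The strategy is to realize the elements of $\CA_\chi(C)$ and $\CA(C)$ as self-adjoint operators with respect to a positive-definite Hermitian form on the relevant finite-dimensional module, so that semisimplicity follows from the spectral theorem.

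I would equip $V(\ul)$ with the tensor product of the standard positive-definite Hermitian Shapovalov forms on the irreducibles $V(\l_i)$. These forms are contravariant for the conjugate-linear Chevalley anti-involution $\omega$ of $U(\fg)$, characterized by $\omega(e_\alpha)=f_\alpha$ and $\omega(h)=h$ for $h\in\fh^{split}$; with this choice, $\omega^{\otimes n}$ on $U(\fg)^{\otimes n}$ implements Hermitian adjunction on $V(\ul)$. The key input is then that $\omega$ lifts to an anti-involution $\widehat\omega$ on the critical-level completion of $U(\widehat\fg_-)$ that preserves the universal Gaudin subalgebra $\CA$.

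Granting this lift, for real parameters $z_1,\ldots,z_n$ and $\chi\in\fh^{split}$ (so that $\omega(\chi)=\chi$), the evaluation homomorphism $\varphi_{w-z_1,\ldots,w-z_n,\chi}$ intertwines $\widehat\omega$ with $\omega^{\otimes n}$, and hence $\omega^{\otimes n}$ preserves $\CA_\chi(\uz)$ for $\uz\in F_n(\BR)$. Consequently every element of $\CA_\chi(\uz)$ acts on $V(\ul)$ as a self-adjoint operator and therefore semisimply. Taking $\chi=0$ and restricting to the positive-definite subspace $\Hom_\fg(V(\mu),V(\ul))$ (which is $\CA(\uz)$-stable, since $\CA(\uz)$ commutes with the diagonal $\fg$-action) gives part~(2) on the interior $M_{n+1}(\BR)$.

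To extend self-adjointness to the boundary of $\overline F_n(\BR)$ and $\overline M_{n+1}(\BR)$, I would proceed by induction on $n$ using the operadic description of Theorem~\ref{th:operalg}. At a boundary point $C$ the subalgebra is obtained from Gaudin subalgebras of lower arity via a tensor product (for $\alpha_k$) or a fibre product over the diagonally embedded centre $Z_{n-k}$ (for $\beta_k$ and $\gamma_k$). Both operations preserve $\omega^{\otimes n}$-stability, because $\omega$ preserves $Z(U\fg)$ and is compatible with the coproduct $\Delta^{n-k}$ and the insertion $j_k$. The inductive step thus yields $\omega^{\otimes n}$-stability of $\CA_\chi(C)$ (resp.\ $\CA(C)$) at every point of the real compactification, whence the semisimplicity conclusion. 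The main obstacle is the opening technical claim, namely that the natural lift $\widehat\omega$ preserves the Feigin--Frenkel centre. One approach is to verify $\widehat\omega$-invariance on an explicit set of generators of $\fz(\widehat\fg)$ such as the Segal--Sugawara vectors $\sum :x^a x_a:$, which are manifestly symmetric in dual bases of $\fg$ and hence $\omega$-invariant; a more conceptual alternative uses the Feigin--Frenkel isomorphism $\fz(\widehat\fg)\cong\Fun(\Op_{\fg^\vee}(D^\times))$ together with the induced action of the Chevalley involution on opers.
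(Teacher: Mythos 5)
The paper itself gives no proof of this theorem — it is quoted from \cite{FFRy} (see also \cite{HKRW} for the boundary points) — and the argument there is essentially the one you outline: for real parameters the Gaudin subalgebra is stable under the conjugate-linear Cartan anti-involution, so its image in $\End(V(\ul))$ is a commutative algebra closed under Hermitian adjoint for the positive-definite contravariant form, hence acts semisimply. A few comments on your version. First, $\omega^{\otimes n}$-stability of the subalgebra only gives that each element acts as a \emph{normal} operator (its adjoint lies in the same commutative algebra); that suffices for simultaneous diagonalizability, but your claim that every element acts self-adjointly is an overstatement unless you separately check that generators are $\omega$-fixed. Second, your ``main obstacle'' dissolves in this paper's setup: $\CA$ is by definition the centralizer in $U(\widehat{\fg}_-)$ of the quadratic element $\sum_a (x_a\otimes t^{-1})(x^a\otimes t^{-1})$, which is visibly fixed by the lift $\widehat\omega$ (dual bases go to dual bases when chosen in the split real form), so $\widehat\omega(\CA)=\CA$ with no need for Segal--Sugawara vectors or opers. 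Third, your operadic induction for the boundary does work, and you are right to run it on $\omega$-stability rather than on semisimplicity (semisimplicity of the factors does not combine directly across $\otimes_{Z_{n-k}}$ in Theorem \ref{th:operalg}); a quicker alternative, and the one used in the cited sources, is that the boundary algebras are limits of interior ones inside a Grassmannian of subspaces of each filtered piece, and invariance under a fixed anti-involution is a closed condition, so it propagates to all of $\overline F_n(\BR)$ and $\overline M_{n+1}(\BR)$. Finally, in part (2) you should identify $\Hom_\fg(V(\mu),V(\ul))$ with the space of singular vectors of weight $\mu$ in $V(\ul)$ — which is preserved by $\CA(\uz)$ and by its adjoints and on which the form is positive definite — since it is not literally a subspace of $V(\ul)$.
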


Combining these results together, we see that if $ \chi \in \fh^{split}$  is regular and $ C \in \overline{F}_n(\mathbb{R})$, then $\CA_{\chi}(C)$ acts with simple spectrum on $V(\ul)$; i.e. it decomposes $ V(\ul) $ into a direct sum of 1-dimensional eigenspaces.  Similarly, if $ C \in \overline M_{n+1}(\BR)$, then $ \CA(C) $ acts with simple spectrum on $\Hom_\fg(V(\mu), V(\ul)) $.

\subsection{Trigonometric Gaudin subalgebras}

Similarly to the inhomogeneous Gaudin subalgebras, one can define the following trigonometric ones responsible for higher integrals of the \emph{trigonometric Gaudin model}. We need the following version of their definition, see \cite{IKR} for the details. Fix $\theta\in\fh^*$.  We define $ \psi_\theta : (U \fg^{\otimes n+1})^\fg \rightarrow U \fg^{\otimes n}$ to be the composite homomorphism:
\[
 (U \fg^{\otimes n+1})^\fg\to (U \fg^{\otimes n+1}/U \fg^{\otimes n+1}\Delta(\fn_+))^{\fn_+}\hookrightarrow U(\fb_-)\otimes U(\fg)^{\otimes n}\to U(\fg)^{\otimes n}, 
\]
with the first arrow being just the projection of the diagonal $\fg$-invariants to the quantum Hamiltonian reduction by the diagonal $\fn_+$, the second is taking the unique representative in the universal enveloping algebra of the Lie subalgebra complement to the diagonal $\fn_+$, and the last arrow is the evaluation of the first tensor factor on the character determined by $\theta$.

For a genus 0 nodal curve $ C \in \overline M_{n+2}$ with $ n+2$ marked points $ z_0, \dots, z_{n+1}$. The trigonometric Gaudin subalgebra $\mathcal{A}^{trig}_\theta(C)\subset U(\fg)^{\otimes n}$ is the image of the homogeneous one $ \mathcal{A}(C) \subset (U \fg^{\otimes n+1})^\fg$; i.e. we define $ \mathcal{A}_{\theta}^{trig}(C) = \psi_{\theta}(\CA(C))$.

In \cite[\S 7]{IKR}, we studied the degeneration of trigonometric Gaudin algebras to inhomogeneous ones.

For each $ (z_1, \dots, z_n; \varepsilon) \in \CF_n $, we defined 
\[
\CA_\chi(z_1, \dots, z_n; \varepsilon) = \begin{cases}\mathcal{A}^{trig}_{\varepsilon^{-1}\chi}(0, 1 - \varepsilon z_1, \dots, 1 - \varepsilon z_n) = \mathcal{A}^{trig}_{\varepsilon^{-1}\chi}(\varepsilon^{-1},  z_1, \dots, z_n) \text{ if $ \varepsilon \ne 0 $} \\
\mathcal{A}_{\chi}(z_1, \dots, z_n) \text{ if $ \varepsilon = 0 $}
\end{cases}
\]

The following result is Theorems 7.1 and 7.11 from \cite{IKR}.

\begin{thm} \label{th:familyCF}
Assume that $ \chi $ is regular.  This defines a family of subalgebras of $ U \fg^{\otimes n} $ parametrized by $  \CF_n $.  Moreover this extends to a family of subalgebras $ \CA_\chi^\varepsilon(C)$ parametrized by $ \overline \CF_n$.
\end{thm}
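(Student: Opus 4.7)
The plan is to establish the two statements in sequence: first, well-definedness of the assignment $(\uz;\varepsilon) \mapsto \CA_\chi(\uz;\varepsilon)$ as a family of subalgebras parametrized by the open variety $\CF_n$, and second, extension to the compactification $\overline{\CF}_n$.

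For the first part, the key point is descent under the group scheme $\BG \to \BA^1$, since $\CF_n = \BG^n \setminus \Delta / \BG$. For $\varepsilon \ne 0$, the $\BG(\varepsilon)$-action corresponds under the identification $\CF_n(\varepsilon) \cong M_{n+2}$ to the $\C^\times$-automorphism of $\BP^1$ fixing $0$ and $\infty$. I would verify that the trigonometric Gaudin subalgebra $\mathcal{A}^{trig}_{\varepsilon^{-1}\chi}(\varepsilon^{-1}, z_1, \dots, z_n)$ is invariant under simultaneous rescaling $z_i \mapsto s z_i$ accompanied by $\theta \mapsto s^{-1}\theta$; this follows by inspection of the defining homomorphism $\psi_\theta$ together with the analogue of Lemma \ref{le:InvarianceA} for homogeneous Gaudin algebras, since rescaling on $\overline M_{n+2}$ is an automorphism and $\psi_\theta$ is equivariant with the indicated character twist. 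Next, I would verify compatibility at the special fibre $\varepsilon = 0$: the claim is that $\mathcal{A}^{trig}_{\varepsilon^{-1}\chi}(\varepsilon^{-1}, z_1, \dots, z_n) \to \CA_\chi(\uz)$ as $\varepsilon \to 0$. Concretely, one expands the quantum Hamiltonian reduction $\psi_{\varepsilon^{-1}\chi}$ applied to $\CA$ in powers of $\varepsilon$ and observes that the leading term exactly reproduces $\varphi_{w-z_1,\dots,w-z_n,\chi}(\CA)$; this is a direct computation using the Feigin-Frenkel-Reshetikhin construction and the fact that the evaluation at $\varepsilon^{-1}$ degenerates, in the $\varepsilon \to 0$ limit, to evaluation at $\infty$ paired with the character~$\chi$.

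For the second part, I would use the existing extensions on each fibre. Over $\BA^1 \setminus \{0\}$, the fibres $\CF_n(\varepsilon) \cong M_{n+2}$ extend to $\overline M_{n+2}$, where trigonometric Gaudin algebras are known to form a family (IKR). Over the special fibre $F_n \subset \overline F_n$, the inhomogeneous Gaudin algebras extend. The strategy is to realize $\CA_\chi^\varepsilon$ as a morphism from $\overline{\CF}_n$ to an appropriate Grassmannian parametrizing subalgebras of $U\fg^{\otimes n}$ of the correct filtered dimensions; since $\overline{\CF}_n$ is projective and the Grassmannian is separated, such a morphism defined on a dense open set extends uniquely if it extends on codimension-one strata. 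The codimension-one strata of $\overline{\CF}_n$ are themselves products of lower-dimensional flower/Deligne-Mumford moduli (or $\BA^1$-deformations thereof), and on each such stratum the subalgebra is a tensor product over a centre subalgebra of lower-rank Gaudin subalgebras, exactly as in Theorem \ref{th:operalg}. Proceeding by induction on $n$ thus reduces the extension to verification of the operadic gluing formulas on boundary strata.

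The principal obstacle is the compatibility across boundary strata of $\overline{\CF}_n$ that connect the two types of generic behaviour: specifically, those divisors obtained by letting a subset of points collide \emph{and} sending $\varepsilon \to 0$ simultaneously. One must check that the two limits commute, i.e.\ that the trigonometric-to-inhomogeneous degeneration commutes with the operadic collision limits. Concretely, this requires showing that if $C \in \overline{\CF}_n$ is a point where both $\varepsilon = 0$ and some $\delta_{ij}$ vanishes (or tends to infinity), then the limit subalgebra computed via the $\overline M_{n+2}$-extension (followed by $\varepsilon\to 0$) agrees with the limit computed via the $\overline F_n$-extension (followed by collision). This reduces, via the explicit stratification of $\overline{\CF}_n$ from \cite{IKLPR}, to checking compatibility on certain two-dimensional cross-ratio coordinate charts, and then to a finite computation in $U\fg^{\otimes 2}$ and $U\fg^{\otimes 3}$; once these base cases are settled, the general statement follows by the operadic structure.
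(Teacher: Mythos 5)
This theorem is not proved in the paper at all: it is quoted verbatim from \cite{IKR} (Theorems 7.1 and 7.11), so your proposal has to stand on its own, and as written it has two genuine gaps.

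First, the extension step rests on the claim that a morphism to a Grassmannian ``defined on a dense open set extends uniquely if it extends on codimension-one strata.'' That is not a valid principle: for a normal source and a projective target a rational map is automatically defined outside a codimension-two locus, but it need not extend across it (the projection $\BA^2 \dashrightarrow \BP^1$, $(x,y)\mapsto[x:y]$, is the standard counterexample). The entire difficulty of Theorem \ref{th:familyCF} is exactly at the deep strata of $\overline\CF_n$: one must show that the limits of the subalgebras along \emph{all} paths into such a stratum coincide. The way this is actually done (in \cite{IKR}, following \cite{Rcactus,HKRW}) is a flatness/Poincar\'e-series argument: one exhibits an explicit candidate limit algebra, shows it is contained in every Grassmannian limit by producing its elements as limits of elements of the generic fibres, and then forces equality because its filtered components have the same dimensions as those of the generic fibre --- a computation that uses regularity of $\chi$ in an essential way (maximality and known size of $\CA_\chi$). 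Your proposal never invokes any such size argument, and indeed the hypothesis that $\chi$ is regular plays no visible role in it, which is a red flag. The same issue already appears in your first part: knowing that leading $\varepsilon$-terms of elements of $\mathcal{A}^{trig}_{\varepsilon^{-1}\chi}(\varepsilon^{-1},\uz)$ reproduce generators of $\CA_\chi(\uz)$ only shows the Grassmannian limit \emph{contains} $\CA_\chi(\uz)$; without the dimension comparison the limit could a priori be strictly larger (cancellations can contribute extra elements), so ``direct computation of leading terms'' does not by itself identify the special fibre.

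Second, the proposed reduction of the remaining compatibilities to ``a finite computation in $U\fg^{\otimes 2}$ and $U\fg^{\otimes 3}$'' via the operadic structure is unjustified. The subalgebras are infinite-dimensional and you invoke no bounded-degree generation statement, so no finite computation can verify equality of two limit subalgebras; moreover the operadic description of the boundary algebras (Theorem \ref{th:operalg}) is itself a statement \emph{about} the limits whose existence and path-independence you are trying to prove, so it cannot be used as an input to an induction that establishes that existence. To repair the argument you would need, for each boundary stratum of $\overline\CF_n$, an explicit description of the candidate limit algebra together with a proof that its Poincar\'e series matches the generic one; that is precisely the content of the cited results in \cite{IKR}.
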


In \cite[Proposition~8.8]{IKR}, we proved the following cyclicity result for the action of these subalgebras.

\begin{thm} \label{th:trig-cyclic}
Fix $\chi$ regular and domianant weights $ \ul, \mu$.  For all but finite many $\varepsilon$, all the trigonometric/inhomogeneous Gaudin subalgebras $\CA_\chi^{\varepsilon}(C)\subset U\fg^{\otimes n}$ (for $ C\in \overline \CF_n$) act with a cyclic vector on the weight space $V(\ul)_\mu$. 
\end{thm}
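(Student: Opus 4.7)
The plan is to reduce the trigonometric case $\varepsilon \ne 0$ to the already-established inhomogeneous case $\varepsilon = 0$ by combining the openness of the cyclic locus in a flat family of commutative subalgebras with the properness of the projection $\pi : \overline{\CF}_n \to \BA^1$.

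First I would record that the central fibre is good. Theorem~\ref{th:cyclic-inhomogeneous}(1) says that for every $C \in \overline F_n = \pi^{-1}(0)$, the algebra $\CA_\chi^0(C) = \CA_\chi(C)$ acts on $V(\ul)$ with a cyclic vector. Since $\CA_\chi(C)$ commutes with the diagonal Cartan $\fh \subset \fg$, it preserves the weight decomposition, and hence the weight components of any cyclic vector are themselves cyclic on their respective weight spaces. Therefore $\CA_\chi^0(C)$ acts cyclically on $V(\ul)_\mu$ for every $C \in \overline F_n$.

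The main step is the openness of the cyclic locus
\[
U := \{ (C,\varepsilon) \in \overline{\CF}_n : \CA_\chi^\varepsilon(C) \text{ acts with a cyclic vector on } V(\ul)_\mu \}.
\]
Let $N = \dim V(\ul)_\mu$. Given $s_0 = (C_0,\varepsilon_0) \in U$, pick a cyclic vector $w_0$ and elements $a_0 = 1, a_1, \ldots, a_{N-1} \in \CA_\chi^{\varepsilon_0}(C_0)$ such that $a_0 w_0, \ldots, a_{N-1} w_0$ is a basis of $V(\ul)_\mu$. By the flatness of the family $\CA_\chi^\varepsilon(C)$ from Theorem~\ref{th:familyCF}, each $a_i$ extends to a local section $\tilde a_i(s) \in \CA_\chi^\varepsilon(C)$ depending regularly on $s$ in a Zariski neighborhood of $s_0$. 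The condition that $\tilde a_0(s) w_0, \ldots, \tilde a_{N-1}(s) w_0$ remain linearly independent is the non-vanishing of a determinant in $s$, which cuts out an open subset of the neighborhood containing $s_0$ and contained in $U$. Hence $U$ is Zariski open and $T := \overline{\CF}_n \setminus U$ is Zariski closed.

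Finally, $\pi : \overline{\CF}_n \to \BA^1$ is proper (as the compactification constructed in \cite[\S 6]{IKLPR}), so $\pi(T)$ is closed in $\BA^1$. The first paragraph shows $0 \notin \pi(T)$, so $\pi(T)$ is a proper closed subset of the affine line and hence finite. This is exactly the assertion of the theorem. The main technical obstacle is the openness lemma; although the argument via determinants is standard in spirit, it requires the flatness of the family of (infinite-dimensional) commutative subalgebras $\CA_\chi^\varepsilon(C)$ over $\overline{\CF}_n$, which is built into the construction via the Feigin--Frenkel center at the critical level used in \cite{IKR}.
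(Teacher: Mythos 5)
Your argument is correct, and it is essentially the expected one: the paper itself does not prove Theorem \ref{th:trig-cyclic} but quotes it from \cite[Proposition~8.8]{IKR}, where the statement is deduced from the cyclicity on the $\varepsilon=0$ fibre (Theorem \ref{th:cyclic-inhomogeneous}) by the same kind of openness-of-the-cyclic-locus plus properness-of-$\overline\CF_n\to\BA^1$ reasoning you give. The only points worth making explicit are that the limit algebras $\CA_\chi^\varepsilon(C)$ commute with the diagonal Cartan (so passing from a cyclic vector on $V(\ul)$ to its $\mu$-component on $V(\ul)_\mu$ is legitimate, also on boundary strata), and that ``family'' in Theorem \ref{th:familyCF} means the filtered pieces form subbundles coming from closures in Grassmannians, which is exactly what licenses your regular local sections $\tilde a_i(s)$; both facts are supplied by \cite{IKR} and \cite{IKLPR}.
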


So once semisimple, such $\CA_\chi^{\varepsilon}(C)$ acts there with a simple spectrum. Next, we have the following statement about the semisimplicity:

\begin{thm}\label{th:trig-semisimple}\cite[Theorems~8.11~and 8.15]{IKR}
The subalgebra $\CA_\chi^{\varepsilon}(C)\subset U\fg^{\otimes n}$ acts semisimply on the weight space $V(\ul)_\mu$ at least in the following two cases:
    \begin{enumerate}
        \item $\chi\in\fh^{split}$ and $C\in \Fs{n}$,
        \item $\chi - \frac{\varepsilon}{2} \mu \in \fh^{split}$ and $C \in \Fc{n}$.
    \end{enumerate}
\end{thm}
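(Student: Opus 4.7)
The plan is to produce on $V(\ul)_\mu$ a positive-definite Hermitian form with respect to which every element of $\CA_\chi^\varepsilon(C)$ acts self-adjointly; semisimplicity then follows at once from the spectral theorem. The natural candidate is (a modification of) the tensor product of the contravariant Shapovalov forms on the $V(\lambda_i)$: these are the unique positive-definite Hermitian forms under which the compact real form of $\fg$ acts unitarily, and they restrict to a positive-definite form on each weight space $V(\ul)_\mu$.

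For Case (1) at $\varepsilon=0$, where $C\in\overline F_n(\BR)$ and the algebra is genuinely inhomogeneous, I would decompose each generator into a ``homogeneous'' Gaudin part and a correction built from $\chi$. The homogeneous part is self-adjoint with respect to the Shapovalov form whenever the Gaudin parameters are real, exactly as in Theorem~\ref{th:ss-inhomogeneous}(2), since a real choice of $w$ makes the evaluation homomorphism $\varphi_{w-z_1,\ldots,w-z_n,\infty}$ intertwine the natural $*$-structures. The correction terms are polynomials in Cartan elements acting in individual tensor factors, and these are self-adjoint precisely when $\chi\in\fh^{split}$, because the anti-involution defining the Shapovalov form fixes split Cartan elements. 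For $\varepsilon\ne 0$ still in Case (1), the trigonometric Gaudin algebra arises by quantum Hamiltonian reduction $\psi_\theta$ from the homogeneous Gaudin algebra of an auxiliary $(n{+}1)$-pointed cactus curve; one pushes the Shapovalov-type form on $\Hom_\fg(V(\mu),V(\ul))$ through this reduction to a positive-definite form on $V(\ul)_\mu$ making the reduced operators self-adjoint, provided $\theta=\varepsilon^{-1}\chi\in\fh^{split}$, i.e.\ $\chi\in\fh^{split}$ since $\varepsilon\in\BR$.

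For Case (2), where $C\in\Fc{n}$ and $\varepsilon\in i\BR$, the same reduction strategy applies, but the Hermitian form must be twisted to compensate for the imaginary $\varepsilon$. Tracking what complex conjugation does to the relevant evaluation homomorphism, and keeping in mind that on the $\mu$-weight space every Cartan element acts by the scalar $\mu$, one finds that the effective Whittaker character appearing in the self-adjointness condition is $\theta$ shifted by $\tfrac12\mu$. The condition $\theta-\tfrac12\mu\in\fh^{split}$ becomes, after multiplying by $\varepsilon$, precisely $\chi-\tfrac{\varepsilon}{2}\mu\in\fh^{split}$. Once semisimplicity is known on an open dense locus, it extends to all of $\Fs{n}$ or $\Fc{n}$ by a standard closure argument: self-adjointness is a closed condition, and the cyclicity provided by Theorem~\ref{th:trig-cyclic} prevents any degeneration of the spectrum to a non-diagonalizable one.

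The main obstacle will be pinning down the correct twisted Hermitian form in the compact case and verifying that the shift is exactly $\tfrac12\mu$ rather than some other weight combination. This demands careful bookkeeping of the signs and powers of $\varepsilon$ produced by complex conjugation across the Hamiltonian reduction by the diagonal $\fn_+$, and a check that these are balanced precisely by the weight grading on $V(\ul)_\mu$. The remaining ingredients---the reduction of semisimplicity to self-adjointness, and the propagation of semisimplicity to boundary points via the extension theorem (Theorem \ref{th:familyCF}) for the family of Gaudin subalgebras---are standard and would not require essentially new input.
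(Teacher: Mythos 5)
First, a point of reference: this theorem is not proved in the paper at all --- it is imported verbatim from \cite[Theorems 8.11 and 8.15]{IKR} --- so the only comparison possible is with the strategy of that cited source, which is indeed the one you outline: exhibit the Gaudin operators as self-adjoint with respect to a positive-definite contravariant (Shapovalov-type) Hermitian form under the stated reality conditions, and propagate semisimplicity over boundary points using the fact that self-adjointness with respect to a fixed form is a closed condition in the family of Theorem \ref{th:familyCF}. That skeleton, including your correct remark that one should use closedness of self-adjointness rather than of diagonalizability, is sound.

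However, two steps in your sketch do not work as written. (i) Your real-form bookkeeping in Case (2) is inconsistent: if $\theta-\tfrac12\mu\in\fh^{split}$, then multiplying by $\varepsilon\in i\BR$ gives $\chi-\tfrac{\varepsilon}{2}\mu\in\fh^{comp}$, not $\fh^{split}$. The correct condition at the level of the trigonometric parameter is $\theta-\tfrac12\mu=\varepsilon^{-1}\chi-\tfrac12\mu\in\fh^{comp}$, exactly as the paper records in the sentence following the theorem; so your chain of equivalences does not close up, and since the whole content of Case (2) is this reality condition, the error is not cosmetic. (ii) The proposed mechanism ``push the Shapovalov-type form on $\Hom_\fg(V(\mu),V(\ul))$ through the reduction $\psi_\theta$'' does not parse: the trigonometric algebra $\CA^{trig}_\theta(C)=\psi_\theta(\CA(C))$ acts on the weight space $V(\ul)_\mu$, and the auxiliary tensor factor eliminated by the reduction is a character of $U(\fb_-)$ (a Whittaker/Verma-type, infinite-dimensional object), not a finite-dimensional $V(\mu)$ carrying a positive-definite contravariant form; there is no form on a multiplicity space to transport. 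Self-adjointness (or normality) must instead be checked directly for the image operators on $V(\ul)_\mu$ against the tensor Shapovalov form, and it is precisely this computation --- how the Cartan anti-involution interacts with the $\fn_+$-reduction and the character $\theta$ on the $\mu$-weight space --- that produces the $\tfrac12\mu$ shift you correctly identify as the crux but do not derive. As it stands the proposal is a plausible outline of the argument in \cite{IKR}, not a proof.
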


For the second statement, we note that $ \varepsilon \in i \R$, so that if $ \varepsilon \ne 0 $, then the condition $\chi - \frac{\varepsilon}{2} \mu \in \fh^{split}$ is equivalent to $ \varepsilon^{-1} \chi - \frac{1}{2} \mu \in \fh^{comp}$, which is the condition that appears in \cite[Thm 8.15]{IKR}.

\subsection{A generalization} \label{se:AGen}
When we work over the compact real form, in order to ensure that the condition $ \chi - \frac{\varepsilon}{2} \mu \in \fh^{split}$ holds, we will need to consider a family of trigonometric Gaudin subalgebras where $ \chi $ depends on $ \varepsilon$.  We will now make this precise.

Let $ \chi \in \fh[\varepsilon] $, a polynomial in $ \varepsilon$ with coefficients from $ \fh $; equivalently $ \chi : \BA^1 \rightarrow \fh$ is a morphism of varieties.  Assume that $ \chi(0) \in \fh^{reg} $ and let $ U = \chi^{-1}(\fh^{reg}) \subset \BA^1$, a Zariski open and hence co-finite subset of $ \BA^1$.  Let $ \overline \CF_n(U) = \varepsilon^{-1}(U)$.  (In other words $  \overline \CF_n(U) = \{ (C; \varepsilon) : \chi(\varepsilon) \in \fh^{reg} \} $.)

The method of proof of Theorem 7.11 from \cite{IKR} generalizes to give the following result.

\begin{thm}
    There exists a family of subalgebras of $ U\fg^{\otimes n} $ parametrized by $\overline \CF_n(U) $ given by $ (C; \varepsilon) \mapsto  \CA_{\chi(\varepsilon)}^{\varepsilon}(C)$.
\end{thm}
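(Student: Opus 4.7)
The plan is to revisit the proof of Theorem 7.11 of \cite{IKR} and observe that the construction adapts directly to the setting where $\chi$ depends polynomially on $\varepsilon$, by replacing the constant parameter $\chi$ with the regular function $\chi(\varepsilon)$ at every step. The original proof constructs, for each fixed regular $\chi \in \fh$, an algebraic family of subalgebras of $U\fg^{\otimes n}$ over $\overline{\CF}_n$ whose fiber at $(C; \varepsilon)$ is $\CA_\chi^\varepsilon(C)$. My approach is first to upgrade this to a family over $\overline{\CF}_n \times \fh^{reg}$ with joint algebraic dependence on $\chi$, and then pull back along the graph of $\chi$.

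For the upgrade, I would verify that each ingredient in the construction of $\CA_\chi^\varepsilon(C)$ depends algebraically on $\chi$. The universal Gaudin subalgebra $\CA \subset U(\widehat{\fg}_-)$ is fixed. For $\varepsilon \ne 0$, the fiber is the trigonometric subalgebra $\CA^{trig}_{\varepsilon^{-1}\chi}(\varepsilon^{-1}, z_1, \dots, z_n)$, obtained by applying the evaluation $\varphi_{\varepsilon^{-1}, z_1, \dots, z_n}$ to $\CA$ followed by the quantum Hamiltonian reduction map $\psi_{\varepsilon^{-1}\chi}$; both steps depend polynomially on their parameters, including $\chi$ and $\varepsilon$, once denominators are cleared as in the original proof. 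For $\varepsilon = 0$, the fiber is $\CA_\chi(z_1, \dots, z_n)$, defined via $\varphi_{w - z_1, \dots, w - z_n, \chi}$, again polynomial in $\chi$. Hence the proof of Theorem 7.11 goes through verbatim with $\chi$ promoted to an additional free variable, yielding a family of subalgebras parametrized by $\overline{\CF}_n \times \fh^{reg}$.

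For the pullback step, I would base-change along the graph morphism $\gamma : \overline{\CF}_n(U) \to \overline{\CF}_n \times \fh^{reg}$ given by $(C; \varepsilon) \mapsto (C; \varepsilon; \chi(\varepsilon))$. This is well-defined by the very definition of $U = \chi^{-1}(\fh^{reg})$, and is a morphism of algebraic varieties because $\chi \in \fh[\varepsilon]$ is polynomial. Pulling back the enhanced family yields the desired family of subalgebras over $\overline{\CF}_n(U)$ with fiber $\CA_{\chi(\varepsilon)}^\varepsilon(C)$ at $(C; \varepsilon)$.

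The main obstacle is the first step: ensuring that the degeneration argument of Theorem 7.11, which glues the trigonometric and inhomogeneous descriptions across the special fiber $\varepsilon = 0$ (where the scalar $\varepsilon^{-1}\chi$ formally blows up but is compensated by the rescaling built into the trigonometric definition), remains algebraic when $\chi$ itself is permitted to vary. Here the hypothesis $\chi(0) \in \fh^{reg}$ is essential: it guarantees that $\chi(\varepsilon)$ stays in $\fh^{reg}$ on a Zariski neighbourhood of $\varepsilon = 0$, so the same Rees-type rescaling that handled constant $\chi$ in the original proof applies pointwise along the graph of $\chi$. Since all manipulations involve only polynomial operations on $\chi$ and $\varepsilon$, the resulting family is algebraic over $\overline{\CF}_n(U)$, as required.
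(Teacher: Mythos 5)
Your proposal is correct and matches the paper's approach: the paper offers no independent argument here, simply asserting that the method of proof of Theorem~7.11 of \cite{IKR} generalizes once $\chi$ is allowed to depend polynomially on $\varepsilon$ and one restricts to the locus $\overline\CF_n(U)$ where $\chi(\varepsilon)$ stays regular, which is exactly what you argue. Your extra packaging (building a joint family over $\overline\CF_n\times\fh^{reg}$ and pulling back along the graph of $\chi$) is a harmless reformulation of the same observation, and you correctly identify both the role of the hypothesis $\chi(0)\in\fh^{reg}$ and the fact that the only delicate point, the behaviour of $\varepsilon^{-1}\chi(\varepsilon)$ at $\varepsilon=0$, is handled by the rescaling already present in the original degeneration argument.
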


To apply this result, fix $ \chi \in \fh^{reg} \cap \fh^{split}$.  For any weight $ \mu$, we define $ \chi(\varepsilon) = \chi + \frac{\varepsilon}{2}\mu$.   This gives us a family of subalgebras $ \CA_{\chi + \frac{\varepsilon}{2}\mu}^\varepsilon(C) $ over $ \overline \CF_n(U) $.  Then from Theorem \ref{th:trig-semisimple}, we deduce the following.

\begin{cor} \label{th:trig-semisimple2}  For any choice $ \l_1, \dots, \l_n $ of dominant weights and for all $C \in \Fc{n}(U) $, the subalgebra $ \CA_{\chi + \frac{\varepsilon}{2}\mu}^\varepsilon(C) $ acts semisimply on $ V(\ul)_\mu$.
\end{cor}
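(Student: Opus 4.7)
The plan is to obtain this corollary as an immediate pointwise application of Theorem~\ref{th:trig-semisimple}(2) to the $\varepsilon$-dependent family $\chi(\varepsilon) := \chi + \frac{\varepsilon}{2}\mu$ constructed via the theorem of Section~\ref{se:AGen}.

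First, I would verify that this polynomial $\chi(\varepsilon)\in\fh[\varepsilon]$ meets the hypothesis of the generalized family construction: since $\chi(0)=\chi\in\fh^{\mathrm{reg}}$ by assumption, the locus $U=\chi^{-1}(\fh^{\mathrm{reg}})\subset\BA^1$ is Zariski-open, cofinite, and contains $0$. Applying the preceding theorem, we obtain a well-defined family of commutative subalgebras $\CA_{\chi+\frac{\varepsilon}{2}\mu}^{\varepsilon}(C)\subset U\fg^{\otimes n}$ parametrized by $\overline{\CF}_n(U)$, which restricts to a family over the compact real locus $\Fc{n}(U)$.

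Next, for a fixed $C\in\Fc{n}(U)$ we wish to apply Theorem~\ref{th:trig-semisimple}(2) to the subalgebra $\CA_{\chi(\varepsilon)}^{\varepsilon}(C)$. That theorem requires the regularity parameter, call it $\chi'$, to satisfy $\chi'-\frac{\varepsilon}{2}\mu\in\fh^{\mathrm{split}}$. Substituting $\chi'=\chi(\varepsilon)=\chi+\frac{\varepsilon}{2}\mu$, the shift cancels exactly and the condition reduces to
\[
\chi(\varepsilon)-\tfrac{\varepsilon}{2}\mu \;=\; \chi \;\in\; \fh^{\mathrm{split}},
\]
which holds by hypothesis, and is now $\varepsilon$-independent. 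Therefore $\CA_{\chi+\frac{\varepsilon}{2}\mu}^{\varepsilon}(C)$ acts semisimply on $V(\ul)_\mu$ for every $C\in\Fc{n}(U)$.

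There is no serious obstacle: the content of the corollary is precisely that the $\varepsilon$-twist of $\chi$ is chosen so as to cancel the $\frac{\varepsilon}{2}\mu$-correction imposed by the compact-form semisimplicity criterion. The only thing one needs to check with any care is that the family construction of Section~\ref{se:AGen} accommodates a polynomial (and not just constant) $\chi$, but this is exactly what is recorded in the theorem immediately preceding the corollary, whose proof is asserted to follow the method of \cite[Theorem~7.11]{IKR}.
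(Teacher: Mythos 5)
Your proposal is correct and matches the paper's own argument: the paper likewise defines $\chi(\varepsilon)=\chi+\frac{\varepsilon}{2}\mu$ for fixed $\chi\in\fh^{reg}\cap\fh^{split}$, invokes the generalized family construction of Section~\ref{se:AGen} over $\overline\CF_n(U)$, and then applies Theorem~\ref{th:trig-semisimple}(2) pointwise, where the shift cancels so the hypothesis reduces to $\chi\in\fh^{split}$. Your remark that the only point needing care is that the family construction allows polynomial (not just constant) $\chi$ is exactly the role of the theorem preceding the corollary.
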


\section{Monodromy results}

\subsection{Compatible coverings from spectra of inhomogeneous Gaudin subalgebras}

Fix a real $\chi\in\fh^{split}$ in the dominant Weyl chamber, i.e. $(\alpha,\chi)>0$ for any positive root $\alpha$.  Let $ \CE_\chi(C,\ul) $ be the set of eigenlines of $ \CA_\chi(C) $ acting on $ V(\ul) $.

The sets $ \CE_\chi(C,\ul) $ form the fibres of a family $ \cE_\chi(\ul) \rightarrow \overline{F}_n(\BR)$, where
$$
\cE_\chi(\ul) = \{ (C, L) : C \in \overline{F}_n(\BR), L \in \PP(V(\ul)) : \text{ $L$ is an eigenline for $ \CA_\chi(C) $} \}
$$
It is easy to see that this is a real algebraic variety. By Theorems \ref{th:cyclic-inhomogeneous} and \ref{th:ss-inhomogeneous}, $ \cE_\chi(\ul) \rightarrow \overline{F}_n(\BR) $ is a covering space.

We let $ \cE_{n} = \cup_{\ul} \cE_\chi(\ul) $, where in the union is taken over $ \ul \in \Lambda_+^{n} $.

Similarly, we consider $ \CA(C) $ acting on $ \Hom_\fg(V(\mu), V(\ul))$ for $ C \in \overline M_{n+1}(\BR)$.  Theorems \ref{th:cyclic-inhomogeneous} and \ref{th:ss-inhomogeneous} imply that $ \CA(C) $ acts with a simple spectrum and so decomposes $  \Hom_\fg(V(\mu), V(\ul)) $ into eigenlines.

Let $ \CX(C,(\ul,\mu)) $ be the set of eigenlines of $ \CA(C) $ acting on $\Hom_\fg(V(\mu), V(\ul)) $.

The sets $ \CX(C, (\ul, \mu)) $ form the fibres of a family $ \CX(\ul, \mu) \rightarrow \overline M_{n+1}(\BR)$, where
$$
\CX(\ul, \mu) = \{ (C, L) : C \in \overline M_{n+1}(\BR), L \in \PP(\Hom_\fg(V(\mu), V(\ul))) : \text{ $L$ is an eigenline for $ \CA(C) $} \}
$$
It is easy to see that this is a real algebraic variety. By Theorems \ref{th:cyclic-inhomogeneous} and \ref{th:ss-inhomogeneous}, $ \CX(\ul, \mu) \rightarrow \overline{M}_{n+1}(\BR) $ is a covering space.

We let $ \CX_{n+1} = \cup_{\ul,\mu} \CX(\ul, \mu) $, where in the union is taken over $ (\ul,\mu) \in \Lambda_+^{n+1} $.

\begin{thm} \label{th:OperadFromEigenlines}
$ \cE_{n}, \CX_{n+1} $ is a $\Lambda_+$-coloured operadic covering of $\overline{F}_n(\BR)$.
\end{thm}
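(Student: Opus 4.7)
The plan is to verify each piece of data required by the definition of a $\Lambda_+$-coloured operadic covering in turn. First I would confirm that $\cE_n \to \overline F_n(\BR) \times \Lambda_+^n$ is a genuine covering space: over the component labelled by $\ul \in \Lambda_+^n$, Theorems \ref{th:cyclic-inhomogeneous}(1) and \ref{th:ss-inhomogeneous}(1), combined with the assumption that $\chi \in \fh^{split}$ is regular dominant, imply that $\CA_\chi(C)$ acts with simple spectrum on $V(\ul)$ for every $C \in \overline F_n(\BR)$. Continuity of the extended family $C \mapsto \CA_\chi(C)$ over the compactification then makes the set of eigenlines into a covering of cardinality $\dim V(\ul)$. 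The analogous statement for $\CX_{n+1} \to \overline M_{n+1}(\BR) \times \Lambda_+^{n+1}$ uses the second parts of those theorems. The $S_n$-equivariance of both coverings is immediate from Lemma \ref{le:ConjugateA}: the permutation of tensor factors by $\sigma \in S_n$ intertwines $\CA_\chi(C)$ with $\CA_\chi(\sigma \cdot C)$, hence carries eigenlines to eigenlines compatibly with the $S_n$-actions on the base and on the colouring.

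The three operadic compatibility bijections $\tilde\alpha_k, \tilde\beta_k, \tilde\gamma_k$ are then built directly from Theorem \ref{th:operalg}. For $\tilde\alpha_k$, part (1) of that theorem gives $\CA_\chi(\alpha_k(C_1, C_2)) = \CA_\chi(C_1) \otimes \CA_\chi(C_2)$ acting on $V(\l_1) \otimes \cdots \otimes V(\l_k) \otimes V(\l_{k+1}) \otimes \cdots \otimes V(\l_n)$, and since both factors act with simple spectrum the joint eigenlines are precisely the external tensor products $L_1 \otimes L_2$; this yields the required bijection on fibres. The bijection $\tilde\gamma_k$ on the $\overline M_{n+1}(\BR)$-side is essentially the content of Theorem 4.13 of \cite{HKRW}, which we are free to reuse.

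For $\tilde\beta_k$, I would combine Theorem \ref{th:operalg}(2) with the multiplicity decomposition
$$ V(\l_{k+1}) \otimes \cdots \otimes V(\l_n) \;=\; \bigoplus_\mu V(\mu) \otimes \Hom_\fg\bigl(V(\mu),\, V(\l_{k+1}) \otimes \cdots \otimes V(\l_n)\bigr), $$
which regroups $V(\ul)$ as
$$ V(\ul) \;=\; \bigoplus_\mu \bigl(V(\l_1) \otimes \cdots \otimes V(\l_k) \otimes V(\mu)\bigr) \otimes \Hom_\fg\bigl(V(\mu),\, V(\l_{k+1}) \otimes \cdots \otimes V(\l_n)\bigr). $$
On each $\mu$-summand, $\Delta^{n-k}(\CA_\chi(C_1))$ acts as $\CA_\chi(C_1)$ on the first tensor factor and trivially on the $\Hom$-space, while $j_k(\CA(C_2))$ acts as $\CA(C_2)$ on the $\Hom$-space and trivially on the first factor; the fibre product over $Z_{n-k}$ encodes the consistency between the two algebras on the central characters indexed by $\mu$. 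Hence eigenlines of $\CA_\chi(\beta_k(C_1, C_2))$ on $V(\ul)$ are enumerated, $\mu$ by $\mu$, by pairs consisting of an eigenline of $\CA_\chi(C_1)$ on $V(\l_1) \otimes \cdots \otimes V(\l_k) \otimes V(\mu)$ and an eigenline of $\CA(C_2)$ on $\Hom_\fg(V(\mu), V(\l_{k+1}) \otimes \cdots \otimes V(\l_n))$; taking the disjoint union over $\mu$ gives exactly the required bijection.

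The main technical obstacle I anticipate is showing that these fibrewise bijections are genuine isomorphisms of covering spaces, i.e.\ continuous in the parameters $(C_1, C_2)$ rather than merely pointwise bijections. This should follow because the multiplicity decomposition of $V(\ul)$ is intrinsic to $\fg$ and independent of $C_1, C_2$, and the algebra identifications in Theorem \ref{th:operalg} are equalities of subalgebras of $U\fg^{\otimes n}$ that hold uniformly over the image of the operadic map; the eigenline labelling is then induced by an algebraic identification of the families of projective schemes of eigenlines. No additional coherence axioms (pentagon- or hexagon-type relations among $\tilde\alpha_k, \tilde\beta_k, \tilde\gamma_k$) need to be verified, since these are not part of the definition of a $\Lambda_+$-coloured operadic covering.
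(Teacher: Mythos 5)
Your proposal is correct and follows essentially the same route as the paper: the covering property and semisimplicity come from Theorems \ref{th:cyclic-inhomogeneous} and \ref{th:ss-inhomogeneous}, $S_n$-equivariance from Lemma \ref{le:ConjugateA}, and the bijections $\tilde\alpha_k, \tilde\beta_k, \tilde\gamma_k$ from the three parts of Theorem \ref{th:operalg} together with the multiplicity decomposition of $V(\ul)$, with no further coherence axioms to check. The only slight difference is that the paper builds $\tilde\gamma_k$ directly from Theorem \ref{th:operalg}(3) in the same way as $\tilde\beta_k$, rather than invoking \cite[Thm 4.13]{HKRW} (which concerns passing from coverings to coboundary categories, not this construction), but this does not affect the argument.
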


\begin{proof}
First note by Lemma \ref{le:ConjugateA}, we have actions of $ S_n $ on $ \CE_n$ and $ \CX_{n+1}$.

We must construct three sets of bijections.  To begin with, suppose that $ C_1 \in \overline F_k, C_2 \in \overline F_{n-k}$.  Then by Theorem \ref{th:operalg}(1), we have $\CA_\chi(\alpha_k(C_1, C_2)) = \CA_\chi(C_1) \otimes \CA_\chi(C_2)$.  Let $ \ul = (\l_1, \dots, \l_n) \in \Lambda_+^n $ and let $ \ul'= (\l_1, \dots, \l_k) $ and $ \ul'' = (\l_{k+1}, \dots, \l_n)$.  Hence we get a bijection between the eigenlines of $\CA_\chi(\alpha_k(C_1, C_2)) $ and $ \CA_\chi(C_1) \otimes \CA_\chi(C_2) $ acting on $ V(\ul) = V(\ul') \otimes V(\ul'')$, which we write as 
$$
\tilde \alpha_k : \CE_\chi(\alpha_k(C_1, C_2), \ul) \rightarrow \CE_\chi(C_1, \ul') \times \CE_\chi(C_2, \ul'')
$$

Now, suppose that $ C_1 \in \overline F_{k+1} $ and $ C_2 \in \overline M_{n-k+1}$.  Then by Theorem \ref{th:operalg}(2), we have $\CA_\chi(\beta_k(C_1, C_2)) = \Delta^{n-k}(\CA_\chi(C_1)) \otimes_{Z_{n-k}} j_k (\CA(C_2))$.  Let $ \l_1, \dots, \l_n, \mu \in \Lambda_+$.  Hence we get a bijection between the eigenlines of these algebras acting on 
$$ V(\ul) = \oplus_\mu V(\l_1) \otimes \dots \otimes V(\l_k) \otimes V(\mu) \otimes \Hom_\fg(V(\mu), V(\l_{k+1}, \dots, \l_n))$$ 
which we write as
$$
\tilde \beta_k : \CE_\chi(\beta_k(C_1, C_2), (\l_1, \dots, \l_n)) \rightarrow \sqcup_\mu \CE_\chi(C_1, (\l_1, \dots, \l_k, \mu)) \times \CX(C_2, (\l_{k+1}, \dots, \l_n, \mu))) 
$$

Similarly, we construct $$ \tilde \gamma_k : \sqcup_\mu \CX(C_1, (\l_1, \dots, \l_k, \mu, \nu)) \times \CX(C_2, (\l_{k+1}, \dots, \l_n, \mu))) \rightarrow \CX(\gamma_k(C_1, C_2), (\l_1, \dots, \l_n, \nu))$$

Together these bijections gives us the structure of a $ \Lambda_+$-coloured operadic covering.
\end{proof}

By the construction from Section \ref{se:FromOpCover}, the operadic covering $ \CE_\chi, \CX $ defines a concrete coboundary category $ \Phi : \cC \rightarrow \set $.  

An \textbf{equivalence of concrete coboundary categories} is a monoidal functor $ \Psi : \cC \rightarrow \cD $ along with an isomorphism of monoidal functors $ \tau : \Phi^{\cD}\circ \Psi \rightarrow \Phi^C $.  This means that we should have an isomorphism (in $ \set$), $ \tau_A : \Phi^D(\Psi(A)) \rightarrow \Phi^{\cC}(A) $ for each $ A \in \cC $ (natural in $ A$) such that for any $ A, B \in \cC$ diagram
	\begin{equation} \label{eq:equivconcrete}
		\begin{tikzcd}
		  \Phi^{\cD}( \Psi(A \otimes B))  \ar[r,"{\Phi^{\cD}(\psi_{A, B})}"] \ar[d,"{\tau_{A \otimes B}}"] & \Phi_D(\Psi(A) \otimes \Psi(B)) \ar[r,"{\phi^{\cD}_{\Psi(A), \Psi(B)}}"]& \Phi^{\cD}(\Psi(A)) \times \Phi^{\cD}(\Psi(B)) \ar[d,"{\tau_A \times \tau_B}"] \\
				\Phi^{\cC}(A \otimes B)  \ar[rr,"{\phi^{\cC}_{A,B}}"] & & \Phi^{\cC}(A) \times \Phi^{\cC}(B) 
		\end{tikzcd}
		\end{equation}
commutes.

We now give a precise statement of our main result.

\begin{thm} \label{th:Main}
There is an equivalence of concrete coboundary categories $ \Psi : \cC \cong \gcrys $ taking $ L(\l) $ to $ B(\l) $ for each $ \l \in \Lambda_+ $.
\end{thm}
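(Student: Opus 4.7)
The plan is to apply Theorem \ref{th:operadic-intro}: it suffices to construct an $S_n$-equivariant isomorphism of $\Lambda_+$-coloured operadic coverings between the Bethe eigenlines covering $(\CE_n, \CX_{n+1})$ from Theorem \ref{th:OperadFromEigenlines} and the operadic covering of $\gcrys$ produced by Proposition \ref{pr:buildcover}, matching the simples $L(\l) \leftrightarrow B(\l)$. The $\CX$-part of this is exactly the main theorem of \cite{HKRW}: for each $C \in \overline M_{n+1}(\BR)$ it provides an $S_n$-equivariant bijection between $\CX(C, (\ul, \mu))$ and the multiplicity set of $B(\mu)$ in $B(\l_1) \otimes \cdots \otimes B(\l_n)$, and this bijection is compatible with the gluings $\tilde \gamma_k$ by Theorem \ref{th:operalg}(3).

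For the $\CE$-part, I would first fix the fibre at the maximal flower point $\infty \in \overline F_n(\BR)$. By iterating $\tilde \alpha_k$ and using Theorem \ref{th:operalg}(1) at each step, the Gaudin algebra factorises as $\CA_\chi(\infty) = \CA_\chi(\infty_1) \otimes \cdots \otimes \CA_\chi(\infty_n)$, with each $\CA_\chi(\infty_i) \subset U\fg$ being the shift-of-argument subalgebra; its eigenlines on $V(\l_i)$ are identified with the underlying set $\CB(\l_i)$ of the crystal by the $n=1$ instance of \cite{HKRW}. This yields a canonical bijection $\CE_\chi(\infty, \ul) \cong \CB(\l_1) \times \cdots \times \CB(\l_n)$. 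Since a connected covering is determined by the operadic structure together with a single fibre, this fixes the required isomorphism of operadic coverings uniquely, modulo checking operadic compatibility at every boundary stratum.

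The heart of the argument is to verify that $\tilde \beta_0$ matches the forgetful functor on the crystal side, which is the content of the concrete structure. At the point $\beta_0(\infty, y) \in \overline F_n(\BR)$, Theorem \ref{th:operalg}(2) gives the factorisation $\CA_\chi(\beta_0(\infty, y)) = \Delta^n(\CA_\chi(\infty_0)) \otimes_{Z_n} j_0(\CA(y))$, which under the weight decomposition $V(\ul) = \bigoplus_\mu V(\mu) \otimes \Hom_\fg(V(\mu), V(\ul))$ acts as shift-of-argument on $V(\mu)$ times homogeneous Gaudin on the multiplicity space. Its joint eigenlines consequently split as $\bigsqcup_\mu \CB(\mu) \times \{\text{multiplicity set of } B(\mu) \text{ in } B(\ul)\}$, which is precisely the underlying set of the crystal tensor product $B(\l_1) \otimes \cdots \otimes B(\l_n)$; this identifies the concrete functor on $\cC$ with the forgetful functor on $\gcrys$. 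The main obstacle is the internal consistency of the two identifications of shift-of-argument eigenlines with the crystal (one at $\infty$, one via the $\beta_0$ degeneration along the boundary divisor), which via the pentagon argument of Proposition \ref{pr:buildcategory} reduces to the quadratic case $n=2$ over $\overline F_2(\BR) \cong \BR\BP^1$, where it can be verified by explicit analysis of the one-parameter family of Gaudin algebras connecting the two strata.
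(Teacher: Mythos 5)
Your overall strategy---produce an isomorphism of $\Lambda_+$-coloured operadic coverings between the Bethe covering of Theorem \ref{th:OperadFromEigenlines} and the covering built from $\gcrys$ via Proposition \ref{pr:buildcover}, then invoke Theorem \ref{th:operadic-intro}---is a reasonable rearrangement of the paper's argument, and the identification of the fibre at the maximal flower point via Theorem \ref{th:operalg}(1) and the one-point case of \cite{HKRW} is fine. But the proposal has a genuine gap precisely at the step you yourself call the heart of the argument. After reducing (as you do) to the two-point case, the claim to be proved is that parallel transport in the family of two-point inhomogeneous Gaudin algebras over $\overline F_2(\BR)$, from the point $\beta_0(\infty,y)$, where the eigenlines are $\sqcup_\mu \CB(\mu)\times (\text{multiplicity set of } B(\mu) \text{ in } B(\l_1)\otimes B(\l_2))$ by Theorem \ref{th:operalg}(2), to the flower point $\alpha_1(\infty,\infty)$, where they are $\CB(\l_1)\times\CB(\l_2)$, is the canonical crystal decomposition bijection; equivalently, that the concretization map $\phi_{\l_1,\l_2}$ of Section \ref{se:FromOpCover} agrees with the monoidal structure of the forgetful functor on $\gcrys$. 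You assert this ``can be verified by explicit analysis of the one-parameter family,'' but give no argument, and it is not a routine computation: it is exactly where the paper's proof does its work, by recognizing that this one-parameter family coincides with the family of \cite[Prop.\ 10.16]{HKRW} used in \cite[\S 14.1]{HKRW} to define the monoidal structure $\psi_{\l_1,\l_2}$ of the equivalence of \cite[Thm.\ 8.7]{HKRW}, so that the needed equality $\phi_{\l_1,\l_2}=\psi_{\l_1,\l_2}$ is inherited from the substantial results of \cite{HKRW} rather than established ab initio. Without either citing that result or actually carrying out the degeneration analysis, the proposal does not prove the theorem.

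Two secondary points. The assertion that ``a connected covering is determined by the operadic structure together with a single fibre'' is false as stated: a covering is determined by its monodromy, and matching the monodromies is exactly what is at stake (note that in the paper Corollary \ref{th:inmonod} is deduced \emph{from} Theorem \ref{th:Main} together with the rigidity statement Proposition \ref{th:CatCoverBack}, not the other way around); what you need is a rigidity argument in the spirit of Proposition \ref{th:CatCoverBack} reducing an isomorphism of operadic coverings to the data listed in (\ref{eq:XiCCob}), i.e.\ to $\CE_1$, $\CE_2$, $\CX_3$ (with its $S_2$-structure) and $\CX_4$. Similarly, the $\CX$-part must be invoked as an isomorphism of coverings of $\overline M_{n+1}(\BR)$ (compatible with parallel transport), not merely as fibrewise bijections compatible with the $\tilde\gamma_k$; this is indeed available from \cite{HKRW}, but should be stated as such.
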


\begin{proof}
In \cite[\S 12.1]{HKRW}, we constructed a $ \fg$-crystal structure on $ \CE_\chi(p,\l)$ and in \cite[Thm 12.3]{HKRW}, we proved that it was isomorphic to $ B(\l) $ as a $ \fg$-crystal.

By \cite[Thm 8.7]{HKRW}, there is an equivalence $ \Psi : \cC \rightarrow \gcrys$ defined by $ \Psi(L(\l)) = \CE_\chi(p, \l) $.  Moreover, the monoidal structure $ \psi $ on this functor was defined in \cite[\S 14.1]{HKRW} using parallel transport.  

Thus, to complete the proof of this theorem, we just need to define the isomorphism $ \tau$ and verify that the diagram (\ref{eq:equivconcrete}) commutes.  By our construction in \S \ref{se:FromOpCover}, for each $ \lambda \in \Lambda_+ $, we have a simple object $ L(\l) \in \cC $.  Moreover, $ \CL(\l) := \Phi^{\cC}(L(\l))$ was defined to be $ \CE_\chi(p,\l)$.  Thus, we define $ \tau_\l : \CE_\chi(p,\l) = \Phi^{\cC}(L(\l)) \rightarrow \Phi^{\gcrys}(\Psi(L(\l)) = \CE_\chi(p,\l) $ to be the identity map.

Examining the diagram (\ref{eq:equivconcrete}) we see that it reduces to showing that for all $ \l_1, \l_2 \in \Lambda_+$, the two maps $ \phi_{\l_1, \l_2} $ and $ \psi_{\l_1, \l_2}$
$$ \sqcup_\mu \CE_\chi(p, \mu) \times \CX(q, (\l_1, \l_2, \mu)) \rightarrow \CE(p, \l_1) \times \CE(p, \l_2))$$
are equal.  Now $ \psi_{\l_1, \l_2} $  is defined in \cite[\S 14.1]{HKRW} using a family of algebras defined in \cite[Prop 10.16]{HKRW}.  But this family is exactly the family of 2-point inhomogeneous Gaudin algebras, as parametrized by $ \overline F_2 $, which gives the cover $ \CE_2$ and therefore is used (according to the construction in \S 4.6) to define $ \phi_{\l_1, \l_2} $.  Hence we see that $ \phi_{\l_1, \l_2} = \psi_{\l_1, \l_2}$ as desired.

\end{proof}



As an application of this theorem, we deduce our monodromy result for inhomogeneous Gaudin algebras.  Let $ \ul \in \Lambda_+^n$.  We have an $S_n$-equivariant cover $ \CE_\chi(\ul) \rightarrow \overline F_n(\BR) $ and thus an action of $ \pi_1^{S_n}(\overline F_n(\BR), \infty) $ on $ \CE_\chi(\infty, \ul) $ (where $ \infty \in \overline F_n $ is the maximal flower point).  By Theorem \ref{th:pi1}, we have an isomorphism $ \pi_1^{S_n}(\overline F_n(\BR), \infty) \cong vC_n$.  On the other hand, we have an action of $ vC_n $ on $ \mathcal B(\ul) $ by Theorem \ref{th:vCnact}.

\begin{cor} \label{th:inmonod}
    There are bijections $ \CE_\chi(\infty, \ul) \cong \mathcal B(\ul)$ for all $ \ul \in \Lambda_+^n$, compatible with the actions of $ vC_n$.
\end{cor}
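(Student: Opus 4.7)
The plan is to deduce the corollary directly from Theorem \ref{th:Main} and the operadic structure of $\CE_\chi$ at the maximal flower point. First, I would note that iterating the gluing map $\alpha_1 : \overline F_1 \times \overline F_{n-1} \rightarrow \overline F_n$ starting from the unique point of $\overline F_1$ produces precisely the maximal flower point $\infty \in \overline F_n$ (a bouquet of $n$ petals attached at the distinguished point). Applying the operadic compatibility bijections $\tilde\alpha_k$ from Theorem \ref{th:OperadFromEigenlines} iteratively then yields a canonical identification
$$
\CE_\chi(\infty, \ul) \;\cong\; \CE_\chi(\infty, \l_1) \times \cdots \times \CE_\chi(\infty, \l_n) \;=\; \CL(\l_1) \times \cdots \times \CL(\l_n),
$$
where the last equality is the very definition of $\CL$ from Section \ref{se:FromOpCover}. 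Invoking the equivalence $\Psi : \cC \xrightarrow{\sim} \gcrys$ of Theorem \ref{th:Main}, which sends $L(\l)$ to $B(\l)$ and hence identifies $\CL(\l)$ with $\mathcal B(\l)$, produces the desired bijection $\CE_\chi(\infty, \ul) \cong \mathcal B(\ul)$.

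For the $vC_n$-equivariance, the idea is to recognize both actions as instances of Theorem \ref{th:vCnact} applied to the concrete coboundary category $\cC$. By Proposition \ref{th:CatCoverBack}, the operadic cover reconstructed from $\cC$ is canonically isomorphic to $(\CE_n, \CX_{n+1})$, so the monodromy $vC_n$-action on $\CE_\chi(\infty, \ul)$ coming from the covering $\CE_n \rightarrow \overline F_n(\BR) \times \Lambda_+^n$ and the isomorphism $\pi_1^{S_n}(\overline F_n(\BR), \infty) \cong vC_n$ of Theorem \ref{th:pi1}(2) coincides with the abstract $vC_n$-action on $\CL(\l_1) \times \cdots \times \CL(\l_n)$ provided by Theorem \ref{th:vCnact}. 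Finally, Theorem \ref{th:Main} transports this action to the $vC_n$-action on $\mathcal B(\l_1) \times \cdots \times \mathcal B(\l_n)$ in which $S_n$ acts by naive permutations and $C_n$ by crystal commutors, which is precisely the action produced by Theorem \ref{th:vCnact} on $\gcrys$.

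The principal obstacle is verifying that, inside Proposition \ref{th:CatCoverBack}, the geometric monodromy $vC_n$-action really agrees with the abstract one from Theorem \ref{th:vCnact}. For the $S_n$-subgroup this is transparent, since in both pictures $S_n$ acts by relabeling of the marked points. For the $C_n$-subgroup the argument relies on Theorem \ref{th:pi12}(2): the generators $s_{ij}$ arise from loops that pass through $\overline M_{n+1}(\BR) \subset \overline F_n(\BR)$, and along this subvariety the cover $\CE_n$ restricts via $\tilde\beta_0$ to $\CX_{n+1}$. Consequently the $C_n$-part of the monodromy is encoded by the cover $\CX_{n+1}$, which is precisely what was used in Section \ref{se:FromOpCover} to build the commutor in $\cC$ and which becomes the crystal commutor under $\Psi$ by Theorem \ref{th:Main}.
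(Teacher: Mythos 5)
Your proposal is correct and follows essentially the same route as the paper: it combines Theorem \ref{th:Main} with Proposition \ref{th:CatCoverBack} (and the construction relating concrete coboundary categories to operadic coverings, i.e.\ Theorem \ref{th:vCnact}) to identify the monodromy of the eigenline cover over $\infty$ with the $vC_n$-action on $\mathcal B(\l_1)\times\cdots\times\mathcal B(\l_n)$. The extra details you supply (the iterated $\tilde\alpha$ identification of the fibre over the maximal flower point, and the $S_n$/$C_n$ consistency check via Theorem \ref{th:pi12}(2) and $\tilde\beta_0$) are correct but are already subsumed in Proposition \ref{th:CatCoverBack}, since an isomorphism of $S_n$-equivariant coverings automatically intertwines the monodromy actions.
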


\begin{proof}
    Let $ \CE'_n$ be the cover produced from $ \gcrys$ by the construction in section \ref{se:FromCat}.  Combining Theorem \ref{th:Main} and Proposition \ref{th:CatCoverBack}, we see that the cover $ \CE'_n $ is isomorphic to our original cover $ \sqcup_{\ul} \CE_\chi(\infty, \ul) $.  Taking monodromy implies the desired result.
\end{proof}

\subsection{Monodromy of trigonometric Gaudin model, split case}
We continue with the same fixed $ \chi \in \fh^{split}$ and fix $ \ul $ a sequence of dominant weights.  Let us write $ \varepsilon : \Fs{n} \rightarrow \R $ for the projection defined by the coordinate $ \varepsilon$.

By Theorem \ref{th:trig-cyclic}, there exists $ c > 0 $, such that $ \CA_\chi(C) $ acts cyclically on $ V(\ul) $ for all $ C \in \Fs{n}(-c, c) := \varepsilon^{-1}(-c, c)$.  And by Theorem \ref{th:trig-semisimple}, it also acts semisimply.

For any $ C \in \Fs{n}(-c, c) $, let $ \CE_\chi^{split}(C, \ul) $ denote the set of eigenlines for the action of $ \CA_\chi^\varepsilon(C)$ on $ V(\ul) $.  This gives an unbranched $S_n$-equivariant covering $ \cup_\ul \CE_\chi^{split}(\ul)$ of $ \Fs{n}(-c, c)$.

\begin{lem}
    Let $ \varepsilon : X \rightarrow \R $ be a variety with a map to the real line.  Assume also that there is an action of $ \Rx $ on $ X $ such that $ \varepsilon$ is $\Rx$-equivariant (with respect to its usual action on $ \R$).  Let $ c> 0 $.  There is a homeomorphism $ \varepsilon^{-1}(-c,c)  \rightarrow X  $ which is the identity on the $ \varepsilon = 0 $ fibre.
\end{lem}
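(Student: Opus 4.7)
The plan is to construct the homeomorphism by using the $\Rx$-action to radially stretch the $\varepsilon$-direction. First I would fix an orientation-preserving homeomorphism $h \colon (-c,c) \to \R$ with $h(0)=0$ and $h(t)/t \to 1$ as $t \to 0$; an explicit choice is $h(t) = t/(1 - t^2/c^2)$. Its inverse $h^{-1}$ has the same properties.

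Next, define $\Psi \colon \varepsilon^{-1}(-c,c) \to X$ by
\[
\Psi(x) \;=\; \begin{cases} \dfrac{h(\varepsilon(x))}{\varepsilon(x)}\cdot x & \text{if } \varepsilon(x)\ne 0,\\[4pt] x & \text{if } \varepsilon(x)=0.\end{cases}
\]
Since $\varepsilon$ is $\Rx$-equivariant, $\varepsilon(\Psi(x)) = h(\varepsilon(x))$, so $\Psi$ carries the fibre over $t$ bijectively onto the fibre over $h(t)$. A two-sided inverse is given by the same formula with $h^{-1}$ in place of $h$, so $\Psi$ is a bijection that fixes the fibre $\varepsilon^{-1}(0)$ pointwise by construction. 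On the open subset $\{\varepsilon \ne 0\}$ both $\Psi$ and $\Psi^{-1}$ are continuous, being compositions of $\varepsilon$, $h$, division, and the action map.

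The one delicate point — the main obstacle, such as it is — is continuity of $\Psi$ and $\Psi^{-1}$ along the fibre $\varepsilon^{-1}(0)$. For any sequence $x_n \to x_\infty$ with $\varepsilon(x_\infty)=0$, we have $\varepsilon(x_n) \to 0$, and the choice of $h$ gives $h(\varepsilon(x_n))/\varepsilon(x_n) \to 1$ (for those $n$ with $\varepsilon(x_n)\ne 0$; the other $n$ pose no issue since $\Psi(x_n)=x_n$). Joint continuity of the action map $\Rx \times X \to X$, applied at $(1, x_\infty)$, then yields $\Psi(x_n) \to 1 \cdot x_\infty = x_\infty = \Psi(x_\infty)$. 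The identical argument with $h^{-1}$ in place of $h$ shows continuity of $\Psi^{-1}$ at the zero fibre. Hence $\Psi$ is a homeomorphism with the required property.
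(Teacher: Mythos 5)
Your proof is correct and follows essentially the same route as the paper: both rescale along the $\Rx$-action by the factor $h(\varepsilon(x))/\varepsilon(x)$ for a homeomorphism $h:(-c,c)\to\R$ fixing $0$ with $h(t)/t\to 1$, the paper taking $f(t)=\tfrac{c}{\pi}\tan\tfrac{\pi t}{c}$ where you take $t/(1-t^2/c^2)$. You merely spell out the continuity across the $\varepsilon=0$ fibre (which the paper leaves implicit, and which with your $h$ is even immediate since $h(t)/t=1/(1-t^2/c^2)$ extends continuously and nonvanishingly over $t=0$), so there is nothing to change.
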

\begin{proof}
    Let $ f : (-c,c) \rightarrow \R $ be the diffeomorphism defined by $ f(t) = \frac{c}{\pi} \tan \frac{ \pi t}{c} $.  Define $ F : \varepsilon^{-1}(-c,c)  \rightarrow X $ by $ F(x) = \frac{f(\varepsilon(x))}{\varepsilon(x)} \cdot x $.  Since $ f $ is a diffeomorphism and $ f'(0) = 1 $, this has the desired properties.
\end{proof}

By \cite[Rem. 7.7]{IKLPR}, this Lemma applies to $ \Fs{n} $ and so we deduce the following.
\begin{cor} \label{co:Expand}
    There is a homeomorphism $ \Fs{n}(-c,c) \cong \Fs{n}$.
\end{cor}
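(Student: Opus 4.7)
The plan is to apply the preceding lemma directly, with $X = \Fs{n}$ and the same constant $c$. Thus the essential task is to exhibit an $\Rx$-action on $\Fs{n}$ for which the coordinate $\varepsilon : \Fs{n} \to \R$ is equivariant (where $\Rx$ acts on $\R$ in the standard way by multiplication). Once this action is in place, the hypotheses of the lemma are satisfied and the homeomorphism $\Fs{n}(-c,c) \cong \Fs{n}$ (which is moreover the identity on the $\varepsilon = 0$ fiber $\Fr{n}$) follows immediately.

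To construct the $\Rx$-action, I would work first on the open locus $\CF_n = \BG^n \setminus \Delta / \BG$ and define
$$
s \cdot (x_1, \dots, x_n; \varepsilon) := (s^{-1} x_1, \dots, s^{-1} x_n; s \varepsilon) \qquad (s \in \Rx).
$$
The defining condition $1 - \varepsilon x_i \ne 0 $ is invariant under this assignment since $1 - (s\varepsilon)(s^{-1} x_i) = 1 - \varepsilon x_i$, and the rescaling is compatible with the $\BG$-quotient, so this descends to an $\Rx$-action on $\CF_n$. A direct calculation shows that the coordinates transform by $\delta_{ij} \mapsto s^{-1} \delta_{ij}$, so the action extends to the compactification $\overline \CF_n$ and restricts to the split real form $\Fs{n}$. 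By construction, $\varepsilon(s \cdot p) = s \, \varepsilon(p)$, which is precisely the required equivariance. This $\Rx$-action is the one recorded in \cite[Rem. 7.7]{IKLPR}.

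With this action in hand, the lemma supplies the homeomorphism $\Fs{n}(-c, c) \to \Fs{n}$ defined by $p \mapsto \tfrac{f(\varepsilon(p))}{\varepsilon(p)} \cdot p$, where $f(t) = \tfrac{c}{\pi} \tan \tfrac{\pi t}{c}$; the ratio extends continuously across $\varepsilon = 0$ because $f'(0) = 1$, and the map is the identity on $\Fr{n}$. There is essentially no obstacle here beyond verifying the extension of the $\Rx$-action to the compactification, which is a routine consequence of the explicit coordinate description and is handled by the cited remark.
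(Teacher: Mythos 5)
Your proposal is correct and follows the paper's own route: the paper likewise deduces the corollary by applying the preceding lemma to $\Fs{n}$, citing \cite[Rem.\ 7.7]{IKLPR} for the scaling $\Rx$-action making $\varepsilon$ equivariant, which is exactly the action $(x_1,\dots,x_n;\varepsilon)\mapsto(s^{-1}x_1,\dots,s^{-1}x_n;s\varepsilon)$ you write out explicitly (and your checks that it preserves the locus, descends through the $\BG$-quotient, scales $\delta_{ij}$ by $s^{-1}$, and extends to the compactification and its split real form are the content of that remark).
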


Via this homeomorphism, we will regard $\CE_\chi^{split}(\ul)$ as a covering of $ \Fs{n}$.  Our next monodromy result concerns the monodromy of this covering.  Recall the basepoint $ \tilde y \in \Ms{n+2} = \Fs{n}(1)$ from section \ref{se:Splitpi1}.  Lifting the path $ p^\infty_{\tilde y}$, we get a bijection between the fibre $ \CE_\chi^{split}(\tilde y,\ul)$ and the fibre $ \CE_\chi(\infty, \ul)$, which by Corollary \ref{th:inmonod} we identify with $ \mathcal B(\ul)$.

Recall from section \ref{se:Splitpi1}, the fundamental group $ MC_{n} = \pi_1^{S_n}(\Ms{n+2}, \tilde y)$ has generators $ t_1, \dots, t_{n-1}$ and $ s_{ij} $ for $ 1 \le i < j \le n$.  By restricting the covering $ \CE_\chi^{split}(\ul) $ to $ \Ms{n+2} $ (in other words, to the eigenlines for trigonometric Gaudin algebras), we get a monodromy action of $ MC_n $ on $ \CE_\chi^{split}(\tilde y,\ul)  $.

\begin{thm} \label{th:MonodSplit}
    Under the above bijection $ \CE_\chi^{split}(\tilde y,\ul) = \mathcal B(\ul) $, the monodromy action of $\wt{C}_n$ is given by the following:
    \begin{enumerate}
        \item $t_i$ act as the elementary transposition of factors in $\mathcal B(\lambda_1)\times\ldots\times \mathcal B(\lambda_n)$;
        \item $s_{ij}$ acts as the corresponding generator of $C_n$ on the tensor product of crystals $\mathcal B(\lambda_1)\otimes\ldots\otimes \mathcal B(\lambda_n)=\mathcal B(\lambda_1)\times\ldots\times \mathcal B(\lambda_n)$. 
    \end{enumerate}
\end{thm}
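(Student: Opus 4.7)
The strategy is to identify the covering $\CE_\chi^{split}(\ul)\to \Fs{n}$ with the inhomogeneous Gaudin covering $\CE_\chi(\ul)\to \Fr{n}$ via the homotopy equivalence of Theorem \ref{th:homotopy}, and then transport the already-known monodromy of $vC_n$ to $MC_n$ along the map provided by Theorem \ref{th:pi1split}.

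First, by the very definition of the family $\CA_\chi^\varepsilon(C)$ preceding Theorem \ref{th:familyCF}, specializing at $\varepsilon=0$ gives the inhomogeneous Gaudin algebra $\CA_\chi(C)$, so the restriction of the covering $\CE_\chi^{split}(\ul)\to \Fs{n}(-c,c)$ to the special fibre $\Fs{n}(0)=\Fr{n}$ is precisely $\CE_\chi(\ul)$. Using Corollary \ref{co:Expand}, we view $\CE_\chi^{split}(\ul)$ as an $S_n$-equivariant covering of all of $\Fs{n}$ whose restriction to $\Fr{n}$ remains $\CE_\chi(\ul)$. Since the inclusion $\Fr{n}\hookrightarrow \Fs{n}$ is an $S_n$-equivariant homotopy equivalence by Theorem \ref{th:homotopy}, this covering is $S_n$-equivariantly isomorphic to the pullback of the inhomogeneous cover, so its monodromy is completely controlled by the monodromy of $\CE_\chi(\ul)\to\Fr{n}$.

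Second, lifting the path $p^\infty_{\tilde y}$ constructed in Section \ref{se:Splitpi1} produces a bijection $\CE_\chi^{split}(\tilde y,\ul)\cong\CE_\chi(\infty,\ul)$, and Corollary \ref{th:inmonod} identifies the right-hand side with $\mathcal B(\ul)$ in such a way that the monodromy of $vC_n\cong \pi_1^{S_n}(\Fs{n},\infty)$ acts via crystal commutors on the cactus subgroup $C_n\subset vC_n$ and via naive permutations on $S_n\subset vC_n$. Now Theorem \ref{th:pi1split} tells us exactly that the composition $MC_n\cong \pi_1^{S_n}(\Ms{n+2},\tilde y)\xrightarrow{\widehat{p_{\tilde y}^\infty}} \pi_1^{S_n}(\Fs{n},\infty)\cong vC_n$ sends each $s_{ij}$ (with $1\le i<j\le n$) to the namesake generator of $vC_n$ and each $t_i$ to the elementary transposition $w_i\in S_n\subset vC_n$. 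Composing these three steps yields both claims at once: $s_{ij}$ acts by the cactus commutor on the tensor product of crystals $\mathcal B(\lambda_1)\otimes\cdots\otimes\mathcal B(\lambda_n)$, while $t_i$ acts by transposition of the $i$-th and $(i{+}1)$-st factors of $\mathcal B(\lambda_1)\times\cdots\times\mathcal B(\lambda_n)$.

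The main obstacle is the first step: one must verify that the covering $\CE_\chi^{split}(\ul)$, a priori defined only on the neighbourhood $\Fs{n}(-c,c)$ of $\Fr{n}$, extends canonically and $S_n$-equivariantly to a covering of $\Fs{n}$ whose restriction to the fibre $\Ms{n+2}=\Fs{n}(1)$ is interpreted correctly as (monodromy-equivalent to) the trigonometric Gaudin eigenline cover at the basepoint $\tilde y$. Once one trusts the equivariant homeomorphism of Corollary \ref{co:Expand} together with the homotopy equivalence of Theorem \ref{th:homotopy}, the remainder of the argument is a straightforward composition of previously established monodromy computations.
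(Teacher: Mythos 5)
Your proposal is correct and follows essentially the same route as the paper's proof: restrict the cover to $\Ms{n+2}=\Fs{n}(1)$ so the monodromy factors through $\widehat{p_{\tilde y}^{\infty}}$, use Corollary \ref{co:Expand} and the deformation retraction of Theorem \ref{th:homotopy} to reduce the monodromy over $\Fs{n}$ to that of the inhomogeneous cover over $\Fr{n}$, and conclude by combining Theorem \ref{th:pi1split} with Corollary \ref{th:inmonod}.
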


\begin{proof}
Since the cover of $ \Ms{n+2}$ is obtained by restricting the cover of $ \Fs{n}$, and because we used $ p^\infty_{\tilde y} $ to identify the fibres, the action of $  \pi_1^{S_n}(\Ms{n+2}, \tilde y)$ factors through the map $ \widehat p^{\infty}_{\tilde y} $ which was studied in Theorem \ref{th:pi1split}.

Moreover, since $\Fr{n}$ is a deformation retract of $\Fs{n}$, the monodromy of the cover over $ \Fs{n}$ is the same as the monodromy over $ \Fr{n}$.  The result now follows by combining Theorem \ref{th:pi1split} and Corollary \ref{th:inmonod}.
\end{proof}

\subsection{Compact case} 
We continue with same fixed $ \chi $.  Also fix a weight $ \mu$ and a sequence of dominant weights $ \ul$.  

Note that $ \chi + \frac{\varepsilon}{2} \mu $ is regular for all $ \varepsilon \in i \R $.  Hence the family of subalgebras $ \CA^{\varepsilon}_{\chi + \frac{\varepsilon}{2} \mu}(C) $ from section \ref{se:AGen} is well-defined for all $ C \in \Fc{n}$.

As in the previous section, by Theorem \ref{th:trig-cyclic} we can find $ c > 0 $ such that $ \CA^{\varepsilon}_{\chi + \frac{\varepsilon}{2} \mu}(C)$ acts cyclically on $ V(\ul)_\mu $ for all $ C \in \Fc{n}(-c,c)$ (where $\Fc{n}(-c,c) = \varepsilon^{-1}(-ci,ci) $ under $ \varepsilon : \Fc{n} \rightarrow i\BR$). By Theorem \ref{th:trig-semisimple2}, it acts semisimply on $ V(\ul)_\mu$.

For $ C \in \Fc{n}(-c,c)$, we define $ \CE^{comp}_\chi(C, \ul)_\mu $ to be the set of eigenlines for the action of $ \CA^{\varepsilon}_{\chi + \frac{\varepsilon}{2} \mu}(C)$ on $ V(\ul)_\mu$.  As before this gives us an unbranched $ S_n$-equivariant covering $ \CE_\chi(\ul)_\mu $ of $ \Fc{n}(-c,c)$.  Then we define $ \CE_\chi(\ul) = \sqcup \CE_\chi(\ul)_\mu$.

As in Corollary \ref{co:Expand}, we have a homeomorphism $\Fc{n}(-c,c) \cong \Fc{n}$.  So we will regard $ \CE^{comp}_\chi(\ul) $ as a covering of $ \Fc{n}$.  We can restrict this covering to $ \varepsilon = i $, which gives us a covering over $ \Mc{n+2}$ and thus an action of $ \pi_1^{S_n}(\Mc{n+2}, u) $ on $ \CE^{comp}_\chi(u,\ul) $ where $u \in \Mc{n+2}$ is our chosen basepoint.  We will now describe this monodromy action, keeping in mind Theorem \ref{th:AC}, which gives us an isomorphism $ \widetilde{AC}_n \cong  \pi_1^{S_n}(\Mc{n+2}, u)$.

Similarly to the split case, we index the fiber of $\CE_\chi^{comp}(\ul)$ over the basepoint $ u $ by the direct product of crystals $\mathcal B(\lambda_1)\times\ldots\times \mathcal B(\lambda_n)$ by using parallel transport along the path $ p_u^\infty $ from section \ref{se:Comppi1}.  As in the proof of Theorem \ref{th:MonodSplit}, we combine Theorem~\ref{th:AC} and Theorem \ref{th:inmonod}, to obtain the following.

\begin{thm} \label{th:MonodComp}
    Under the above bijection $ \CE_\chi^{comp}(u,\ul) = \mathcal B(\ul)$, the monodromy action of $ \widetilde{AC}_n $ is given as follows: 
    \begin{enumerate}
        \item The generator $r$ acts by the cyclic permutation of factors in $\mathcal B(\lambda_1)\times\ldots\times \mathcal B(\lambda_n)$ \\
        \item For $1\le i<j\le n$,
    $s_{ij}$ acts as the corresponding generator of $C_n$ on the tensor product of crystals $ B(\lambda_1)\otimes\ldots\otimes  B(\lambda_n)=\mathcal B(\lambda_1)\times\ldots\times \mathcal B(\lambda_n)$. 
    \end{enumerate}
\end{thm}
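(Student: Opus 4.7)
The plan is to follow the same two-step strategy that worked for the split case in Theorem \ref{th:MonodSplit}: first reduce to the monodromy action on the $\varepsilon = 0$ fibre $\Fr{n}$, where everything is controlled by Corollary \ref{th:inmonod}, and then identify the resulting group homomorphism using Theorem \ref{th:AC}.

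The first step is to observe that at $\varepsilon = 0$, the parameter $\chi + \frac{\varepsilon}{2}\mu$ is just $\chi$, so the covering $\CE_\chi^{comp}(\ul)$ restricted to $\Fr{n} = \Fc{n}(0)$ coincides with the inhomogeneous Gaudin covering $\CE_\chi(\ul)$ from the previous subsection (the $\mu$-grading on $\CE_\chi^{comp}(\ul)$ matches the weight-grading on $V(\ul)$ preserved by the diagonal Cartan action). Using the homeomorphism from Corollary \ref{co:Expand}, I would regard $\CE_\chi^{comp}(\ul)$ as an unbranched $S_n$-equivariant cover of all of $\Fc{n}$. The homotopy equivalence $\Fr{n} \subset \Fc{n}$ from Theorem \ref{th:homotopy} then induces an isomorphism $\pi_1^{S_n}(\Fr{n},\infty) \cong \pi_1^{S_n}(\Fc{n},\infty) \cong vC_n$, so the monodromy of $\CE_\chi^{comp}(\ul)$ over $\Fc{n}$ is completely determined by its monodromy over $\Fr{n}$, which by Corollary \ref{th:inmonod} identifies the fibre over $\infty$ with $\mathcal B(\ul)$ in such a way that $s_{ij} \in vC_n$ acts by crystal commutors and $S_n \subset vC_n$ acts by naive permutations of factors.

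The second step is to transfer this to the basepoint $u \in \Mc{n+2} \cong \Fc{n}(i)$. The path $p_u^\infty$ chosen in Section \ref{se:Comppi1} gives us a bijection between $\CE_\chi^{comp}(u,\ul)$ and $\CE_\chi^{comp}(\infty, \ul) = \CE_\chi(\infty, \ul) \cong \mathcal B(\ul)$. The monodromy action of $\pi_1^{S_n}(\Mc{n+2}, u)$ on $\CE_\chi^{comp}(u,\ul)$ then factors through the homomorphism $\widehat p_u^\infty : \pi_1^{S_n}(\Mc{n+2}, u) \to \pi_1^{S_n}(\Fc{n}, \infty)$, and by Theorem \ref{th:AC} this homomorphism is precisely the map $\widetilde{AC}_n \to vC_n$ that sends each generator $s_{ij}$ (for $1 \le i < j \le n$) to the same-named element of $vC_n$ and sends $r$ to the long cycle $(1\, 2\, \ldots\, n) \in S_n \subset vC_n$. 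Combining these two descriptions immediately yields the two assertions: $s_{ij}$ acts by the crystal commutor, and $r$ acts by cyclic permutation of the factors of $\mathcal B(\lambda_1) \times \cdots \times \mathcal B(\lambda_n)$.

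I do not expect a real obstacle here: all the substantive technical work has already been done in the setup. The only point that requires a little care is checking that the identification of the fibre at $u$ with $\mathcal B(\ul)$ uses the \emph{same} parallel transport convention along $p_u^\infty$ that intertwines the two monodromy descriptions; but this is a matter of definitions, precisely parallel to the role played by $p_{\tilde y}^\infty$ in the proof of Theorem \ref{th:MonodSplit}. The compact-case analog of the split-case homotopy argument (contracting $p_u^\infty$ against $\CU_{[[n]]}(\BR)$) is not required because, unlike the generators $t_i$ in the mirabolic case, both $r$ and $s_{ij}$ are already accounted for by the homomorphism of Theorem \ref{th:AC}.
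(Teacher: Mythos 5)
Your proposal is correct and matches the paper's own (very brief) argument: the paper likewise identifies $\CE_\chi^{comp}(u,\ul)$ with $\CE_\chi(\infty,\ul)\cong\mathcal B(\ul)$ by parallel transport along $p_u^\infty$ and then combines Theorem \ref{th:AC} with Corollary \ref{th:inmonod}, exactly as in the split case. Your additional remarks (the $\varepsilon=0$ specialization of $\chi+\frac{\varepsilon}{2}\mu$, and the fact that no analogue of the $t_i$-contraction is needed since Theorem \ref{th:AC} already accounts for both $r$ and $s_{ij}$) are consistent with the paper's setup.
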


\bigskip

\noindent\footnotesize{{\bf Joel Kamnitzer} \\
Department of Mathematics and Statistics, McGill University, Montreal QC, Canada \\
{\tt joel.kamnitzer@mcgill.ca}} \\

\noindent\footnotesize{{\bf Leonid Rybnikov} \\
Department of Mathematics and Statistics,
University of Montreal, Montreal QC, Canada\\
{\tt leonid.rybnikov@umontreal.ca}}

\end{document}